\documentclass[aihp]{imsart}
\usepackage{comment}
\RequirePackage{amsthm,amsmath,amsfonts,amssymb}
\RequirePackage[numbers]{natbib}
\RequirePackage{booktabs,tabularx,array,longtable}
\RequirePackage{enumitem,aliascnt}
\RequirePackage{microtype}
\RequirePackage[hidelinks]{hyperref}
\RequirePackage[nameinlink,capitalise]{cleveref}
\RequirePackage{graphicx}
\RequirePackage{float}
\RequirePackage{tikz}
\usetikzlibrary{arrows.meta,positioning,calc,fit,backgrounds}

\startlocaldefs
\numberwithin{equation}{section}
\theoremstyle{plain}
\newtheorem{theorem}{Theorem}[section]
\newaliascnt{proposition}{theorem}
\newtheorem{proposition}[proposition]{Proposition}
\aliascntresetthe{proposition}
\newaliascnt{lemma}{theorem}
\newtheorem{lemma}[lemma]{Lemma}
\aliascntresetthe{lemma}
\newaliascnt{corollary}{theorem}
\newtheorem{corollary}[corollary]{Corollary}
\aliascntresetthe{corollary}
\theoremstyle{definition}
\newaliascnt{assumption}{theorem}

\aliascntresetthe{assumption}
\newaliascnt{definition}{theorem}

\aliascntresetthe{definition}
\newaliascnt{remark}{theorem}
\newtheorem{remark}[remark]{Remark}
\aliascntresetthe{remark}

\crefname{assumption}{Assumption}{Assumptions}
\Crefname{assumption}{Assumption}{Assumptions}
\crefname{definition}{Definition}{Definitions}
\Crefname{definition}{Definition}{Definitions}
\crefname{theorem}{Theorem}{Theorems}
\Crefname{theorem}{Theorem}{Theorems}
\crefname{proposition}{Proposition}{Propositions}
\Crefname{proposition}{Proposition}{Propositions}
\crefname{lemma}{Lemma}{Lemmas}
\Crefname{lemma}{Lemma}{Lemmas}
\crefname{corollary}{Corollary}{Corollaries}
\Crefname{corollary}{Corollary}{Corollaries}
\crefname{remark}{Remark}{Remarks}
\Crefname{remark}{Remark}{Remarks}

\newcommand{\R}{\mathbb R}
\newcommand{\cM}{\mathcal M}
\newcommand{\supp}{\operatorname{supp}}
\newcommand{\Lip}{\operatorname{Lip}}
\newcommand{\diag}{\operatorname{diag}}

\newcommand{\TV}{\mathrm{TV}}
\newcommand{\1}{\mathbf 1}
\newcommand{\norm}[1]{\left\lVert #1\right\rVert}
\newcommand{\abs}[1]{\left\lvert #1\right\rvert}
\providecommand{\kwdgroup}[2][]{#2}
\newcolumntype{Y}{>{\raggedright\arraybackslash}X}
\newcolumntype{L}[1]{>{\raggedright\arraybackslash}p{#1}}
\allowdisplaybreaks

\definecolor{proofink}{RGB}{40,44,52}
\definecolor{proofmuted}{RGB}{88,92,101}
\definecolor{proofblue}{RGB}{42,91,154}
\definecolor{proofbluebg}{RGB}{239,245,253}
\definecolor{proofgreen}{RGB}{47,122,91}
\definecolor{proofgreenbg}{RGB}{240,248,244}
\definecolor{proofpurple}{RGB}{102,82,145}
\definecolor{proofpurplebg}{RGB}{245,241,250}
\definecolor{proofgold}{RGB}{153,113,38}
\definecolor{proofgoldbg}{RGB}{253,248,236}
\definecolor{proofred}{RGB}{157,69,59}
\definecolor{proofredbg}{RGB}{253,242,240}
\definecolor{proofgraybg}{RGB}{248,249,251}
\endlocaldefs

\usepackage{tikz}
\usepackage{amsmath,amssymb,mathtools}
\usetikzlibrary{arrows.meta,positioning,calc,fit,backgrounds}

\definecolor{ink}{RGB}{40,44,52}
\definecolor{muted}{RGB}{88,92,101}
\definecolor{blue}{RGB}{42,91,154}
\definecolor{bluebg}{RGB}{239,245,253}
\definecolor{green}{RGB}{47,122,91}
\definecolor{greenbg}{RGB}{240,248,244}
\definecolor{purple}{RGB}{102,82,145}
\definecolor{purplebg}{RGB}{245,241,250}
\definecolor{gold}{RGB}{153,113,38}
\definecolor{goldbg}{RGB}{253,248,236}
\definecolor{red}{RGB}{157,69,59}
\definecolor{redbg}{RGB}{253,242,240}
\definecolor{graybg}{RGB}{248,249,251}
\begin{document}
\begin{frontmatter}
\title{\vspace{-0.6in}An AI-Assisted Solution to the Signed BAR Conjecture: Uniqueness in the Harrison--Reiman Class and a Completely-$\mathcal{S}$ Class Obstruction}
\runtitle{Signed BAR uniqueness conjecture}
\runauthor{Y. Lu and Y. Zhu}

\begin{aug}
\author[A]{\fnms{Yiping}~\snm{Lu}}
\author[A]{\fnms{Youheng}~\snm{Zhu}}
\address[A]{Department of Industrial Engineering and Management Sciences,
McCormick School of Engineering, Northwestern University.}
\end{aug}

\begin{abstract}
For a multidimensional reflected diffusion, determining whether the associated basic adjoint relationship (BAR) uniquely characterizes the stationary distribution is a basic uniqueness problem in the BAR approach. The problem has remained unresolved for more than 35 years since the introduction of the BAR approach. In this paper, we resolve the finite-signed uniqueness problem for stable Harrison--Reiman data with a nonsingular $M$-matrix reflection matrix. The proof uses pathwise differentiability of the reflected diffusion implies feasible directional differentiability of the probabilistic resolvent to show that, at boundary points, its one-sided initial-state derivative factors through the tangent projection and vanishes along active reflection directions. An interior one-sided convolution then yields smooth test functions whose oblique derivatives are uniformly bounded and converge pointwise to zero on each closed face. The interior signed measure is consequently invariant for the reflected semigroup. A Jordan-decomposition argument identifies it as a scalar multiple of the unique invariant probability, and an induction over boundary strata, using invertibility of the principal reflection blocks, identifies the boundary measures. The proof was discovered with the assistance of ChatGPT 5.5 Pro and subsequently verified by the authors.

We also show that the nonsingular $M$-matrix assumption is structural. In the larger completely-$\mathcal{S}$ class, a nonsingular reflection matrix with a singular proper principal block admits boundary gauges supported on lower-dimensional strata. Under standard exponential ergodicity and a mild one-step regulator bound, these gauges produce nonzero zero-mass signed BAR tuples; indeed the zero-mass interior BAR coordinates contain an infinite-dimensional subspace. A four-parameter three-dimensional family, including an explicit rational example, verifies the obstruction. Thus the finite signed version of the Dai--Dieker question has a positive answer in the Harrison--Reiman $M$-matrix class and a negative answer in a natural completely-$\mathcal{S}$ extension.
\end{abstract}

\begin{keyword}[class=MSC]
\kwdgroup[type=primary]{\kwd{60J60}\kwd{60J55}}
\kwdgroup[type=secondary]{\kwd{35J25}\kwd{46A20}\kwd{60K25}}
\end{keyword}

\begin{keyword}
\kwd{semimartingale reflected Brownian motion}
\kwd{basic adjoint relationship}
\kwd{signed measure}
\kwd{Skorokhod map}
\kwd{pathwise derivative}
\kwd{resolvent}
\kwd{completely S matrix}
\end{keyword}
\end{frontmatter}

\section{Introduction}

Semimartingale reflected Brownian motions (SRBMs) in the nonnegative orthant are diffusion approximations for stochastic networks in heavy traffic.  In the interior of the orthant the process behaves as a Brownian motion with drift and covariance matrix; when it reaches a face, it is pushed back into the state space in an oblique direction prescribed by the corresponding column of a reflection matrix.  The Harrison--Reiman construction \cite{HarrisonReiman1981,HarrisonReiman1981Distribution} is the canonical orthant model behind open queueing networks in heavy traffic \cite{Reiman1984,harrison1987brownian,HarrisonNguyen1993,Harrison1985,Williams1995}; it is the main positive setting of this paper.

A central analytic object for such reflected diffusions is the basic adjoint relationship (BAR).  It appears in the early stationary analysis and product-form theory for RBM/SRBM \cite{HarrisonReiman1981Distribution,harrison1987multidimensional,harrison1987brownian}, underlies numerical methods for orthant SRBMs \cite{DaiHarrison1991,DaiHarrison1992}, has been used in steady-state heavy-traffic approximation through the BAR approach \cite{BravermanDaiMiyazawa2017,BravermanDaiMiyazawa2025}, and is one of the standard weak formulations used to characterize stationary distributions of reflected diffusions \cite{DaiDieker2011,KangRamanan2014}. If $\pi$ is an interior measure and $\nu_i$ is a boundary measure on the face $F_i=\{x_i=0\}$, the BAR has the form
\begin{equation}\label{eq:intro-BAR}
 \int_E Lf\,d\pi+
 \sum_{i=1}^d\int_{F_i}D_i f\,d\nu_i=0,
 \qquad f\in C_b^2(E),
\end{equation}
where $L$ is the interior diffusion generator and $D_i$ is the directional derivative in the $i$th reflection direction.  The stationary distribution $\pi_0$, together with its stationary boundary occupation measures $\nu_i^0$, satisfies \eqref{eq:intro-BAR}.  The basic uniqueness question is whether the converse holds:
\begin{center}
    \emph{does a BAR solution necessarily have interior part equal to the stationary distribution? }
\end{center}

The issue has persisted for more than three decades, remaining an open problem since the inception of the BAR approach. The open problem was first stated as a conjecture in  \cite{DaiHarrison1991} for SRBMs in a two dimensional rectangle and in \cite{DaiHarrison1992} for SRBMs in a $d$-dimensional orthant. Dai and Dieker \cite{DaiDieker2011} describe the fundamental open problem concerning the Basic Adjoint Relationship (BAR) for multidimensional diffusion processes. Specifically, for both Semimartingale Reflecting Brownian Motions (SRBMs) and piecewise Ornstein–Uhlenbeck (OU) processes. Dai and Dieker \cite[Proposition~1 and Open Problem~1]{DaiDieker2011} formulated the BAR characterization with bounded $C^2$ tests, proved the corresponding characterization in the positive-measure setting, and asked for the signed analogue.  The compactly supported $C^2$ formulation leads to the same finite-signed uniqueness problem.  The bounded-test identity immediately implies the compactly supported one.  Conversely, let $f\in C_b^2(E)$ and choose $\chi_n\in C_c^\infty(\mathbb R^d)$ with $0\le\chi_n\le1$, $\chi_n=1$ on $\{|x|\le n\}$, and $\|\nabla\chi_n\|_\infty+\|D^2\chi_n\|_\infty\to0$.  Applying the compactly supported identity to $\chi_n f$ and expanding $L(\chi_n f)$ and $D_i(\chi_n f)$ gives the bounded-test identity after passage to the limit, because $\chi_n\to1$ pointwise and all error terms are uniformly bounded by constants times $\|\nabla\chi_n\|_\infty+\|D^2\chi_n\|_\infty$ against finite signed measures.  Throughout the paper we therefore use the bounded-test class $C_b^2(E)$, which is the formulation needed to insert the one-sided smoothings of the probabilistic resolvent without an artificial spatial cutoff.

In the signed problem one allows $\pi$ and the $\nu_i$ to be finite signed measures.  The question then becomes linear: is every finite signed BAR tuple a scalar multiple of the stationary tuple?  This signed formulation is more delicate than the positive one.  Positive recurrence identifies invariant probabilities, but the BAR permits cancellation between signed interior and boundary terms.  Moreover, the natural functions that identify invariant measures are probabilistic resolvents, which are not classical BAR tests at the corners.

\subsection*{Related work}

\paragraph{BAR characterization of stationary probabilities} As shown in the the original BAR calculations for SRBMs \cite{harrison1987multidimensional,harrison1987brownian},  positive-measure BAR characterizations identify stationary probabilities, and in many formulations also the associated boundary occupation measures, once the reflected diffusion and its stationary regime are already well posed  \cite{DaiHarrison1991,DaiHarrison1992,KangRamanan2014}.   These results do not, by themselves, exclude sign-changing finite measures whose interior and boundary terms cancel in the BAR.  Our positive theorem addresses exactly that finite-signed nullspace question in the stable Harrison--Reiman nonsingular-$M$-matrix class, and it identifies the full boundary tuple as well as the interior coordinate.

Much of the stationary SRBM literature concerns explicit formulas, transforms, asymptotics, or numerical computation rather than signed uniqueness.  Product-form and skew-symmetry results originate with Harrison and Williams \cite{harrison1987multidimensional}; numerical and approximation methods based on the BAR go back at least to Dai and Harrison \cite{DaiHarrison1991,DaiHarrison1992} and continue in the steady-state heavy-traffic BAR approach for queueing networks \cite{BravermanDaiMiyazawa2017,BravermanDaiMiyazawa2025}; two-dimensional and wedge analyses have been developed through sum-of-exponentials, geometric, and boundary-value/functional-equation methods \cite{DiekerMoriarty2009,DaiMiyazawa2011,DaiMiyazawa2013,FranceschiRaschel2017,FranceschiRaschel2019}.  The present proof uses none of these explicit analytic representations.  Its role is instead structural: it proves that, in the stated $M$-matrix class, the finite signed BAR has no hidden zero-mass directions.

\paragraph{Skorokhod-map Differentiability} Lipschitz, convex-duality and differentiability properties of oblique reflection maps were developed in deterministic form by Dupuis--Ishii, Dupuis--Ramanan, Mandelbaum--Ramanan, and Lipshutz--Ramanan \cite{DupuisIshii1991,DupuisRamanan1999I,DupuisRamanan1999II,MandelbaumRamanan2010,LipshutzRamanan2018}.  We use the reflected-diffusion version of this theory, namely the pathwise differentiability and sensitivity results of Lipshutz and Ramanan \cite{LipshutzRamanan2019,LipshutzRamanan2021}, only after verifying their assumptions for the normalized Harrison--Reiman data.  The negative result is complementary to the existence and stability literature for completely-$\mathcal{S}$ data: Taylor--Williams and Dai--Williams give the relevant SRBM existence frameworks \cite{TaylorWilliams1993,DaiWilliams1995}, while Lyapunov and recurrence criteria for SRBMs are developed for example in \cite{DupuisWilliams1994,BramsonDaiHarrison2010,Sarantsev2017}.  Section~\ref{sec:sharpness} shows that existence and recurrence alone do not replace invertibility of every active principal block.

\subsection*{Technical Overview}

Our positive result answers the signed Dai--Dieker problem for stable Harrison--Reiman data with a nonsingular $M$-matrix reflection matrix.  The proof is organized around a resolvent invariant identity.  Let $R_\lambda h=\int_0^\infty e^{-\lambda t}P_t h\,dt$ be the probabilistic resolvent of the reflected semigroup.  Our core contribution is proving the fact that every finite signed BAR tuple satisfies
\begin{equation}\label{eq:intro-RI}
\tag{RI}
 \int_E(\lambda R_\lambda h-h)\,d\bar\pi=0,
 \qquad h\in C_0(E),\quad \lambda>0.
\end{equation}
This identity says exactly that the interior signed measure is invariant under the reflected semigroup.  Indeed, using $R_\lambda h=\int_0^\infty e^{-\lambda t}P_th\,dt$, \eqref{eq:intro-RI} says that the Laplace transform of $t\mapsto\bar\pi(P_th)-\bar\pi(h)$ vanishes for every $h\in C_0(E)$.  Strong continuity of the Feller semigroup upgrades this to $\bar\pi P_t=\bar\pi$ for all $t\ge0$.  If $\bar\pi=\bar\pi^+-\bar\pi^-$ is the Jordan decomposition, positivity of the Markov kernel gives $|\bar\pi P_t|\le |\bar\pi|P_t$; equality of total masses then makes $|\bar\pi|$ invariant, and hence both Jordan components are invariant positive finite measures.  After normalization, every nonzero component is an invariant probability, so uniqueness of the invariant probability gives $\bar\pi=c\pi_0$.  Subtracting $c$ times the stationary BAR leaves a pure boundary identity, and the nonsingular principal reflection blocks identify the boundary measures by an induction over strata.

The only nontrivial point in this chain is the derivation of \eqref{eq:intro-RI}.  Formally, if $g=R_\lambda h$ were an admissible $C_b^2$ test satisfying $D_i g=0$ on $F_i$, then \eqref{eq:intro-RI} would follow by inserting $g$ into the BAR and using $(\lambda-L)g=h$.  This formal argument is misleading because at corners the resolvent need not be a classical $C^2$ function on the closed orthant; \cref{app:resolvent-not-C2} gives a stable Harrison--Reiman example where such $C^2$ regularity is impossible.  The proof therefore works in the topology actually seen by finite signed measures: uniform convergence of the interior equation and vanishing of the boundary terms after integration against arbitrary finite signed boundary measures.

The approximation used in the proof is intentionally simple.  We do not insert
$g=R_\lambda h$ itself into the BAR.  Instead we replace it by the one-sided
smoothing
\[
        g_\varepsilon(x)=\int \rho(w)g(x+\varepsilon w)\,dw .
\]
The mollifier is supported strictly inside the positive orthant, so the value of
$g_\varepsilon(x)$ only uses values of $g$ at interior points $x+\varepsilon w$.
This smoothing supplies the required bounded $C^2$ regularity for each fixed
$\varepsilon$.  The only delicate point is to show that these legitimate
$C_b^2$ tests have asymptotically zero boundary contribution.  The projected
boundary derivative of the resolvent gives
\[
        D_i g_\varepsilon(x)\longrightarrow 0,\qquad x\in F_i,
\]
with a uniform bound sufficient for dominated convergence against an arbitrary
finite signed boundary measure.  Thus the functions $g_\varepsilon$ approximate
the resolvent in exactly the topology seen by the BAR: the interior equation
converges to $(\lambda-L)R_\lambda h=h$, while all boundary terms vanish.

The paper also explains why the $M$-matrix hypothesis is not merely a proof artifact.  In the completely-$\mathcal{S}$ existence class, a singular proper principal block may cancel all active normal components of a boundary gauge supported on a lower-dimensional stratum.  The remaining tangential derivative produces a centered interior source.  Under a quantitative recurrence assumption, the zero potential of this source gives a nonzero signed BAR tuple with zero interior mass.  Thus signed uniqueness fails in a natural completely-$\mathcal{S}$ extension.

\subsection*{The Role of AI-assistance} The proof given here was not produced by an AI system in a single pass; it is the outcome of an extended, human-directed collaboration (for 3 weeks) in which large language models served as an exploratory and organizational aid, while every mathematical decision and all verification rested with the authors. By shifting the focus from merely verifying the conjecture to characterizing the specific domain where it holds, this study not only reveals the essential divergence between Harrison-Reiman Class and Completely-$\mathcal{S}$ Class but also demonstrates the vital role of human-AI collaboration in advancing complex mathematical research. Following the program in Dai and Dieker's open-problem note \cite{DaiDieker2011}, we first attacked uniqueness in the completely-$\mathcal{S}$  class, where the crux is the low regularity of the solution at the boundary. Over many rounds of interaction the model carried out the boundary-layer expansion and tested whether the boundary contribution is sign-definite and whether it can be absorbed by the interior solution. When this cancellation repeatedly failed for $d>3$, the authors chose to abandon the direct route and to construct a counterexample in the singular regime; the construction presented here is our own, and it delimits the regime in which signed uniqueness can be expected. We then turned to signed-measure uniqueness in the Harrison–Reiman class. Our first attempt proceeded through a Kato-type inequality, where the obstruction is the boundary term produced by the integration by parts; to organize the inductive cancellation of this term across the boundary strata, we prompted the model to adopt a homological-algebra–style bookkeeping. This yielded a long (roughly 150-page, see \url{https://drive.google.com/file/d/1QEMTMYR9d0l3ToJtdVHEeYT9TF5Cudui/view?usp=sharing}) proof outline that passed an initial screening by an ensemble of ten independent model/agent reviewers. Such consensus is not a proof, and we treated it only as a filter: the argument was subsequently checked by the authors, conclusion by conclusion, with each regularity hypothesis verified for mutual consistency. In the course of this verification the model surfaced the pathwise-differentiability results of Lipshutz and Ramanan \cite{LipshutzRamanan2018}, which considerably simplified the argument and, after further iteration, produced the proof in its present form. The authors have verified every step and are solely responsible for the correctness of the results. Additionally, we attempted to generate a positive proof via one-shot prompting, leveraging the premise that the conjecture holds true within the Harrison-Reiman class. However, both ChatGPT 5.5 Pro-extended and Claude Opus 4.8 max failed this task. The chat logs are available at: \url{https://chatgpt.com/share/6a44a502-d034-83ea-9608-eecb9ecc898d} and \url{https://claude.ai/share/25a16238-360a-4649-935f-b23b4ec500ff}(Attempts \url{https://chatgpt.com/share/6a44b084-91dc-83ea-8fc2-49b06770025d} to solve the problem, even when prompted with the literature \cite{LipshutzRamanan2018, LipshutzRamanan2019}, proved unsuccessful.). Surprisingly, contemporary AI approaches even fail to leverage the specific properties of the Harrison–Reiman class, which are essential for the proof of positivity established via the counterexample in the general Completely-$\mathcal{S}$ class presented in this paper. We hypothesize that the AI derived meaningful insights from the first 150 pages version of computations, even though these results were not explicitly incorporated into the final proof. This outcome highlights the potential of AI assistance in tackling open mathematical problems, while simultaneously underscoring the indispensable role of human verification and guidance throughout the process.
\subsection*{Organization of the Paper} 
We organize the paper as follows: \Cref{sec:setting} states the SRBM and BAR setting, states the main theorem, and reduces the proof to the
resolvent identity \eqref{eq:intro-RI}.  \Cref{sec:resolvent-regularity}
establishes the two technical properties of $g=R_\lambda h$ needed later for the approximation:
the interior resolvent equation and the projected boundary derivative that will
make $D_i g_\varepsilon$ vanish on $F_i$.  \Cref{sec:smoothing} carries out the
one-sided smoothing construction, inserts $g_\varepsilon\in C_b^2(E)$ directly
into the BAR, and proves \eqref{eq:intro-RI}.  \Cref{sec:completion}
proves the implication deferred in \Cref{sec:setting}: the identity \eqref{eq:intro-RI} implies the signed BAR uniqueness conjecture, thus finishing the proof of the main theorem.  \Cref{sec:sharpness} explains why the nonsingular $M$-matrix condition
is structural by giving the completely-$\mathcal{S}$ obstruction and an explicit
three-dimensional family.  \Cref{sec:unified-understanding} repackages the positive and negative arguments through a common BAR homotopy lemma and separates the remaining issue into local boundary algebra.

\section{Setting, main theorem, and reduction to the resolvent identity}\label{sec:setting}

This section fixes the data, states the signed-measure theorem, and isolates the central reduction. The conversion of the present standing assumptions into the hypotheses of the reflected-diffusion results is carried out inline, at the point of use, inside the proof of \cref{thm:regularity-SRBM}: there each source hypothesis is recalled in the present orthant specialization and verified.

\subsection{Notation and standing conventions}\label{sec:notation}

Let $J=\{1,\ldots,d\}$, $E=\mathbb R_+^d$, and $E^\circ=(0,\infty)^d$.  For $i\in J$ write $ F_i=\{x\in E:x_i=0\}.$ For nonempty $A\subset J$, define the relative boundary stratum $ S_A=\{x\in E:x_i=0\ (i\in A),\ x_j>0\ (j\notin A)\}.$ The sets $S_A$ form a disjoint Borel decomposition of $\partial E$.

For a locally compact space $B$, $C_0(B)$ denotes the continuous real-valued functions vanishing at infinity, and $\cM(B)$ denotes the finite signed Radon measures on $B$.  For $\eta\in\cM(B)$, $|\eta|$ is its total variation measure and $\|\eta\|_{\TV}=|\eta|(B)$.  We write $\supp\eta$ for the support of a measure and $\supp f$ for the support of a function.  The symbol $\mathbf 1_B$ denotes the indicator of a set $B$.

We use the closed-domain $C^2$ convention.  Thus $C^2(E)$ consists of functions $f:E\to\R$ such that $f\in C^2(E^\circ)$ and all partial derivatives $\partial^\alpha f$, $|\alpha|\le2$, extend continuously from $E^\circ$ to $E$.  The class $C_c^2(E)$ consists of the functions in $C^2(E)$ with compact support as a subset of $E$.  The class $C_b^2(E)$ consists of the functions in $C^2(E)$ for which $f$, $\nabla f$ and $D^2f$ are bounded.  Since $E$ is the orthant, this closed-domain convention is equivalent to saying that every $f\in C^2(E)$ is the restriction to $E$ of some $F\in C^2(U)$ on an open neighborhood $U\supset E$.  For open subsets of Euclidean space, $C_c^\infty$ has its usual meaning. For the reflected semigroup we write $P_t h(x)=\mathbb E[h(Z_t^x)]$ and $ R_\lambda h(x)=\int_0^\infty e^{-\lambda t}P_t h(x)\,dt,\; \lambda>0,$ whenever the integral is finite.
We call the semigroup $P_t$ Feller if $(P_t)_{t\ge 0}$ satisfies $P_tC_0(E)\subset C_0(E)$, and is strongly continuous, i.e. $\|P_t h- h\|_\infty\to 0$ as $t\downarrow 0$ for all $h\in C_0(E)$.

\subsection{SRBM, BAR, and finite signed BAR tuples}

A semimartingale reflected Brownian motion in $E$ is specified by a drift vector $\mu\in\mathbb R^d$, a symmetric positive definite covariance matrix $\Sigma$, and a reflection matrix $R=(R_1,\ldots,R_d)$ whose $i$th column is the direction of reflection on $F_i$. Put $Q=\Sigma/2$ and
\[
 Lf=\mu\cdot\nabla f+Q:D^2f,
 \qquad
 D_i f=R_i\cdot\nabla f.
\]

\phantomsection\label{ass:HR}
Throughout the positive part of the paper we work under the following stable nonsingular $M$-matrix data. The covariance matrix $\Sigma$ is symmetric positive definite. The reflection matrix $R$ satisfies
\begin{equation}\label{eq:Mmatrix}
 R_{ii}>0,\qquad R_{ij}\le0\ (i\ne j),\qquad R^{-1}\ge0.
\end{equation}
The drift satisfies
\begin{equation}\label{eq:stability}
 R^{-1}\mu<0
\end{equation}
componentwise.
The phrase ``stable'' in this paper means exactly \eqref{eq:stability}. The linear-algebra consequences of \eqref{eq:Mmatrix} are proved in \cref{lem:Mmatrix}; the stochastic consequences used later are stated in \cref{thm:regularity-SRBM} and justified in its proof, where every source hypothesis is recalled and checked.

\phantomsection\label{def:signed-BAR}

A finite signed BAR tuple is a tuple $(\bar\pi,\bar\nu_1,\ldots,\bar\nu_d)\in\cM(E)\times\prod_{i=1}^d\cM(F_i)$ of finite signed Radon measures satisfying
\begin{equation}\label{eq:signed-BAR}
 \int_E Lf\,d\bar\pi+
 \sum_{i=1}^d\int_{F_i}D_i f\,d\bar\nu_i=0,
 \qquad f\in C_b^2(E).
\end{equation}
The stationary regulator defines finite boundary occupation measures $\nu_i^0$, and the stationary BAR is
\begin{equation}\label{eq:intro-stationary-BAR}
 \int_E Lf\,d\pi_0+\sum_{i=1}^d\int_{F_i}D_i f\,d\nu_i^0=0,
 \qquad f\in C_b^2(E).
\end{equation}

Under \eqref{eq:Mmatrix}--\eqref{eq:stability}, the normalized reflection matrix
is of Harrison--Reiman form, and the associated deterministic Skorokhod problem
drains to the origin.  Hence \cite[Theorem~2.6]{DupuisWilliams1994} and \cite[Theorem~3.5]{LipshutzRamanan2021} gives provides the existences and the uniqueness of stationary distribution \(\pi_0\) to the SRBM. Obviously, the stationary distribution and the finite stationary boundary measure characterized by the following \cref{prop:stationary-BAR} together provide a solution to the BAR equation \eqref{eq:intro-stationary-BAR}.

\begin{proposition}[Finite stationary boundary measures and stationary BAR]\label{prop:stationary-BAR}
Start the SRBM with $Z_0\sim\pi_0$ and write it in the original normalization as
\begin{equation}\label{eq:SRBM-decomposition}
 Z_t=Z_0+\mu t+\Sigma^{1/2}W_t+RY_t,
\end{equation}
where each $Y_i$ is continuous, nondecreasing, starts from zero, and increases only on $F_i$. Define, for Borel $B\subset F_i$,
\begin{equation}\label{eq:stationary-boundary-measure}
 \nu_i^0(B)=\mathbb E_{\pi_0}\int_0^1\mathbf 1_B(Z_s)\,dY_i(s).
\end{equation}
Then each $\nu_i^0$ is a finite measure supported on $F_i$, and \eqref{eq:intro-stationary-BAR} holds.
\end{proposition}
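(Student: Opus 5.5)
We will apply It\^o's formula to $f(Z_t)$ for $f\in C_b^2(E)$ under the stationary start, take expectations over $[0,1]$, and use stationarity to kill the left side. The two inputs are finiteness of the boundary measures and the usual BAR computation.

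\textbf{Step 1 (finiteness).} We first show $\mathbb E_{\pi_0}Y_i(1)<\infty$ for each $i$. Since $R$ is a nonsingular $M$-matrix, $R^{-1}\ge0$, and from \eqref{eq:SRBM-decomposition}
\[
 Y_1=R^{-1}\bigl(Z_1-Z_0-\mu-\Sigma^{1/2}W_1\bigr).
\]
If $\pi_0$ has a finite first moment, taking expectations gives $\mathbb E_{\pi_0}Y(1)=-R^{-1}\mu$ componentwise, which is finite. Note that the increments of $Z$ have mean zero under stationarity, and $R^{-1}\mu<0$ makes the right side positive.

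The main obstacle is justifying $\int|x|\,d\pi_0<\infty$, which we need so that $Z_1-Z_0$ is integrable. We would take it from exponential ergodicity and moment bounds for stable Harrison--Reiman SRBMs (\cite[Theorem~2.6]{DupuisWilliams1994}, or the Lyapunov bounds in \cite{LipshutzRamanan2021}). If only a weaker bound is available, we would avoid moments of $\pi_0$ altogether. The alternative uses the Lipschitz Skorokhod map: $Y$ is the regulator of the free path, so $\sup_{s\le1}|Y(s)|\le C\sup_{s\le1}|\mu s+\Sigma^{1/2}W_s|$, which is integrable uniformly in $Z_0$. For the Harrison--Reiman map this regulator bound is independent of the initial point, since the initial condition only reduces pushing. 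This alternative is the cleaner route, and we would check the constant in the Harrison--Reiman Lipschitz estimate. Either way, $\nu_i^0(F_i)=\mathbb E_{\pi_0}Y_i(1)<\infty$. The support statement holds because $Y_i$ increases only when $Z\in F_i$.

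\textbf{Step 2 (BAR).} For $f\in C_b^2(E)$, extend $f$ to a $C^2$ function near $E$ and apply It\^o's formula:
\[
 f(Z_1)-f(Z_0)=\int_0^1Lf(Z_s)\,ds+\int_0^1\nabla f(Z_s)\cdot\Sigma^{1/2}dW_s+\sum_i\int_0^1D_if(Z_s)\,dY_i(s).
\]
Because $\nabla f$ is bounded, the stochastic integral is a true martingale with mean zero. Since $Lf$ and $D_if$ are bounded and $\mathbb E Y_i(1)<\infty$, every term is integrable. Stationarity gives $\mathbb Ef(Z_1)=\mathbb Ef(Z_0)$. By Fubini and stationarity, $\mathbb E\int_0^1Lf(Z_s)\,ds=\int Lf\,d\pi_0$. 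By the definition \eqref{eq:stationary-boundary-measure} and the usual monotone-class extension from indicators to bounded measurable integrands, the last term equals $\sum_i\int_{F_i}D_if\,d\nu_i^0$. Combining these gives \eqref{eq:intro-stationary-BAR}.
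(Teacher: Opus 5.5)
Your argument is correct, and Step~2 coincides with the paper's. The real difference is how you obtain finiteness of $\mathbb E_{\pi_0}Y_i(1)$.

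The paper stays at the level of the BAR. It applies It\^o's formula under stationarity to the bounded function $\Phi_\alpha(x)=-\sum_k a_ke^{-\alpha x_k}$ with $a=R^{-T}\mathbf 1>0$. Because $R_{ki}\le0$ for $k\ne i$, this function satisfies $D_i\Phi_\alpha\ge\alpha$ on $F_i$, which yields $\alpha\sum_i\mathbb E_{\pi_0}Y_i(1)\le\|L\Phi_\alpha\|_\infty$. The argument uses only stationarity and the sign pattern of $R$, with no moments of $\pi_0$ and no Skorokhod-map estimate.

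Of your two routes, the first is not secured as written. \cite[Theorem~2.6]{DupuisWilliams1994} gives positive recurrence and uniqueness of $\pi_0$, not $\int|x|\,d\pi_0<\infty$, so it would need a separate moment result. You should lead with the second route.

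The second route is sound and is simpler than you make it. Write $X_s=\mu s+\Sigma^{1/2}W_s$, so that $Y(s)=R^{-1}(Z_s-Z_0-X_s)$. Compare the free input $Z_0+X$ with the constant input $Z_0$, which has zero regulator, using the Lipschitz property of the Skorokhod map as in \eqref{eq:increment-bound}. This gives $\sup_{s\le1}|Y(s)|\le\|R^{-1}\|(K_\Gamma+1)\sup_{s\le1}|X_s|$ directly, so no ``initial condition only reduces pushing'' comparison is needed.

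What each route buys:
\begin{itemize}
\item Your route gives a bound uniform in the initial state, $\sup_x\mathbb E_xY_i(1)<\infty$. This is exactly the one-step regulator bound \eqref{eq:ce-regulator-bound}. The price is importing the pathwise Lipschitz theory.
\item The paper's Lyapunov argument is self-contained.
\end{itemize}

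Finally, the exact mass formula $\mathbb E_{\pi_0}Y(1)=-R^{-1}\mu$ from your first route needs no moment assumption once finiteness is known. Insert $f(x)=(1-e^{-\alpha x_k})/\alpha\in C_b^2(E)$ into the stationary BAR and let $\alpha\downarrow0$. This gives $\mu_k+\sum_iR_{ki}\nu_i^0(F_i)=0$.
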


\begin{proof}
Let $a=R^{-T}\mathbf 1$. Since $R^{-1}\ge0$ and no column of the invertible matrix $R^{-1}$ is zero, $a>0$; moreover $R^Ta=\mathbf 1$. For $\alpha>0$, set
\[
 \Phi_\alpha(x)=-\sum_{k=1}^d a_k e^{-\alpha x_k}.
\]
The function and its first two derivatives are bounded. If $x\in F_i$, then, using $R_{ki}\le0$ for $k\ne i$, $x_i=0$, and $e^{-\alpha x_k}\le1$,
\[
 D_i\Phi_\alpha(x)
 =\alpha\sum_{k=1}^dR_{ki}a_ke^{-\alpha x_k}
 \ge\alpha\sum_{k=1}^dR_{ki}a_k=\alpha.
\]
It\^o's formula on $[0,1]$ gives, pathwise,
\[
 \Phi_\alpha(Z_1)-\Phi_\alpha(Z_0)
 =\int_0^1L\Phi_\alpha(Z_s)\,ds
 +M_1
 +\sum_{i=1}^d\int_0^1D_i\Phi_\alpha(Z_s)\,dY_i(s),
\]
where $M$ is a square-integrable martingale because $\nabla\Phi_\alpha$ is bounded. The first two terms on the right and the left side are integrable. The boundary sum is nonnegative, so the identity itself shows that it is integrable. Taking expectations and using stationarity therefore yields
\[
 \alpha\sum_{i=1}^d\mathbb E_{\pi_0}Y_i(1)
 \le -\mathbb E_{\pi_0}\int_0^1L\Phi_\alpha(Z_s)\,ds
 \le \|L\Phi_\alpha\|_\infty.
\]
Thus \eqref{eq:stationary-boundary-measure} is finite. Its support is contained in $F_i$ because $Y_i$ increases only there. Finally, apply It\^o's formula to $f\in C_b^2(E)$. The boundedness of $f$, $\nabla f$ and $D^2f$ makes the Brownian and drift terms integrable on $[0,1]$, and the boundary integrals are integrable by the preceding estimate. Stationarity gives
\[
 0=\int_ELf\,d\pi_0+\sum_{i=1}^d\int_{F_i}D_if\,d\nu_i^0.
\]
\end{proof}

Although \cref{prop:stationary-BAR} provides $(\pi_0,\nu_1^0,\dots,\nu_d^0)$ as a solution to the BAR equation, it remains an open question \emph{whether the BAR uniquely characterizes the stationary distribution of the diffusion process}.

\subsection{Signed BAR uniqueness in the Harrison-Reiman Class} In the Harrison-Reiman Class, \emph{i.e.} under the standing assumptions \eqref{eq:Mmatrix}--\eqref{eq:stability}, we show that the associated BAR uniquely characterizes the
stationary distribution of the diffusion process.

\begin{theorem}[Signed BAR uniqueness]\label{thm:main}
Under the standing assumptions \eqref{eq:Mmatrix}--\eqref{eq:stability}, let $\pi_0$ and $(\nu_i^0)_{i=1}^d$ be the stationary distribution of the process and the corresponding boundary measure constructed in \cref{prop:stationary-BAR}. Every finite signed BAR tuple is a scalar multiple of the stationary BAR tuple. More precisely, if \eqref{eq:signed-BAR} holds, then there exists $c\in\mathbb R$ such that
\begin{equation}\label{eq:main-conclusion}
 \bar\pi=c\pi_0,
 \qquad
 \bar\nu_i=c\nu_i^0,
 \quad i=1,\ldots,d.
\end{equation}
Consequently the vector space of finite signed BAR tuples is one-dimensional.
\end{theorem}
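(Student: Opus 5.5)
The plan follows the route sketched in the technical overview: first prove the resolvent identity \eqref{eq:intro-RI} for an arbitrary finite signed BAR tuple, then deduce $\bar\pi=c\pi_0$, and finally identify the boundary measures by induction over strata.

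\emph{Step 1 (resolvent identity).} Fix $\lambda>0$ and $h\in C_0(E)$, and set $g=R_\lambda h$. Since $g$ need not be $C^2$ at corners, I will not insert it directly into the BAR. Instead I use the one-sided mollification $g_\varepsilon(x)=\int\rho(w)g(x+\varepsilon w)\,dw$, where $\rho\in C_c^\infty(E^\circ)$, so that only interior values of $g$ are used. Then:
\begin{itemize}
\item In the interior, $g$ solves $(\lambda-L)g=h$ classically, by hypoelliptic interior regularity of the Brownian part before hitting the boundary together with the strong Markov property.
\item Because $L$ has constant coefficients, $(\lambda-L)g_\varepsilon=h_\varepsilon\to h$ boundedly and pointwise on $E$. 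Also $g_\varepsilon\in C_b^2(E)$ for each fixed $\varepsilon$, with bounds depending on $\varepsilon$.
\end{itemize}
Inserting $g_\varepsilon$ into \eqref{eq:signed-BAR} and using $\int Lg_\varepsilon\,d\bar\pi=\int(\lambda g_\varepsilon-h_\varepsilon)\,d\bar\pi$, dominated convergence reduces \eqref{eq:intro-RI} to
\[
\int_{F_i}D_ig_\varepsilon\,d\bar\nu_i\to0\quad\text{for each }i.
\]
For this I need $\sup_\varepsilon\|D_ig_\varepsilon\|_{L^\infty(F_i)}<\infty$ and $D_ig_\varepsilon(x)\to0$ pointwise for $x\in F_i$.

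\emph{Step 2 (main obstacle: the boundary terms).} I would get both properties from pathwise differentiability of the Skorokhod map in the Harrison--Reiman class (Lipshutz--Ramanan), after checking their hypotheses for the normalized data. This gives that $Z^x$ is directionally differentiable in $x$, with derivative $J_t v$ satisfying $\|J_tv\|\le C|v|$, where the Lipschitz constant of the Skorokhod map controls $C$.

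Consequently $g$ is Lipschitz, with $|\nabla g|\le C\|h\|_{\mathrm{Lip}}/\lambda$ for Lipschitz $h$; the general $h\in C_0$ follows at the end by density in the resolvent identity. This yields the uniform bound.

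For the pointwise limit, at $x\in F_i$ the derivative in direction $R_i$ is killed immediately, since the pushing direction is absorbed by the regulator: $J_tR_i=0$ for $t>0$ when $x$ lies on face $i$. Equivalently, the one-sided derivative factors through the tangent projection along active reflection directions. Then $D_ig_\varepsilon(x)=\int\rho(w)\,R_i\cdot\nabla g(x+\varepsilon w)\,dw$. Points $x+\varepsilon w$ approach $x$ from inside a cone, so I need continuity of the directional derivative of $g$ in direction $R_i$ along such nontangential interior approach. That derivative tends to the one-sided derivative at $x$, which is $0$.

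This continuity is the delicate point. I would try to show that $R_i\cdot\nabla g(y)=\mathbb E\int e^{-\lambda t}\nabla h(Z^y_t)\,J^y_tR_i\,dt$, and that as $y\to x$ within a cone the first hitting time of $F_i$ tends to $0$ in probability, after which $J^y_tR_i$ is small.

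\emph{Step 3 (invariance and the boundary).} Laplace inversion plus strong continuity of the Feller semigroup turns \eqref{eq:intro-RI} into $\bar\pi P_t=\bar\pi$. Since $|\bar\pi P_t|\le|\bar\pi|P_t$ and both sides have equal total mass, $|\bar\pi|$, and hence $\bar\pi^\pm$, are invariant. Uniqueness of the invariant probability then gives $\bar\pi=c\pi_0$.

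Subtracting $c$ times \eqref{eq:intro-stationary-BAR}, we get a tuple $\eta_i=\bar\nu_i-c\nu_i^0$ satisfying $\sum_i\int_{F_i}R_i\cdot\nabla f\,d\eta_i=0$ for all $f\in C_b^2$. I then induct on strata, starting from the interior of faces (the $S_{\{i\}}$) and moving to higher codimension.
\begin{itemize}
\item For the strata $S_{\{i\}}$: take $f(x)=\phi(x)\,x_k\,\chi(x_k/\delta)$ localized near $S_A$, which gives $\nabla f=\phi e_k$ on $\{x_k=0\}$.
\item Choosing $k$ ranging over $A$ and letting $\delta\to0$ yields $\sum_{i\in A}R_{ki}\,\eta_i|_{S_A}=0$ for $k\in A$.
\item Invertibility of the principal block $R_{AA}$, which holds for a nonsingular $M$-matrix by \cref{lem:Mmatrix}, forces $\eta_i|_{S_A}=0$ for every $A$.
\end{itemize}
This is where the $M$-matrix structure enters and where the completely-$\mathcal S$ counterexample breaks down. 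Hence $\bar\nu_i=c\nu_i^0$.
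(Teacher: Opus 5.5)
Your overall architecture is the paper's: one-sided smoothing, Lipshutz--Ramanan derivatives, the Laplace/Jordan argument, and induction over strata. Steps~1 and~3 are essentially sound. Your boundary test $\phi\,x_k\chi(x_k/\delta)$, which leads to $R_{AA}(\eta_i|_{S_A})_{i\in A}=0$, is an equivalent variant of the paper's choice $a=R_{AA}^{-T}e_k\psi$; once lower strata are removed by the induction hypothesis, even the limit $\delta\downarrow0$ is unnecessary.

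\textbf{The gap is the pointwise limit $D_ig_\varepsilon(x)\to0$ in Step~2.} You reduce it to continuity of $y\mapsto R_i\cdot\nabla g(y)$ as $y\to x$ from $E^\circ$. That amounts to continuity of the derivative process $\mathsf J^y$ in its starting point up to the boundary.

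The pathwise-differentiability theorem you invoke only gives the derivative at each \emph{fixed} initial point. Nothing in it controls $\lim_{y\to x}\mathsf J^y$, and the paper deliberately avoids asserting any such continuity (\cref{rem:no-classical-gradient}).

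Your heuristic for this continuity also fails at corners. Suppose $Z^y$ touches $F_j$, $j\ne i$, before $F_i$. Then the perturbation $R_i$ is first projected to $\mathsf L_{\{j\}}R_i=R_i-(R_{ji}/R_{jj})R_j$. The subsequent hit of $F_i$ leaves $-(R_{ji}/R_{jj})\mathsf L_{\{i\}}R_j$, which is nonzero whenever $R_{ji}\ne0$. So $\mathsf J^y_tR_i$ need not be small after the first hitting time of $F_i$. It decays only through infinitely many alternations between faces, and that would need a separate quantitative analysis.

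You cannot ignore corners, because $\bar\nu_i$ is an arbitrary finite signed measure on $F_i$ and may charge $F_i\cap F_j$ or the origin. On the open face $S_{\{i\}}$, your hitting-time argument is essentially fine.

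\textbf{The missing idea is to put the derivative on the mollifier instead of on $g$.} Because $\int R_i\cdot\nabla\rho\,dw=0$,
\[
 D_ig_\varepsilon(x)=-\int_{\R^d}(R_i\cdot\nabla\rho)(w)\,\frac{g(x+\varepsilon w)-g(x)}{\varepsilon}\,dw .
\]
This involves only one-sided difference quotients of $g$ at the fixed boundary point $x$, in directions $w\in\supp\rho\subset(0,\infty)^d\subset G_x$. These quotients converge to $\partial_w^+g(x)=\Lambda_x(\mathsf L_xw)$, with domination by $\Lip(g)\abs{w}$. Since $\ell_x=\Lambda_x\circ\mathsf L_x$ is linear on all of $\R^d$, integrating by parts back gives
\[
 \lim_{\varepsilon\downarrow0}D_ig_\varepsilon(x)=\int\rho(w)\,\ell_x(R_i)\,dw=\ell_x(R_i)=0 .
\]

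This is also where the factorization through $\mathsf L_x$ is essential, rather than mere existence of one-sided derivatives. At a corner with $R_{ji}<0$, the vector $R_i$ is not in $G_x$. So ``the derivative of $g$ at $x$ in direction $R_i$'' has meaning only through the linear extension $\ell_x$. Likewise, $\mathsf J^x_tR_i$ is not defined, since $\mathsf J^x_t$ acts on $H_x$; the correct statement is $\mathsf L_xR_i=0$. With this replacement the fixed-point derivative theorem suffices, and no boundary continuity of $\nabla g$ is needed.
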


To prove uniqueness of finite signed BAR tuples, we first show that every BAR tuple satisfies a resolvent identity \eqref{eq:RI}; we call this identity \emph{resolvent insertion}. The resolvent insertion identity implies invariance of the interior signed measure under the reflected semigroup, and hence \(\bar{\pi}=c\pi_0\). After subtracting the interior stationary BAR, the remaining identity is purely on the boundary, and pure boundary injectivity gives \(\bar{\nu}_i=c\nu_i^0\) for all \(i=1,\ldots,d\).

\begin{proposition}[Resolvent identity criterion]\label{prop:RI-criterion}
Assume that for every finite signed BAR tuple, every $h\in C_0(E)$, and every $\lambda>0$,
\begin{equation}\label{eq:RI}
\tag{RI}
 \int_E(\lambda R_\lambda h-h)\,d\bar\pi=0.
\end{equation}
Then the conclusion of \cref{thm:main} holds.
\end{proposition}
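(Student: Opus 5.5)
The plan has three steps: turn \eqref{eq:RI} into invariance of $\bar\pi$, identify $\bar\pi=c\pi_0$ through the Jordan decomposition, and then remove the boundary measures by induction over boundary strata.

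\textbf{Step 1: invariance of $\bar\pi$.} Fix $h\in C_0(E)$ and put $\phi(t)=\int_E P_th\,d\bar\pi-\int_E h\,d\bar\pi$. I use the Feller property (stated in \cref{thm:regularity-SRBM}): $P_th\in C_0(E)$ and $t\mapsto P_th$ is norm-continuous. Hence $\phi$ is bounded by $2\|h\|_\infty\|\bar\pi\|_{\TV}$ and continuous. Fubini applied to the finite measure $|\bar\pi|\otimes e^{-\lambda t}dt$ gives
\[
\int_0^\infty \lambda e^{-\lambda t}\phi(t)\,dt=\int_E(\lambda R_\lambda h-h)\,d\bar\pi=0 \qquad\text{for all }\lambda>0.
\]
So the Laplace transform of the bounded continuous function $\phi$ vanishes identically. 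By uniqueness of Laplace transforms, $\phi\equiv0$. Thus $\bar\pi P_t=\bar\pi$ as functionals on $C_0(E)$, and by Riesz uniqueness also as measures.

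\textbf{Step 2: $\bar\pi=c\pi_0$.} Write $\bar\pi=\bar\pi^+-\bar\pi^-$ for the Jordan decomposition. Positivity of $P_t$ gives $|\bar\pi|=|\bar\pi P_t|\le|\bar\pi|P_t$ setwise. Since $P_t1=1$, both sides have the same total mass, so equality holds and $|\bar\pi|$ is invariant. Then $\bar\pi^\pm=(|\bar\pi|\pm\bar\pi)/2$ are invariant finite positive measures. Normalizing each nonzero one and using uniqueness of the invariant probability $\pi_0$ gives $\bar\pi^\pm=a_\pm\pi_0$. Hence $\bar\pi=c\pi_0$ with $c=a_+-a_-$.

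\textbf{Step 3: the boundary measures.} Set $\eta_i=\bar\nu_i-c\nu_i^0$. Subtracting $c$ times \eqref{eq:intro-stationary-BAR} leaves the pure boundary identity
\[
\sum_i\int_{F_i}D_if\,d\eta_i=0 \qquad\text{for all } f\in C_b^2(E).
\]
I show $\eta_i|_{S_A}=0$ for $i\in A$, by induction on strata, starting from small $|A|$ (the codimension-one faces) and moving toward the corner. Fix $A$ and a point $x$ in $S_A$. Near $x$, only the strata $S_B$ with $B\subseteq A$ are present. Take tests of the form
\[
f(y)=\psi(y_{A^c})\sum_{k\in A}v_k y_k\,\chi(y_A/\varepsilon),
\]
with $\psi\in C_c^\infty$ supported where $y_j>0$ for $j\notin A$, and $\chi\in C_c^\infty$ equal to $1$ near $0$. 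On $S_A$ we have $\nabla f=\psi\,v$ in the $A$-coordinates, so $D_if=\psi\,(R_{AA}^Tv)_i$ there. Since $R_{AA}$ is a principal block of a nonsingular $M$-matrix, it is itself a nonsingular $M$-matrix (\cref{lem:Mmatrix}). Hence $v$ can be chosen so that $R_{AA}^Tv=e_i$ for any $i\in A$.

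\textbf{Main obstacle.} The difficulty is controlling the contributions from the lower strata $S_B$, $B\subsetneq A$, as $\varepsilon\to0$. The linear factor keeps $\nabla f$ bounded uniformly in $\varepsilon$. The term carrying the factor $\chi'$ is supported where $|y_A|\sim\varepsilon$, and its size is controlled by $|y|/\varepsilon$. Meanwhile $|\eta_i|$ restricted to $\{0<|y_A|\lesssim\varepsilon\}\cap F_i$ tends to $0$ by continuity from above of finite measures, since these sets shrink to $S_A$ and the part on $S_A$ itself is handled separately. Dominated convergence then gives $\int_{S_A}\psi\,d\eta_i=0$. Because this holds for all such $\psi$, we get $\eta_i|_{S_A}=0$ for every $i\in A$.

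Ordering the induction from small $|A|$ (the faces) to large $|A|$ (the corner) is not needed: the shrinking-set argument handles every stratum directly. It also removes the need for a separate argument at the corner point, where $\psi$ is trivial. This gives $\bar\nu_i=c\nu_i^0$ for all $i$.
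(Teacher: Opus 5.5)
Your proof is correct. Steps 1 and 2 follow the paper's argument (\cref{prop:semigroup-invariance} and \cref{prop:interior-identification}): Laplace uniqueness applied to the bounded continuous defect $t\mapsto\bar\pi(P_th)-\bar\pi(h)$, and then the Jordan-decomposition argument.

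Step 3 takes a different route from the paper's \cref{prop:pure-boundary}.

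\textbf{The paper's route.} It proves $\eta_k|_{S_A}=0$ by induction on $|A|$ with a \emph{fixed} normal cutoff. The contributions of the lower strata $S_C$, $C\subsetneq A$, vanish by the induction hypothesis. Each step is therefore purely algebraic and involves no limit.

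\textbf{Your route.} You shrink the normal cutoff to $\chi(y_A/\varepsilon)$ and remove the lower-strata contributions by dominated convergence. This works for three reasons.
\begin{itemize}
\item The linear factor $\sum_k v_ky_k$ keeps $\nabla f$ bounded uniformly in $\varepsilon$. The $\chi'$ term is $O(|y_A|/\varepsilon)=O(1)$ on its support, and the tangential derivatives are $O(\varepsilon)$.
\item Every point of $F_i\setminus S_A$ in the support has $y_A\neq0$, so it eventually leaves the support.
\item The $S_A$ contribution $\int_{S_A}\psi\,d\eta_m$ does not depend on $\varepsilon$.
\end{itemize}
The point that makes this legitimate is that the pure boundary identity involves only first derivatives. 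The $O(\varepsilon^{-1})$ Hessian of your test functions is therefore harmless, and each $f$ is still in $C_b^2(E)$ for fixed $\varepsilon$.

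\textbf{What your version buys.} It is local. It shows $\eta_m|_{S_A}=0$ using only invertibility of the single block $R_{AA}$. The paper's induction, for a given stratum, implicitly uses invertibility of every $R_{CC}$ with $C\subseteq A$. This makes no difference in the $M$-matrix class, but it is a slightly sharper, stratum-by-stratum statement.

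\textbf{Minor wording.} The sets $\{0<|y_A|\lesssim\varepsilon\}$ decrease to the empty set, not to $S_A$. The bound should read $|y_A|/\varepsilon$ rather than $|y|/\varepsilon$. Your first sentence of Step 3 announces an induction that you later (correctly) say is unnecessary.
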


The proof of \cref{prop:RI-criterion} is given in \cref{sec:completion}. 

\paragraph{Why the resolvent insertion \eqref{eq:RI} should hold} The reason for targeting \eqref{eq:RI} is transparent from the classical Neumann calculation.  Let $g=R_\lambda h$.  If $g$ were an admissible $C_b^2$ test and if it satisfied $D_i g=0$ on $F_i$ for $i=1,\ldots,d$, then inserting $g$ into the BAR would give
\[
0=\int_E Lg\,d\bar\pi+
  \sum_i\int_{F_i}D_i g\,d\bar\nu_i
 =\int_E Lg\,d\bar\pi.
\]
The resolvent equation $(\lambda-L)g=h$ would therefore imply $ \int_E(\lambda R_\lambda h-h)\,d\bar\pi=0.$ This is only an informal guide.  The closed-domain $C^2$ regularity required for this insertion may fail even in the stable Harrison--Reiman class.  \Cref{app:resolvent-not-C2} gives an explicit stable nonsingular $M$-matrix example and a smooth compactly supported $h$ for which $R_\lambda h\notin C^2(E)$.  The proof below therefore does not try to show that the resolvent belongs to a classical oblique-Neumann core.

\subsection{Making the resolvent insertion rigorous}

The replacement for the formal insertion is a measure-level Neumann approximation.  For smooth compactly supported $h$ we construct tests $g_\varepsilon\in C_b^2(E)$ such that, as $\varepsilon\downarrow0$,
\[
 g_\varepsilon\to R_\lambda h,
 \qquad
 (\lambda-L)g_\varepsilon\to h,
\]
against every finite signed interior measure, while
\[
 \int_{F_i}D_i g_\varepsilon\,d\bar\nu_i\to0,
 \qquad i=1,\ldots,d,
\]
for every finite signed boundary measure.  This is exactly what is needed to pass to the limit in the BAR.  The convergence is not a pointwise assertion that $R_\lambda h$ admits a classical oblique derivative $D_iR_\lambda h$ on $F_i$; it is an assertion that the boundary pairings seen by the BAR vanish.  \Cref{prop:measure-Neumann} gives the precise statement, and a density argument then extends \eqref{eq:RI} from smooth compactly supported $h$ to all $h\in C_0(E)$.  The next two sections supply the projected derivative input and the one-sided smoothing construction.

\Cref{fig:proof-architecture} summarizes where this approximation sits in the proof: the analytic work proves the target resolvent identity, while the remaining steps are the soft semigroup and boundary-identification arguments.

\begin{figure}[!htbp]
\centering
\resizebox{\textwidth}{!}{%
\begin{tikzpicture}[
  x=1cm,y=1cm,
  font=\footnotesize,
  text=proofink,
  >=Latex,
  box/.style={
    draw=proofink!48,
    rounded corners=3pt,
    line width=.55pt,
    fill=white,
    align=center,
    inner xsep=6pt,
    inner ysep=6pt,
    execute at begin node={\hyphenpenalty=10000\exhyphenpenalty=10000\emergencystretch=1.5em}
  },
  need/.style={box, draw=proofpurple!85!black, fill=proofpurplebg, text width=5.65cm, minimum height=2.15cm},
  bridge/.style={box, draw=proofpurple!85!black, fill=proofpurplebg, text width=7.40cm, minimum height=2.95cm, font=\scriptsize},
  inv/.style={box, draw=proofblue!78!black, fill=proofbluebg, text width=5.50cm, minimum height=2.15cm},
  why/.style={box, draw=proofblue!78!black, fill=proofbluebg, text width=6.00cm, minimum height=1.55cm},
  finalbox/.style={box, draw=proofgold!85!black, fill=proofgoldbg, text width=8.80cm, minimum height=1.00cm},
  bad/.style={box, draw=proofred!80!black, fill=proofredbg, text width=5.90cm, minimum height=2.05cm},
  good/.style={box, draw=proofgreen!78!black, fill=proofgreenbg, text width=5.90cm, minimum height=2.05cm},
  action/.style={box, draw=proofink!45, fill=proofgraybg, text width=13.30cm, minimum height=1.55cm, font=\scriptsize},
  arr/.style={-{Latex[length=2.35mm,width=1.75mm]}, line width=.65pt, draw=proofink!72},
  dasharr/.style={-{Latex[length=2.35mm,width=1.75mm]}, line width=.70pt, dashed, draw=proofpurple!85!black},
  lab/.style={font=\scriptsize, fill=white, inner xsep=3pt, inner ysep=1pt, text=proofmuted},
  glabel/.style={font=\scriptsize\scshape, text=proofmuted, fill=white, inner xsep=4pt, inner ysep=1pt}
]

\node[need] (need) at (-7.25,0) {\textbf{Target identity: the hard step}\\[-1pt]
For every $h\in C_c^\infty(\mathbb R^d)$ and $\lambda>0$,
\[
  \bar\pi(h)=\lambda\,\bar\pi(R_\lambda h).
\]
This is the resolvent form of stationarity.};

\node[bridge] (lap) at (0,0) {\textbf{Laplace uniqueness converts resolvents to invariance}\\\vspace{0.1in}
Define the defect $A_h(t):=\bar\pi(P_t h)-\bar\pi(h).$
Using $R_\lambda h=\int_0^\infty e^{-\lambda t}P_t h\,dt$, Fubini turns the target identity into
\[
\int_0^\infty e^{-\lambda t}A_h(t)\,dt=0\qquad(\lambda>0).
\]};

\node[inv] (inv) at (7.25,0) {\textbf{Semigroup invariance}\\[-1pt]
Thus $\bar\pi(P_t h)=\bar\pi(h)$ for all $t\ge0$.\\[-1pt]
Density of $C_c^\infty|_E$ in $C_0(E)$, plus contraction of $P_t$, extends this to all $\phi\in C_0(E)$. Hence $\bar\pi P_t=\bar\pi$.};

\node[why] (jordan) at (3.60,-3.90) {\textbf{Interior measure is then forced}\\[-1pt]
For a finite signed invariant measure, positivity gives $|\mu P_t|\le |\mu|P_t$. Total mass equality makes the Jordan parts invariant.\\[-1pt]
Uniqueness of $\pi_0$ gives $\bar\pi=c\pi_0$.};

\node[why] (boundary) at (-3.60,-3.90) {\textbf{Boundary measures then follow}\\[-1pt]
Subtract $c$ times the stationary BAR. On each stratum $S_A$, $R_{AA}^{-T}$ prescribes the active oblique jets $(D_i f)_{i\in A}$.\\[-1pt]
Induction over $|A|$ gives $\bar\nu_i=c\nu_i^0$.};

\node[finalbox] (final) at (0,-6.15) {\textbf{Signed BAR uniqueness}\\[-1pt]
$\displaystyle (\bar\pi,\bar\nu_1,\ldots,\bar\nu_d)=c(\pi_0,\nu_1^0,\ldots,\nu_d^0).$};

\draw[arr] (need.east) -- (lap.west);
\draw[arr] (lap.east) -- (inv.west);
\coordinate (invdown) at ($(inv.south)+(0,-0.95)$);
\coordinate (jup) at (jordan.north |- invdown);
\draw[arr] (inv.south) -- (invdown) -- (jup) -- (jordan.north);
\draw[arr] (jordan.west) -- node[lab,above] {$\bar\pi=c\pi_0$} (boundary.east);
\draw[arr] (boundary.south) to[out=-90,in=120] (final.north west);
\draw[arr] (jordan.south) to[out=-90,in=60] (final.north east);

\node[bad] (obstacle) at (-6.45,-9.25) {\textbf{Why direct insertion fails}\\[-1pt]
The natural test is $g=R_\lambda h$, because $(\lambda-L)g=h$ in the interior.\\[-1pt]
But the BAR accepts bounded $C^2$ tests and boundary terms $D_i f$. At corners, $g$ need not have a classical ambient gradient, so $D_i g=0$ is unavailable.};

\node[good] (projected) at (0,-9.25) {\textbf{Projected derivative replaces a boundary gradient}\\[-1pt]
For feasible inward directions,
\[
  \partial_w^+g(x)=\Lambda_x(\mathsf L_xw).
\]
If $x\in F_i$, then $\mathsf L_xR_i=0$, hence the ambient linear extension satisfies $\ell_x(R_i)=0$. This is the usable oblique information.};

\node[good] (smooth) at (6.45,-9.25) {\textbf{One-sided smoothing turns it into BAR tests}\\[-1pt]
Smooth only from inside:
\[
  g_\varepsilon(x)=\int \rho(w)g(x+\varepsilon w)\,dw .
\]
The support $\operatorname{supp}\rho\Subset(0,\infty)^d$ keeps every sampled direction feasible. Integration by parts and domination give $D_i g_\varepsilon\to0$ on $F_i$.};

\node[action] (barlimit) at (0,-12.55) {\textbf{Measure--Neumann approximation}\\[-1pt]
Use $g_\varepsilon\in C_b^2(E)$ directly. Apply the signed BAR and send $\varepsilon\downarrow0$. Boundary integrals vanish for every finite signed $\bar\nu_i$; interior terms converge to $\bar\pi(h)=\lambda\bar\pi(R_\lambda h)$.};

\draw[arr] (obstacle.east) -- (projected.west);
\draw[arr] (projected.east) -- (smooth.west);
\draw[arr] (obstacle.south) to[out=-90,in=160] (barlimit.west);
\draw[arr] (projected.south) -- (barlimit.north);
\draw[arr] (smooth.south) to[out=-90,in=20] (barlimit.east);

\coordinate (feedbackleft) at (-9.65,-12.55);
\draw[dasharr] (barlimit.west) -- (feedbackleft) |- (need.west);

\begin{scope}[on background layer]
  \node[draw=proofblue!13, fill=proofblue!3, rounded corners=5pt, inner sep=10pt, fit=(need)(lap)(inv)(jordan)(boundary)(final)] (macrogrp) {};
  \node[glabel, anchor=south west] at ($(macrogrp.north west)+(0.25,-0.02)$) {$\bar\pi(h)=\lambda\bar\pi(R_\lambda h)$ leads to signed BAR uniqueness};
  \node[draw=proofgreen!16, fill=proofgreen!4, rounded corners=5pt, inner sep=10pt, fit=(obstacle)(projected)(smooth)(barlimit)] (analyticgrp) {};
  \node[glabel, anchor=south west] at ($(analyticgrp.north west)+(0.25,-0.02)$) {Proving $\bar\pi(h)=\lambda\bar\pi(R_\lambda h)$: how the missing resolvent boundary regularity is bypassed};
\end{scope}

\end{tikzpicture}%
}
\caption{\textbf{Proof architecture for the uniqueness of the Harrison-Reiman class.} The lower half is the analytic insertion mechanism: \cref{prop:resolvent-regularity-projected} supplies the projected derivative used by the one-sided smoothing, and \cref{prop:measure-Neumann} turns the smoothed functions into admissible BAR tests.  The upper half is the soft reduction: the resulting resolvent identity gives semigroup invariance, then signed uniqueness of the interior measure and finally the boundary measures.}
\label{fig:proof-architecture}
\end{figure} 

The diagram also shows why the proof first studies the nonsmooth resolvent before carrying out the smoothing.  For fixed $h$ and $\lambda$, let $g=R_\lambda h$.  The smoothed BAR tests used later are
\[
        g_\varepsilon(x)=\int \rho(w)g(x+\varepsilon w)\,dw,
        \qquad \supp\rho\subset(1,2)^d.
\]
They must approximate $g$ in the interior equation while also satisfying an asymptotic oblique-Neumann condition on each face: $D_i g_\varepsilon\to0$ in pairings with arbitrary finite signed measures on $F_i$.  This is why \cref{sec:resolvent-regularity} proves a boundary statement for $g$ itself before any smoothing is introduced.  Although $g$ need not be $C^2$ on the closed orthant, its feasible one-sided derivatives exist at boundary points and factor through the active tangent projection; the resulting linear extension $\ell_x$ satisfies $\ell_x(R_i)=0$ on active faces, acting as an analog to the classical gradient.  The one-sided convolution $g_\varepsilon$ in \cref{sec:smoothing} is then precisely designed to inherit this first-order oblique flatness in the weaker, measure-level form needed by the BAR.

\section{Resolvent regularity and projected boundary derivatives}\label{sec:resolvent-regularity}

The goal of this section is to prove \cref{prop:resolvent-regularity-projected}, the input that makes the measure--Neumann approximation in \cref{sec:smoothing} possible.  Section~\ref{sec:smoothing} will construct $g_\varepsilon\in C_b^2(E)$ from $g=R_\lambda h$ and will need three properties: $g_\varepsilon\to g$, $(\lambda-L)g_\varepsilon\to h$, and $D_i g_\varepsilon\to0$ on $F_i$ after integration against arbitrary finite signed boundary measures.  The first two properties come from interior smoothing and the interior resolvent equation.  The third property comes from the boundary information proved here: at a boundary point, the feasible directional derivative of $g$ factors through the active tangent projection.  Combining this factorization with the identity $\mathsf L_xR_i=0$ gives the usable oblique information $\ell_x(R_i)=0$ on $F_i$, which is exactly what later forces $D_i g_\varepsilon\to0$ in boundary-measure pairings.

The proof has two ingredients.  The algebraic ingredient is the nonsingularity of every active principal reflection block, which gives the explicit projection $\mathsf L_x$.  The stochastic ingredient is external: the Lipshutz--Ramanan initial-condition derivative theorem for the normalized Harrison--Reiman reflected diffusion, together with well posedness, strong-continuity property, and the synchronous Lipschitz estimate.  The source-to-assumption conversion is carried out in the proof of \cref{thm:regularity-SRBM}, where each source hypothesis is recalled in the present orthant specialization and verified with a self-contained argument; no unlisted regularity or boundary conclusion is used.

For $x\in E$, define $I(x)=\{i\in J:x_i=0\},$ and put
\begin{align}
 G_x=\{w\in\R^d:w_i\ge0\text{ for }i\in I(x)\}, \qquad
 H_x=\{v\in\R^d:v_i=0\text{ for }i\in I(x)\}.
\end{align}

\begin{proposition}[Resolvent regularity and projected boundary derivatives]\label{prop:resolvent-regularity-projected}
Let $\lambda>0$ and let $h\in C_c^\infty(\mathbb R^d)$ be regarded as a function on $E$. Define $g(x)=R_\lambda h(x)
 :=\mathbb E\left[\int_0^\infty e^{-\lambda t}h(Z_t^x)\,dt\right],$ then we have:
\begin{enumerate}[label=\textup{(\roman*)}]
\item $g$ is bounded and globally Lipschitz on $E$.
\item $g$ is a classical solution of the resolvent equation in $E^\circ$; more precisely, $g\in C^\infty(E^\circ)$ and $ (\lambda-L)g=h$ in $E^\circ$.
\item At each $x\in E$, feasible one-sided directional derivatives $\partial_w^+g(x)$ exist for $w\in G_x$.
\item If $A=I(x)$, then the principal-block projection
\(
 \mathsf L_xv=v-R_A R_{AA}^{-1}v_A
\)
maps $\mathbb R^d$ onto $H_x$, and there is a linear functional $\Lambda_x:H_x\to\mathbb R$ such that
\[
 \partial_w^+g(x)=\Lambda_x(\mathsf L_xw),
 \qquad w\in G_x.
\]
\item With $\ell_x(v)=\Lambda_x(\mathsf L_xv)$, we have  $\ell_x(R_i)=0$, for $i\in I(x).$
\end{enumerate}
\end{proposition}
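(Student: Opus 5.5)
We would prove the five items in order, using the pathwise (Lipschitz and differentiability) properties of the Skorokhod map together with dominated convergence under the resolvent integral. Only the final algebraic identity needs the $M$-matrix structure beyond well posedness.

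\textbf{Item (i).} Boundedness is immediate: $|g|\le\|h\|_\infty/\lambda$. For the Lipschitz bound we use a synchronous coupling. Drive $Z^x$ and $Z^y$ by the same Brownian motion. Since $Z^x=\Gamma(x+\mu t+\Sigma^{1/2}W)$, where $\Gamma$ is the Harrison--Reiman Skorokhod map, and $\Gamma$ is Lipschitz in the sup norm for $M$-matrix data, we get $\sup_t|Z^x_t-Z^y_t|\le K|x-y|$. Integrating $e^{-\lambda t}|h(Z^x_t)-h(Z^y_t)|$ then gives $\Lip(g)\le K\Lip(h)/\lambda$.

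\textbf{Item (ii).} This is interior elliptic regularity. The generator $L$ is uniformly elliptic with constant coefficients. Standard Dynkin/strong Markov arguments on balls $B\Subset E^\circ$ (with the exit time $\tau$ from $B$ occurring before any reflection) show that $g$ satisfies, on $B$,
\[
g(x)=\mathbb E_x\left[\int_0^\tau e^{-\lambda t}h(Z_t)\,dt+e^{-\lambda\tau}g(Z_\tau)\right].
\]
So $g$ is the probabilistic solution of a Dirichlet problem for $(\lambda-L)u=h$ with continuous boundary data. Hence $g$ is the classical solution, and it is $C^\infty$ on $B$ because $h$ is smooth.

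\textbf{Items (iii) and (iv).} These are the substantive stochastic input. We would invoke the Lipshutz--Ramanan pathwise derivative theorem for the normalized Harrison--Reiman reflected diffusion, after first checking that its hypotheses hold for our data. That theorem gives, for $w\in G_x$ (so that $x+\varepsilon w\in E$ for small $\varepsilon>0$), a derivative process
\[
\partial_w Z^x_t=\lim_{\varepsilon\downarrow0}\varepsilon^{-1}(Z^{x+\varepsilon w}_t-Z^x_t),
\]
which exists a.s.\ for a.e.\ $t$. This process solves a linearized derivative problem; its key structural features are the following. At time $0^+$ it jumps to $\mathsf L_xw$, because the active constraints $i\in A$ are immediately pushed back by the regulator, which removes the $A$-components using $R_A R_{AA}^{-1}$. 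Thereafter it depends on $w$ only through $\mathsf L_xw$, linearly.

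\emph{The projection $\mathsf L_x$.} To see that $\mathsf L_x$ maps onto $H_x$, compute $(\mathsf L_xv)_A=v_A-R_{AA}R_{AA}^{-1}v_A=0$. Moreover $\mathsf L_x$ is the identity on $H_x$, so it is a projection onto $H_x$. Invertibility of $R_{AA}$ holds because principal submatrices of nonsingular $M$-matrices are nonsingular $M$-matrices (\cref{lem:Mmatrix}).

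\emph{The functional $\Lambda_x$.} Given the derivative process, the Lipschitz bound from item (i) supplies a dominating function for the difference quotients. Dominated convergence then yields
\[
\partial_w^+g(x)=\mathbb E\int_0^\infty e^{-\lambda t}\nabla h(Z^x_t)\cdot\partial_wZ^x_t\,dt.
\]
We define $\Lambda_x(v)$ as the same expression with the derivative process started from $v\in H_x$. This is linear in $v$, and it satisfies $\partial_w^+g(x)=\Lambda_x(\mathsf L_xw)$ for $w\in G_x$.

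\textbf{Item (v).} This is algebra. Since $R_i=R_Ae_i$ for $i\in A$, we have
\[
\mathsf L_xR_i=R_Ae_i-R_AR_{AA}^{-1}R_{AA}e_i=0,
\]
so $\ell_x(R_i)=\Lambda_x(0)=0$.

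\textbf{Main obstacle.} The hard step is (iii)--(iv). It requires verifying the Lipshutz--Ramanan hypotheses in the orthant with $M$-matrix data, and then identifying the initial jump of the derivative process as exactly $\mathsf L_xw$, in particular its independence from the $A$-components of $w$ beyond the projection. We would first try to read this directly off their characterization of derivatives via the derivative problem at $t=0$. Failing that, we would argue deterministically: when the input path starts on $S_A$ with perturbation $\varepsilon w$, the Skorokhod map instantaneously applies a regulator $R_AR_{AA}^{-1}\varepsilon w_A^-$ on the active coordinates. For $w\in G_x$ the perturbation is feasible, and the linearized regulator increment cancels $w_A$, so the perturbed solution behaves to first order like one started from $x+\varepsilon\mathsf L_xw$. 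Combining this with Lipschitz continuity of the map would give the factorization.
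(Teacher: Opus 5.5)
Your proposal is correct and follows essentially the same route as the paper: (i) by synchronous coupling and the Lipschitz Skorokhod map, (ii) by the stopped-resolvent representation on interior balls plus interior elliptic regularity (the paper makes this rigorous by approximating the boundary data by polynomials, solving the Dirichlet problems with Schauder theory, and passing to the limit with interior Schauder estimates), and (iii)--(v) by invoking the Lipshutz--Ramanan derivative theorem, setting $\Lambda_x(u)=\mathbb E\int_0^\infty e^{-\lambda t}\nabla h(Z_t^x)\cdot\mathsf J_t^x[u]\,dt$ for $u\in H_x$, dominating difference quotients by the synchronous Lipschitz bound, and using $\mathsf L_xR_i=0$. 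The paper settles the step you flag as the main obstacle by reading it off Lipshutz--Ramanan: the right-continuous regularization satisfies $\lim_{s\downarrow t}\partial_wZ_s^x=\mathsf J_t^x[\mathsf L_xw]$ and $\mathbb P(Z_t^x\in E^\circ)=1$ for each fixed $t>0$, which gives $\partial_wZ_t^x=\mathsf J_t^x[\mathsf L_xw]$ almost surely and hence $dt\otimes\mathbb P$-a.e.\ by Fubini (your deterministic fallback is misphrased, since $w_A^-=0$ for $w\in G_x$; the $A$-components are absorbed because the unperturbed regulator increases immediately on the active faces, which is the boundary-jitter input).
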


\subsection{Matrix normalization and active-set projections}

Normalize the reflection directions by
\[
 \Delta=\diag(R_{11},\ldots,R_{dd}),
 \qquad
 \widehat R=R\Delta^{-1},
 \qquad
 d_i=\widehat R_i=R_i/R_{ii}.
\]
Positive rescaling of a reflection direction only rescales its regulator and does not change the reflected path.

\begin{lemma}[Principal block projection]\label{lem:Mmatrix}\label{lem:projection}
The normalized matrix has the Harrison--Reiman form
\begin{equation}\label{eq:HR-form}
 \widehat R=I-P^T,
 \qquad P\ge0,
 \qquad \rho(P)<1.
\end{equation}
Every principal submatrix $R_{AA}$ is a nonsingular $M$-matrix and $R_{AA}^{-1}\ge0$. In particular, for every nonempty $A\subset J$, the active directions $\{d_i:i\in A\}$ are linearly independent. For $A\subset J$, define
\begin{equation}\label{eq:projection-formula}
 L_Av=v-R_A R_{AA}^{-1}v_A,
\end{equation}
with $L_\varnothing$ equal to the identity. If $A=I(x)$, then $L_A=\mathsf L_x$ is the (unique) linear map from $\mathbb R^d$ to $H_x$ such that $\mathsf L_xv-v\in\operatorname{span}\{R_i:i\in A\}$. Moreover,
\begin{equation}\label{eq:projection-kills}
 L_A R_i=0,
 \qquad i\in A,
\end{equation}
and
\begin{equation}\label{eq:projection-uniform-constant}
 C_{\mathsf L}:=\max_{A\subset J}\left\|I-R_A R_{AA}^{-1}\pi_A\right\|<\infty,
\end{equation}
where $\pi_Av=v_A$ and the expression for $A=\varnothing$ is the identity.
\end{lemma}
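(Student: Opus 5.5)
The proof of \cref{lem:Mmatrix} is pure linear algebra on $M$-matrices; we take the following steps in order.

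\textbf{Harrison--Reiman form.} Since $R_{ii}>0$, the matrix $\widehat R=R\Delta^{-1}$ has unit diagonal and nonpositive off-diagonal entries. Define $P^T=I-\widehat R$. Then $P\ge0$ and its diagonal vanishes. Also
\[
\widehat R^{-1}=\Delta R^{-1}\ge0 .
\]
The standard characterization of nonsingular $M$-matrices now gives $\rho(P)<1$. Concretely, $B:=P^T\ge0$ and $(I-B)^{-1}\ge0$. Take a Perron vector $z\ge0$, $z\ne0$, with $Bz=\rho(B)z$. If $\rho(B)\ge1$, then $(I-B)z=(1-\rho(B))z\le0$. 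Applying $(I-B)^{-1}\ge0$ gives $z\le0$, which with $z\ge0$ forces $z=0$, a contradiction. Hence $\rho(P)=\rho(P^T)<1$.

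\textbf{Principal blocks.} For $A\subset J$, the block $\widehat R_{AA}=I-(P^T)_{AA}$, and $0\le(P^T)_{AA}$ is a principal submatrix of $P^T$. By monotonicity of the spectral radius for nonnegative matrices (pad with zeros, so that $0\le \tilde B\le B$), we get $\rho((P^T)_{AA})\le\rho(P)<1$. The Neumann series then gives
\[
\widehat R_{AA}^{-1}=\sum_k (P^T)_{AA}^k\ge0 .
\]
Since $R_{AA}=\widehat R_{AA}\Delta_{AA}$, it follows that $R_{AA}^{-1}=\Delta_{AA}^{-1}\widehat R_{AA}^{-1}\ge0$, and $R_{AA}$ is a nonsingular $M$-matrix. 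Linear independence of $\{d_i:i\in A\}$ follows because the $A$-rows of these columns form the invertible block $\widehat R_{AA}$.

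\textbf{The projection.} Set $L_Av=v-R_AR_{AA}^{-1}v_A$. Its $A$-coordinates are
\[
v_A-R_{AA}R_{AA}^{-1}v_A=0,
\]
so $L_Av\in H_x$ when $A=I(x)$, and $L_Av-v\in\operatorname{span}\{R_i:i\in A\}$. For uniqueness, suppose $u,u'\in H_x$ both differ from $v$ by elements of that span. Then $u-u'=R_Ac$ has zero $A$-coordinates, so $R_{AA}c=0$, hence $c=0$ and $u=u'$. Surjectivity onto $H_x$ holds because $L_A$ is the identity on $H_x$ (there $v_A=0$). Formula \eqref{eq:projection-kills} is immediate: $R_i=R_Ae_i$, so
\[
L_AR_i=R_Ae_i-R_AR_{AA}^{-1}R_{AA}e_i=0 .
\]
Finally, \eqref{eq:projection-uniform-constant} is a maximum over the finitely many subsets $A\subset J$ of operator norms of fixed matrices, hence finite.

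\textbf{Main obstacle.} The only step with content is $\rho(P)<1$ together with inheritance by principal blocks. We handle it through the Perron--Frobenius argument and the monotonicity $\rho(B_{AA})\le\rho(B)$, both standard facts. No degenerate (reducible) case is an issue, because Perron--Frobenius in its weak form supplies a nonnegative eigenvector for every nonnegative matrix.
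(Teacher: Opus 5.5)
Your proposal is correct and follows essentially the same route as the paper: Perron--Frobenius plus $\widehat R^{-1}=\Delta R^{-1}\ge0$ gives $\rho(P)<1$, the bound $\rho<1$ passes to principal blocks, a Neumann series yields $R_{AA}^{-1}\ge0$, and $L_A$ is then read off directly from $R_{AA}^{-1}$. The differences are cosmetic. You handle $\rho(P)=1$ and $\rho(P)>1$ in a single case. You justify $\rho((P^T)_{AA})\le\rho(P)$ by zero-padding monotonicity instead of the entrywise power bound with Gelfand's formula. You also spell out uniqueness and surjectivity of $\mathsf L_x$ more explicitly than the paper does.
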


\begin{proof}
The diagonal of $\widehat R$ is one and its off-diagonal entries are nonpositive, so $P^T:=I-\widehat R$ is nonnegative. Also
\[
 \widehat R^{-1}=\Delta R^{-1}\ge0.
\]
By Perron--Frobenius, in the standard nonnegative-matrix form summarized for example in \cite[Chapter~2]{BermanPlemmons1994}, $P^T$ has a nonzero vector $v\ge0$ with $P^Tv=\rho(P)v$. If $\rho(P)=1$, then $\widehat Rv=0$, contradicting invertibility. If $\rho(P)>1$, then $\widehat Rv=(1-\rho(P))v\le0$; multiplying by $\widehat R^{-1}\ge0$ gives $v\le0$, again a contradiction. Hence $\rho(P)<1$.

For a principal index set $A$, the principal block $(P^T)_{AA}$ is nonnegative and $\rho((P^T)_{AA})\le\rho(P^T)<1$. One direct verification of the inequality is that $((P^T)_{AA})^n$ is entrywise bounded by the $AA$ block of $(P^T)^n$, after which Gelfand's formula applies. Therefore
\[
 (I_A-(P^T)_{AA})^{-1}=\sum_{n=0}^\infty ((P^T)_{AA})^n\ge0.
\]
Since $R_{AA}=(I_A-(P^T)_{AA})\Delta_A$, it follows that
\[
 R_{AA}^{-1}=\Delta_A^{-1}(I_A-(P^T)_{AA})^{-1}\ge0.
\]
Linear independence of the active normalized columns follows by restricting a relation $\sum_{i\in A}a_i d_i=0$ to rows in $A$. The maximum in \eqref{eq:projection-uniform-constant} is finite because the active-set lattice is finite.
For the projection claim, a vector of the form $v-R_Aa$ belongs to $H_x$ exactly when $v_A-R_{AA}a=0$. The preceding paragraph gives $a=R_{AA}^{-1}v_A$. If $v=R_i$ with $i\in A$, then $v_A=R_{AA}e_i$, which proves \eqref{eq:projection-kills}.
\end{proof}

\subsection{Regularity of SRBM}\label{sec:regularity-SRBM}

This subsection proves \cref{thm:regularity-SRBM}, the stochastic regularity
statement used in \cref{prop:resolvent-regularity-projected}.  More specifically, for
$g=R_\lambda h$, we will need the reflected semigroup on $C_0(E)$, a
synchronous Lipschitz estimate for paths driven by the same Brownian motion, and
a pathwise derivative with respect to the initial condition.  The derivative
statement is the key boundary input: at a boundary point, the initial
perturbation is projected onto the active tangent space, and the active
reflection directions are killed by this projection.  This is the stochastic
origin of the oblique flatness used in the one-sided smoothing argument.

These properties follow from the reflected-diffusion results in
\cite{LipshutzRamanan2019,LipshutzRamanan2021}.  Those results are formulated for
simple polyhedral domains with normalized reflection directions.  Our SRBM is
the constant-coefficient orthant case of that framework, after a harmless
normalization of the reflection directions.  Set
\[
        \Delta=\diag(R_{11},\ldots,R_{dd}),\qquad
        \widehat R=R\Delta^{-1},\qquad
        d_i=\widehat R_i=\frac{R_i}{R_{ii}} .
\]
Then $\langle d_i,e_i\rangle=1$.  Replacing $R_i$ by the positive multiple
$d_i=R_i/R_{ii}$ only rescales the $i$th regulator coordinate and leaves the
reflected path unchanged.  Therefore pathwise statements proved for the
normalized matrix $\widehat R$ apply to the original BAR normalization $R$.

\Cref{thm:regularity-SRBM} records the regularity consequences needed for the
proof of \cref{prop:resolvent-regularity-projected}.  Its proof first places the
present SRBM into the notation of \cite{LipshutzRamanan2019,LipshutzRamanan2021},
then verifies the relevant hypotheses under
\eqref{eq:Mmatrix}--\eqref{eq:stability}, and finally applies the corresponding
existence, Lipschitz, and derivative results of \cite{LipshutzRamanan2019,LipshutzRamanan2021}.

\begin{theorem}[Regularity of SRBM]\label{thm:regularity-SRBM}
Under the standing assumptions \eqref{eq:Mmatrix}--\eqref{eq:stability}, the
following hold.
\begin{enumerate}[label=\textup{(\roman*)}]
\item For each $x\in E$ and each prescribed Brownian motion there is a pathwise
unique SRBM $Z^x$, and $Z^x$ is strong Markov.

\item The semigroup $(P_t)$ maps $C_0(E)$ into itself and is strongly continuous
there.

\item There is a constant $K_\Gamma<\infty$, depending only on the normalized
reflection data, such that synchronous solutions satisfy, for all $x,y\in E$ and
$t\ge0$,
\begin{equation}\label{eq:pathwise-Lipschitz}
        \sup_{0\le s\le t}\abs{Z_s^x-Z_s^y}
        \le K_\Gamma\abs{x-y}
        \qquad\text{almost surely.}
\end{equation}

\item For every $x\in E$ there is an adapted RCLL derivative process
$\mathsf J_t^x\in\operatorname{Lin}(H_x,\R^d)$, $t\ge0$.  For each fixed
$w\in G_x$, on an event of probability one the derivative
\[
        \partial_w Z_t^x
        :=
        \lim_{\varepsilon\downarrow0}
        \frac{Z_t^{x+\varepsilon w}-Z_t^x}{\varepsilon}
\]
exists for every $t\ge0$.  Moreover, for every fixed $t>0$,
\begin{equation}\label{eq:fixed-time-derivative}
        \partial_w Z_t^x
        =
        \mathsf J_t^x[\mathsf L_xw]
        \qquad\text{almost surely.}
\end{equation}

\item For every $x\in E$ and every fixed $u\in H_x$,
\[
        \abs{\mathsf J_t^x[u]}
        \le
        K_\Gamma\abs{u}
        \qquad
        \text{for }dt\otimes\mathbb P\text{-almost every }(t,\omega).
\]
\end{enumerate}
\end{theorem}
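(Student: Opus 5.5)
The proof is a transfer argument: place the SRBM inside the Lipshutz--Ramanan framework, verify their hypotheses, and read off (i)--(v). \textbf{Embedding.} By \cref{lem:Mmatrix}, after the normalization $d_i=R_i/R_{ii}$ the data have Harrison--Reiman form $\widehat R=I-P^T$ with $P\ge0$ and $\rho(P)<1$. The domain $E=\bigcap_i\{\langle e_i,x\rangle\ge0\}$ is a simple polyhedron: the normals $e_i$ are linearly independent, so every active set $I(x)$ is admissible. The reflection directions satisfy $\langle d_i,e_i\rangle=1$. The drift and diffusion coefficients are constant, hence trivially Lipschitz and smooth, and $\Sigma$ is nondegenerate.

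\textbf{Lipschitz continuity of the Skorokhod map.} The essential hypothesis is Lipschitz continuity of the extended Skorokhod map. For Harrison--Reiman data this is classical (Harrison--Reiman; Dupuis--Ishii). I would verify it explicitly with the Dupuis--Ishii convex set $B$. One takes the weighted $\ell_\infty$ ball $\{|x_i|\le a_i\}$, where $a$ is chosen so that $a>0$ and $a>P^Ta$. Such an $a$ exists because $\rho(P)<1$; for instance $a=(I-P^T)^{-1}\mathbf 1$. One then checks the inward-normal condition $\langle \nu,d_i\rangle=0$ at points of $\partial B$ with $|x_i|=a_i$.

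This yields a constant $K_\Gamma$ such that $\|\Gamma(\psi)-\Gamma(\psi')\|_t\le K_\Gamma\|\psi-\psi'\|_t$. Synchronous solutions $Z^x$ and $Z^y$ are $\Gamma$ applied to $x+\mu t+\Sigma^{1/2}W_t$ and $y+\mu t+\Sigma^{1/2}W_t$, because the coefficients are constant. Hence (iii) holds with no Gronwall step. Pathwise uniqueness and strong existence in (i) follow because $Z^x=\Gamma(x+\mu\cdot+\Sigma^{1/2}W)$ is a measurable functional of the driving path. The strong Markov property comes from the flow property of $\Gamma$ together with strong Markov for $W$.

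For (ii), take $h\in C_0(E)$. By (iii), $P_th$ is continuous and $|P_th(x)-P_th(y)|\le\mathbb E\,\omega_h(K_\Gamma|x-y|)$, where $\omega_h$ is a modulus of continuity of $h$. Vanishing at infinity uses $|Z^x_t-x|\le K_\Gamma\sup_{s\le t}|\mu s+\Sigma^{1/2}W_s|$, which is obtained by comparing $Z^x$ with the path $\Gamma$ applied to the constant $x$, namely $x$ itself. Strong continuity then follows from uniform continuity of $h$ and $\mathbb E\sup_{s\le t}|Z^x_s-x|\to0$ uniformly in $x$.

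\textbf{Derivatives.} For (iv) and (v) I would invoke the pathwise differentiability theorem of \cite{LipshutzRamanan2019}, in the form for initial-condition perturbations in \cite{LipshutzRamanan2021}. This theorem requires, beyond the above, that almost surely the SRBM never hits the nonsmooth parts of the boundary ($\{x:|I(x)|\ge2\}$) at positive times, or some weaker substitute condition.

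This is the main obstacle. For orthant SRBM, Reiman--Williams and Varadhan--Williams type results give non-attainment of intersections of faces when a skew-symmetry-like condition holds, but not in general. I would first check the exact form of the Lipshutz--Ramanan hypothesis. I recall their 2021 result handles corners through the directional derivative of the extended Skorokhod map and the derivative problem with the projection $\mathsf L_x$ at time $0$, needing only that the set of times spent at corners has Lebesgue measure zero (true since $\Sigma$ is nondegenerate and corners have codimension $\ge2$), plus a boundary jitter condition that holds for nondegenerate diffusions. I would verify these conditions for constant nondegenerate $\Sigma$ using the occupation-density argument.

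Granting the theorem, the derivative process solves the derivative problem driven by the tangent space at time $0$. This gives $\partial_wZ^x_t=\mathsf J^x_t[\mathsf L_xw]$, with $\mathsf L_x$ identified through the characterization in \cref{lem:Mmatrix} (projection onto $H_x$ along $\operatorname{span}\{R_i:i\in I(x)\}$).

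Finally, (v) follows from (iii). For $u=\mathsf L_xw$ with $w\in G_x$, divide the Lipschitz bound by $\varepsilon$ and let $\varepsilon\downarrow0$ to get $|\partial_wZ^x_t|\le K_\Gamma|w|$. Since $\mathsf L_x$ is the identity on $H_x$ and $H_x\subset G_x$, taking $w=u$ gives $|\mathsf J^x_t[u]|\le K_\Gamma|u|$ for almost every $t$.
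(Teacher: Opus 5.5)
Your arguments for (i)--(iii) and (v) are sound and parallel the paper's. The paper takes well-posedness and the Lipschitz bound from conclusions (C1)--(C2) of \cref{prop:LR-input} rather than from a Dupuis--Ishii set.

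If you do build that set, the one you wrote is wrong. For the box $\{\abs{x_j}\le a_j\}$ the face normals satisfy $\langle e_j,d_i\rangle=-P_{ij}\ne0$ for $j\ne i$. The correct set is $B=\{x:\abs{(\widehat R^{-1}x)_j}\le a_j,\ j\in J\}$. Its face normals $\widehat R^{-T}e_j$ are orthogonal to $d_i$ for $j\ne i$. With your choice $a=(I-P^T)^{-1}\mathbf 1$, a point on the $i$th face of $B$ has $\abs{x_i}\ge(a-P^Ta)_i=1$. So boundary points with $\abs{x_i}<1$ only have normals orthogonal to $d_i$, and the Dupuis--Ishii condition holds. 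This slip is repairable. The genuine gap is (iv), which you leave as ``granting the theorem'' without identifying its hypotheses.

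Two things are missing there.

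\textbf{Corner starting points.} The Lipshutz--Ramanan derivative theorem produces $\mathsf J^x$ and $\partial_wZ^x$ only for starting points $x$ outside an exceptional set $\mathcal W$. That is where the nonsmooth boundary actually enters. It is not handled by non-attainment of corners, which you rightly note fails in general, nor by zero occupation time of corners. What is needed is $\mathcal W=\varnothing$. \cite[Lemma~3.9]{LipshutzRamanan2019} gives this under linear independence of the active directions $\{d_i:i\in I(x)\}$ at every $x$ (their Condition~2.10). Here that condition is exactly the nonsingularity of every principal block $R_{AA}$ from \cref{lem:Mmatrix}, and it is what makes corner starting points admissible. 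Boundary jitter then follows from uniform ellipticity \cite[Theorem~3.3]{LipshutzRamanan2019}, as you suspected.

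\textbf{Fixed-time identification.} The theorem identifies $\mathsf J_t^x[\mathsf L_xw]$ with the right-continuous regularization $\lim_{s\downarrow t}\partial_wZ_s^x$, not with $\partial_wZ_t^x$ itself, which may jump at boundary times. So your step ``this gives $\partial_wZ_t^x=\mathsf J_t^x[\mathsf L_xw]$'' does not follow. The paper closes it by combining two facts:
\begin{enumerate}
\item $s\mapsto\partial_wZ_s^x$ is continuous at every $t>0$ with $Z_t^x\in E^\circ$;
\item $\mathbb P(Z_t^x\in E^\circ)=1$ for each fixed $t>0$ \cite[Lemma~4.13]{LipshutzRamanan2019}.
\end{enumerate}
Zero occupation time of $\partial E$ alone would give the identity only for Lebesgue-a.e.\ $t$. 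That would suffice for (v) and for the later Fubini step, but not for the fixed-time claim in (iv).
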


To prove \cref{thm:regularity-SRBM}, we use the following results for reflected
diffusions in simple polyhedra \cite{LipshutzRamanan2019,LipshutzRamanan2021}.  The general framework of \cite{LipshutzRamanan2019,LipshutzRamanan2021} is a more flexible version
of the same reflected-diffusion equation: it allows a simple polyhedral domain,
normalized face directions, and parameter-dependent coefficients.  Our orthant
SRBM is obtained from that framework by taking constant coefficients and
normalized columns $d_i=R_i/R_{ii}$; the only difference from the BAR notation is
the harmless positive rescaling of the regulator coordinates.

Recall that \cite{LipshutzRamanan2019,LipshutzRamanan2021} use the following notation for the general SRBM framework, where we specialized the notation to the spatially homogeneous case.  Let parameters $\alpha\in U$, where the parameter family
$U$ is open, and let
\[
        G=\bigcap_{i\in J}\{x:\langle x,n_i\rangle\ge c_i\},
        \qquad J=\{1,\ldots,d\},
\]
be a minimally represented simple polyhedron with unit inward normals $n_i$,
faces $F_i=\{x\in G:\langle x,n_i\rangle=c_i\}$, and active set
$I_G(x)=\{i:x\in F_i\}$.  The normalized reflection directions satisfy
$\langle d_i(\alpha),n_i\rangle=1$.  In the spatially-homogeneous-coefficient specialization we care about, the family of reflected diffusions parameterized by $\alpha$ is written as
\[
        Z_t^{\alpha,x}
        =
        x+b(\alpha)t+\sigma(\alpha)W_t
        +\sum_{i\in J}d_i(\alpha)Y_i^{\alpha,x}(t),
\]
where each $Y_i^{\alpha,x}$ is continuous, nondecreasing, starts from zero, and
increases only when $Z^{\alpha,x}\in F_i$.  Put
$a(\alpha)=\sigma(\alpha)\sigma(\alpha)^T$,
$N=(n_1,\ldots,n_d)$, and
$\mathcal D(\alpha)=(d_1(\alpha),\ldots,d_d(\alpha))$.  For $x\in G$, define
$C_G(x)=\{w:\langle w,n_i\rangle\ge0,\ i\in I_G(x)\}$ and
$H_G(x)=\{v:\langle v,n_i\rangle=0,\ i\in I_G(x)\}$.  In the orthant
specialization, $G=E$, $n_i=e_i$, $C_G(x)=G_x$, and $H_G(x)=H_x$. Then, \cite{LipshutzRamanan2019, LipshutzRamanan2021} gives the following proposition.

\begin{proposition}[Reflected diffusions in simple polyhedra]\label{prop:LR-input}
In the setting just described, fix $\alpha\in U$.  Assume the following
hypotheses.
\begin{enumerate}[label=\textup{(A\arabic*)},leftmargin=2.1em]
\item $G$ is minimally represented and simple, $U$ is open, and
$\alpha\mapsto d_i(\alpha)$, $b(\alpha)$, and $\sigma(\alpha)$ are $C^1$ with
bounded first derivatives and local H\"older regularity.

\item $a(\alpha)$ is uniformly elliptic: $v^Ta(\alpha)v\ge\theta|v|^2$ for some
$\theta>0$ and all $v\in\R^d$.

\item $N^T\mathcal D(\alpha)$ is a nonsingular $M$-matrix.

\item The reflection matrix is constant in the parameter, or more generally
$\partial_\alpha\mathcal D(\alpha)$ is bounded.
\end{enumerate}

Then the following conclusions are available under the assumptions indicated.

\begin{enumerate}[label=\textup{(C\arabic*)},leftmargin=2.1em]
\item \emph{(Well posedness; uses \textup{(A1)} and \textup{(A3)},
\cite[Theorem~2.8]{LipshutzRamanan2021}.)}
For each $x\in G$ and each prescribed Brownian motion, there is a pathwise
unique reflected diffusion $Z^{\alpha,x}$, and it is strong Markov.

\item \emph{(Lipschitz extended Skorokhod map; uses \textup{(A1)} and
\textup{(A3)}, \cite[Proposition~2.6]{LipshutzRamanan2019}.)}
The extended Skorokhod problem associated with $(G,d_i(\alpha))$ is well posed,
and its extended Skorokhod map $\bar\Gamma^\alpha$ is Lipschitz on compact time
intervals: $\sup_{s\le t}\abs{\bar\Gamma^\alpha(f)(s)-\bar\Gamma^\alpha(g)(s)}
\le K_\Gamma\sup_{s\le t}\abs{f(s)-g(s)}$ for some $K_\Gamma<\infty$, all
continuous inputs $f,g$, and all $t\ge0$.

\item \emph{(Boundary jitter; uses \textup{(A2)} in the above setting,
\cite[Theorem~3.3]{LipshutzRamanan2019}.)}
Uniform ellipticity implies the boundary jitter property required for the
pathwise derivative theorem.

\item \emph{(Derivative projection; uses \textup{(A1)} and \textup{(A3)},
\cite[Lemma~3.11]{LipshutzRamanan2019}.)}
For each $x\in G$ there is a unique linear projection
$\mathcal L_x^\alpha:\R^d\to H_G(x)$ such that
$\mathcal L_x^\alpha v-v\in
\operatorname{span}\{d_i(\alpha):i\in I_G(x)\}$ for every $v\in\R^d$.

\item \emph{(Pathwise differentiability; uses \textup{(A1)}--\textup{(A4)} and
\textup{(C3)}--\textup{(C4)}, \cite[Theorem~3.13 and Corollary~3.15]{LipshutzRamanan2019}.)}
For this theorem, note that \textup{(A3)} implies Condition~2.10 of
\cite{LipshutzRamanan2019}: if $A\subset J$ and
$\sum_{i\in A}c_i d_i(\alpha)=0$, then
$(N^T\mathcal D(\alpha))_{AA}c_A=0$, and every principal submatrix of a
nonsingular $M$-matrix is nonsingular, so $c_A=0$.  Hence
\cite[Lemma~3.9]{LipshutzRamanan2019} gives the exceptional set of this theorem $\mathcal W^\alpha=\varnothing$.  Therefore,
for each $x\in G\backslash \mathcal W^\alpha=G$ there is an adapted RCLL derivative process
$J_t^{\alpha,x}\in\operatorname{Lin}(H_G(x),\R^d)$.  For every fixed
$w\in C_G(x)$, we have almost surely,
$\partial_w Z_t^{\alpha,x}:=\lim_{\varepsilon\downarrow0}
\varepsilon^{-1}(Z_t^{\alpha,x+\varepsilon w}-Z_t^{\alpha,x})$ exists for every
$t\ge0$, is continuous at every
$t>0$ such that $Z_t^{\alpha,x}\in G^\circ$, and its right-continuous regularization satisfies
$\lim_{s\downarrow t}\partial_w Z_s^{\alpha,x}
=J_t^{\alpha,x}[\mathcal L_x^\alpha w]$ for all $t\ge0$.

\item \emph{(Fixed-time interior statement; uses \textup{(A1)}--\textup{(A4)}
and \textup{(C3)}--\textup{(C4)}, \cite[Lemma~4.13]{LipshutzRamanan2019}.)}
For every fixed $t>0$, $\mathbb P(Z_t^{\alpha,x}\in G^\circ)=1$.
\end{enumerate}
\end{proposition}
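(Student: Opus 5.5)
The proposition is essentially a transcription of results from \cite{LipshutzRamanan2019,LipshutzRamanan2021} into the spatially homogeneous, constant-reflection setting. My plan is therefore to go through (C1)--(C6) one at a time. For each, I would recall the precise hypotheses of the cited theorem and check that they are implied by the listed assumptions (A1)--(A4). In the constant-coefficient specialization, (A1) and (A4) are trivial: the maps $\alpha\mapsto b(\alpha),\sigma(\alpha),d_i(\alpha)$ may be taken constant on an open $U$, so all derivatives vanish and Hölder continuity is automatic. The substantive checks therefore concern (A2), (A3) and the structural conditions used inside the cited proofs.

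The order I would follow is this.
\begin{enumerate}
\item For (C2), I would note that (A3) makes $N^T\mathcal D$ a nonsingular $M$-matrix. In the orthant this is exactly the Harrison--Reiman condition $\widehat R=I-P^T$ with $\rho(P)<1$ from \cref{lem:Mmatrix}. This gives the standard Lipschitz bound for the (extended) Skorokhod map, with a constant independent of $t$, which is what \cite[Proposition~2.6]{LipshutzRamanan2019} records.
\item Pathwise uniqueness and existence in (C1) then follow from Lipschitz continuity of the map composed with the free process $x+bt+\sigma W_t$. The strong Markov property follows from pathwise uniqueness by the usual Yamada--Watanabe-type argument, as in \cite[Theorem~2.8]{LipshutzRamanan2021}.
\item For (C4), I would give the direct linear-algebra argument rather than rely only on the citation. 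Writing $v-\sum_{i\in A}a_id_i\in H_G(x)$ with $A=I_G(x)$ reduces to $(N^T\mathcal D)_{AA}a=N_A^Tv$. This system is uniquely solvable because principal blocks of a nonsingular $M$-matrix are nonsingular, exactly as in \cref{lem:projection}.
\item For (C3), uniform ellipticity (A2) is the only input to the boundary jitter theorem of \cite[Theorem~3.3]{LipshutzRamanan2019}. I would check that their additional geometric requirements (a simple polyhedron, and linearly independent directions on every nonempty stratum) hold in the orthant with the normalized $d_i$.
\end{enumerate}

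For (C5) and (C6), the key verification is Condition~2.10 of \cite{LipshutzRamanan2019}, carried out by the principal-block argument already indicated in the statement. Through \cite[Lemma~3.9]{LipshutzRamanan2019}, this yields an empty exceptional set, so \cite[Theorem~3.13, Corollary~3.15]{LipshutzRamanan2019} apply at every $x\in G$, including corners. The fixed-time statement (C6) is then \cite[Lemma~4.13]{LipshutzRamanan2019}. An independent sanity check is available: the free process has a nondegenerate Gaussian law, and the boundary has Lebesgue measure zero. I would expect this to transfer to $Z_t$ via Girsanov together with the absence of sticky behaviour, which boundary jitter guarantees.

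I expect the main obstacle to be bookkeeping rather than mathematics. The cited derivative theorem is stated for the derivative of the right-continuous regularization, and with hypotheses phrased for general state-dependent coefficients (their conditions on $\mathcal W$, on $\mathcal V$-sets, and on boundary jitter). I must make sure that each condition is literally implied in our setting and that the conclusion is not overstated. In particular, convergence of $\varepsilon^{-1}(Z^{x+\varepsilon w}-Z^x)$ to $J_t[\mathcal L_xw]$ should be claimed only for the regularization, or at interior times. I would first try to read off the exact statements of Theorem~3.13 and Corollary~3.15 and match them term by term. Where a hypothesis concerns only nonconstant coefficients, I would note that it holds vacuously.
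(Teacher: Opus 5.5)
Your plan is correct and is essentially the paper's own approach: the paper treats \cref{prop:LR-input} as a direct restatement of the Lipshutz--Ramanan results, matching each of (C1)--(C6) to its source (with the $M$-matrix condition (A3) entering their standing assumptions through \cite[Lemma~2.6 and Remark~2.7]{LipshutzRamanan2021}). Your direct arguments for (C1), via $Z^{\alpha,x}=\bar\Gamma^\alpha(x+b(\alpha)\,\cdot+\sigma(\alpha)W)$, and for (C4), via \cref{lem:projection}, are consistent with this, with two small caveats: your Harrison--Reiman reduction for (C2) covers only the orthant, since for a general simple polyhedron the link from (A3) to a Lipschitz extended Skorokhod map is exactly that cited lemma; and the ingredients of your Girsanov heuristic for (C6), namely drift removal plus zero boundary occupation time, only yield $\mathbb P(Z_t^{\alpha,x}\in\partial G)=0$ for Lebesgue-a.e.\ $t$, so the every-fixed-$t$ statement genuinely rests on \cite[Lemma~4.13]{LipshutzRamanan2019}.
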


\begin{comment}
    \begin{proof}[Proof of Proposition \ref{prop:LR-input} in \cite{LipshutzRamanan2019,LipshutzRamanan2021}]
The proposition restates the cited results in the notation above.  The
$M$-matrix sufficient condition in \textup{(A3)} is
\cite[Lemma~2.6 and Remark~2.7]{LipshutzRamanan2021}.  Conclusion
\textup{(C1)} is
\cite[Theorem~2.8]{LipshutzRamanan2021}.  The Lipschitz estimate
\textup{(C2)} is \cite[Proposition~2.6]{LipshutzRamanan2019}.  The boundary
jitter input and empty-exceptional-set criterion in \textup{(C3)} are
\cite[Theorem~3.3 and Lemma~3.9]{LipshutzRamanan2019}.  The projection in
\textup{(C4)} is \cite[Lemma~3.11]{LipshutzRamanan2019}.  The derivative
statement \textup{(C5)} is
\cite[Theorem~3.13 and Corollary~3.15]{LipshutzRamanan2019}.  The fixed-time
interior statement \textup{(C6)} is
\cite[Lemma~4.13]{LipshutzRamanan2019}.
\end{proof}

\end{comment}
\begin{proof}[Proof of \cref{thm:regularity-SRBM}]
We apply \cref{prop:LR-input} to the constant parameter family
\[
        G=E,\quad
        n_i=e_i,\quad
        c_i=0,\quad
        d_i(\alpha)=d_i=\frac{R_i}{R_{ii}},\quad
        b(\alpha)=\mu,\quad
        \sigma(\alpha)=\Sigma^{1/2},
        \quad \alpha\in U=(-1,1).
\]
Thus $\mathcal D(\alpha)=\widehat R$ and $N=I$. Here constant parameter family means that the domain, reflection directions, drift, and dispersion do
not depend on $\alpha$.  Now we verify that the Harrison-Reiman Class satisfies all assumptions (A1)-(A4).

\smallskip
\noindent\emph{Verification of \textup{(A1)}.}
The orthant is the minimally represented simple cone
$E=\bigcap_i\{x:\langle x,e_i\rangle\ge0\}$.  Simplicity follows because the
coordinate normals are linearly independent on every active set.  Minimality
follows because, if the $i$th half-space is removed, then the point $-e_i$
satisfies all remaining half-space inequalities but does not belong to $E$.  The
parameter set $U=(-1,1)$ is open.  The maps $d_i(\alpha)$, $b(\alpha)$, and
$\sigma(\alpha)$ are constant, hence $C^1$; all first derivatives are zero, and
therefore bounded and locally H\"older.  The normalization
$\langle d_i,e_i\rangle=1$ holds by definition.  This verifies \textup{(A1)}.

\smallskip
\noindent\emph{Verification of \textup{(A2)}.}
Here $a(\alpha)=\Sigma$.  Since $\Sigma$ is symmetric positive definite,
\[
        v^T\Sigma v
        \ge
        \lambda_{\min}(\Sigma)|v|^2,
        \qquad v\in\R^d .
\]
Thus \textup{(A2)} holds with $\theta=\lambda_{\min}(\Sigma)>0$.

\smallskip
\smallskip
\noindent\emph{Verification of \textup{(A3)}.}
Here $N=I$, $\mathcal D(\alpha)=\widehat R=I-P^T$, and hence
$N^T\mathcal D(\alpha)=\widehat R$.  By \cref{lem:Mmatrix}, $\widehat R$ is a
nonsingular $M$-matrix.  This verifies \textup{(A3)}.

\smallskip
\noindent\emph{Verification of \textup{(A4)}.}
The reflection matrix $\mathcal D(\alpha)=\widehat R$ is constant in $\alpha$, so
$\partial_\alpha\mathcal D(\alpha)=0$.  This verifies \textup{(A4)}.

All assumptions \textup{(A1)}--\textup{(A4)} of \cref{prop:LR-input} have now
been verified.

Conclusion \textup{(C1)} gives pathwise existence, uniqueness, and the strong
Markov property for the normalized reflected diffusion.  Since
$R_i=R_{ii}d_i$ with $R_{ii}>0$, replacing the normalized local time by the
correspondingly rescaled regulator leaves the reflected path unchanged.  Hence
the same pathwise existence, uniqueness, and strong Markov conclusions also hold
for the original BAR normalization $R$.  This proves assertion \textup{(i)}.

For assertion \textup{(iii)}, let the two processes start from $x$ and $y$ and
be driven by the same Brownian path.  Their free inputs are
$f_x(s)=x+\mu s+\Sigma^{1/2}W_s$ and
$f_y(s)=y+\mu s+\Sigma^{1/2}W_s$, so
$\sup_{s\le t}|f_x(s)-f_y(s)|=|x-y|$.  Applying conclusion \textup{(C2)} gives
\eqref{eq:pathwise-Lipschitz}, proving assertion \textup{(iii)}.

Assertion \textup{(ii)} follows from \eqref{eq:pathwise-Lipschitz} and Brownian
continuity.  Put $X_t=\mu t+\Sigma^{1/2}W_t$.  Comparing the input $x+X$ with
the constant input $x$ gives
\begin{equation}\label{eq:increment-bound}
        \sup_{s\le t}\abs{Z_s^x-x}
        \le
        K_\Gamma\sup_{s\le t}\abs{X_s}
        \qquad\text{almost surely.}
\end{equation}
If $h\in C_0(E)$, then $h$ is uniformly continuous.  Hence
\eqref{eq:pathwise-Lipschitz} gives continuity of $x\mapsto P_th(x)$.  If $h$ is
supported in the ball of radius $r$, then
\[
        |P_th(x)|
        \le
        \|h\|_\infty
        \mathbb P\!\left(
             K_\Gamma\sup_{s\le t}|X_s|\ge |x|-r
        \right),
\]
which tends to zero as $|x|\to\infty$; approximation by compactly supported
functions gives $P_tC_0(E)\subset C_0(E)$.  Finally, if $\omega_h$ is the modulus
of continuity of $h$, then \eqref{eq:increment-bound} gives
\[
        \sup_{x\in E}|P_th(x)-h(x)|
        \le
        \mathbb E\,
        \omega_h\!\left(K_\Gamma\sup_{s\le t}|X_s|\right)
        \longrightarrow0
        \qquad (t\downarrow0)
\]
by bounded convergence.  Thus $(P_t)$ is strongly continuous on $C_0(E)$.  This
proves assertion \textup{(ii)}.

We now prove assertions \textup{(iv)}--\textup{(v)}.  By conclusion
\textup{(C4)}, the derivative projection at $x$ is the unique linear map onto
$H_x$ whose difference from the identity lies in
$\operatorname{span}\{d_i:i\in I(x)\}$.  Since
$\operatorname{span}\{d_i:i\in A\}=\operatorname{span}\{R_i:i\in A\}$ for every
$A\subset J$, uniqueness and \cref{lem:projection} identify this projection with
$\mathsf L_x$.

Conclusion \textup{(C5)}, applied to the constant parameter family and with
parameter direction equal to zero, gives the derivative process $\mathsf J_t^x$
and the directional derivative $\partial_w Z_t^x$ for every fixed $w\in G_x$.
It also gives continuity of $s\mapsto\partial_wZ_s^x$ at every $t>0$ such that
$Z_t^x\in E^\circ$, together with the projected right-continuous
regularization.

Conclusion \textup{(C6)} gives $\mathbb P(Z_t^x\in E^\circ)=1$ for every fixed
$t>0$.  Therefore, for every fixed $w\in G_x$ and $t>0$, on an event of
probability one,
\[
        \partial_w Z_t^x
        =
        \lim_{s\downarrow t}\partial_w Z_s^x
        =
        \mathsf J_t^x[\mathsf L_xw].
\]
This proves assertion \textup{(iv)}.

For assertion \textup{(v)}, fix $u\in H_x$.  Then $u\in G_x$ and
$\mathsf L_xu=u$.  For small $\varepsilon>0$, $x+\varepsilon u\in E$.  Applying
\eqref{eq:pathwise-Lipschitz} with $y=x+\varepsilon u$, dividing by
$\varepsilon$, and letting $\varepsilon\downarrow0$ gives
$\abs{\partial_u Z_t^x}\le K_\Gamma\abs{u}$ on the event where the directional
derivative exists for all $t\ge0$.  Combining this bound with
\eqref{eq:fixed-time-derivative} gives
$\abs{\mathsf J_t^x[u]}\le K_\Gamma\abs{u}$ for every fixed $t>0$, almost surely.
Since $\mathsf J^x$ is RCLL, Fubini gives the same bound for
$dt\otimes\mathbb P$-almost every $(t,\omega)$.  This proves assertion
\textup{(v)} and completes the proof.

\end{proof}

\subsection{The probabilistic resolvent and its boundary directional derivative}\label{sec:resolvent}

The purpose of this subsection is to convert the pathwise derivative package into a boundary identity for the probabilistic resolvent. We prove only that the resolvent is a classical solution of the resolvent equation in the interior, then differentiate the time integral in feasible directions. The resulting boundary derivative is an algebraic linear functional; no classical gradient at a corner is assumed.

Fix $\lambda>0$ and $h\in C_c^\infty(\R^d)$. We regard $h$ as a function on $E$ and define
\begin{equation}\label{eq:resolvent}
 g(x)=R_\lambda h(x)
 :=\mathbb E\left[\int_0^\infty e^{-\lambda t}h(Z_t^x)\,dt\right].
\end{equation}

\begin{lemma}[Boundedness and Lipschitz continuity]\label{lem:resolvent-Lipschitz}
The function $g$ is bounded and globally Lipschitz on $E$, with
\begin{equation}\label{eq:resolvent-bounds}
 \norm{g}_\infty\le\frac{\norm{h}_\infty}{\lambda},
 \qquad
 \Lip(g)\le\frac{K_\Gamma\Lip(h)}{\lambda}.
\end{equation}
\end{lemma}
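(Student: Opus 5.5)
The plan is to read both bounds off the definition \eqref{eq:resolvent}, using only the synchronous coupling in \cref{thm:regularity-SRBM}. Measurability of $(t,\omega)\mapsto h(Z_t^x)$ comes from continuity of paths. Since $h$ is bounded, Fubini lets us write $g(x)=\int_0^\infty e^{-\lambda t}P_th(x)\,dt$, and every integral below converges absolutely.

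\emph{Sup bound.} Because $|h(Z_t^x)|\le\norm{h}_\infty$ for all $t$ and $\omega$, we get
\[
 |g(x)|\le\norm{h}_\infty\int_0^\infty e^{-\lambda t}\,dt=\frac{\norm{h}_\infty}{\lambda}
\]
for every $x\in E$.

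\emph{Lipschitz bound.} Fix $x,y\in E$ and take the two processes $Z^x$ and $Z^y$ driven by the same Brownian motion. Pathwise uniqueness, \cref{thm:regularity-SRBM}(i), guarantees that each coupled process has the law of the SRBM started at its own initial point, so both $g(x)$ and $g(y)$ may be computed on this common probability space. Since $h\in C_c^\infty(\R^d)$, it is globally Lipschitz. Combining this with \eqref{eq:pathwise-Lipschitz} gives, almost surely and for every $t\ge0$,
\[
 |h(Z_t^x)-h(Z_t^y)|\le\Lip(h)\,|Z_t^x-Z_t^y|\le K_\Gamma\Lip(h)\,|x-y| .
\]
Integrating against $e^{-\lambda t}\,dt$ and then taking expectations yields $|g(x)-g(y)|\le K_\Gamma\Lip(h)|x-y|/\lambda$.

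The only point needing care is the coupling step. We must make sure that $P_th(x)$ does not depend on which Brownian motion is used, and this follows from pathwise uniqueness via Yamada--Watanabe-type uniqueness in law. We also need the almost-sure bound \eqref{eq:pathwise-Lipschitz} to hold simultaneously for all $t$, which it does because it is stated with $\sup_{s\le t}$ and is monotone in $t$. Beyond that the argument is routine.
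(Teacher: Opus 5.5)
Your proposal is correct and follows the paper's proof: the sup bound read off from the definition, and the Lipschitz bound from the synchronous coupling, the pathwise estimate \eqref{eq:pathwise-Lipschitz}, and integration against $e^{-\lambda t}\,dt$. Your remark about uniqueness in law is harmless, but you do not need Yamada--Watanabe: $Z^x$ is a deterministic Skorokhod-map functional of the driving path, so its law cannot depend on which Brownian motion is used.
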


\begin{proof}
The first estimate follows immediately from \eqref{eq:resolvent}. For the second, couple $Z^x$ and $Z^y$ with the same Brownian path. By \eqref{eq:pathwise-Lipschitz},
\[
 \abs{h(Z_t^x)-h(Z_t^y)}
 \le \Lip(h)K_\Gamma\abs{x-y}.
\]
Integrating against $e^{-\lambda t}\,dt$ proves the claim.
\end{proof}

\begin{lemma}[Interior classical solution of the resolvent equation]\label{lem:interior-PDE}
The function $g$ is a classical solution of the resolvent equation in $E^\circ$; more precisely, $g\in C^\infty(E^\circ)$ and
\begin{equation}\label{eq:interior-PDE}
 (\lambda-L)g=h
 \qquad\text{in }E^\circ.
\end{equation}
\end{lemma}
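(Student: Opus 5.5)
The plan is a local argument. In a small ball inside $E^\circ$, $g$ coincides with the resolvent of the free (unreflected) diffusion with boundary data given by $g$ itself. Interior elliptic regularity then gives $C^\infty$, and $(\lambda-L)g=h$ follows.

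Fix $x_0\in E^\circ$ and a ball $B=B(x_0,r)$ with $\overline B\subset E^\circ$. Let $\tau=\inf\{t\ge0:Z_t^x\notin B\}$ for $x\in B$. Before $\tau$ the regulators $Y_i$ do not increase, since they increase only on $\partial E$. Hence on $[0,\tau]$ the process $Z^x$ agrees with the Brownian motion with drift $x+\mu t+\Sigma^{1/2}W_t$, and $\tau$ is its exit time from $B$, which is a.s. finite with all exponential moments. The strong Markov property from \cref{thm:regularity-SRBM}(i) gives
\[
 g(x)=\mathbb E\Big[\int_0^{\tau}e^{-\lambda t}h(Z_t^x)\,dt\Big]+\mathbb E\big[e^{-\lambda\tau}g(Z_\tau^x)\big],\qquad x\in B .
\]
Both expectations involve only the free diffusion killed at $\partial B$.

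Next we identify this representation with the solution of a Dirichlet problem. The operator $\lambda-L$ has constant coefficients, with $Q=\Sigma/2$ positive definite, so it is uniformly elliptic. The ball has a smooth boundary, $h\in C^\infty$, and the boundary datum $g|_{\partial B}$ is continuous because $g$ is Lipschitz by \cref{lem:resolvent-Lipschitz}. Classical Schauder theory therefore gives a unique $u\in C^\infty(B)\cap C(\overline B)$ with $(\lambda-L)u=h$ in $B$ and $u=g$ on $\partial B$. The interior smoothness comes from hypoellipticity or Schauder bootstrapping, using smooth $h$ and constant coefficients.

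To show $u=g$ on $B$, apply It\^o's formula to $e^{-\lambda t}u(Z_t^x)$ up to $\tau_n$, the exit time of the shrunken ball $B(x_0,r-1/n)$, where $u$ is $C^2$ with bounded derivatives. Taking expectations kills the martingale term, and
\[
 u(x)=\mathbb E\Big[\int_0^{\tau_n}e^{-\lambda t}h\,dt\Big]+\mathbb E\big[e^{-\lambda\tau_n}u(Z^x_{\tau_n})\big].
\]
As $n\to\infty$, $\tau_n\uparrow\tau$. Continuity of $u$ on $\overline B$ and bounded convergence then give the Feynman--Kac representation, which coincides with the expression for $g$. Hence $g=u$ is $C^\infty$ near $x_0$ and satisfies $(\lambda-L)g=h$ there. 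Since $x_0$ was arbitrary, the lemma follows.

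The main obstacle is the justification that $Z^x$ coincides with the free process up to $\tau$. This needs only the support property of $Y_i$ from \eqref{eq:SRBM-decomposition}: on $[0,\tau)$ the path stays in $B\subset E^\circ$, so $dY_i=0$ there. Everything else is standard. If one wants to avoid citing Schauder theory with merely continuous boundary data, an alternative is to mollify the boundary datum, use uniform convergence via the maximum principle, and invoke interior estimates.
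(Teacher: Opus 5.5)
Your proposal is correct and follows essentially the same route as the paper: the strong-Markov representation of $g$ on a ball $B\Subset E^\circ$, identification with the solution of a Dirichlet problem for $\lambda-L$ via a Feynman--Kac argument, and interior elliptic regularity. The only difference is in how the continuous boundary datum is handled. You invoke Dirichlet solvability with continuous boundary data directly and justify It\^o's formula by exhausting $B$ with shrunken balls. The paper instead approximates $g|_{\partial B}$ uniformly by polynomials, solves the smooth-data problems by Schauder theory, compares the Feynman--Kac representations to get $\|u_n-g\|_{L^\infty(B)}\le\|\phi_n-g\|_{L^\infty(\partial B)}$, and passes to the limit with interior Schauder estimates, which is exactly the alternative you sketch at the end.
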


\begin{proof}
Fix concentric balls $B'\Subset B\Subset E^\circ$ and let $\tau_B$ be the first exit time from $B$. Before $\tau_B$, the reflected process is the unconstrained diffusion with generator $L$. The strong Markov property gives, for $x\in B$,
\begin{equation}\label{eq:stopped-resolvent}
 g(x)=\mathbb E_x\left[
 \int_0^{\tau_B}e^{-\lambda t}h(Z_t)\,dt
 +e^{-\lambda\tau_B}g(Z_{\tau_B})
 \right].
\end{equation}
Since \(g\in C(\overline B)\) and \(\partial B\) is compact, Stone--Weierstrass applied to the restrictions of polynomials on \(\mathbb R^d\) to \(\partial B\) gives polynomials \(p_n\) with \(\|p_n-g\|_{L^\infty(\partial B)}\to0\). Setting \(\varphi_n=p_n|_{\overline B}\), we have \(\varphi_n\in C^\infty(\overline B)\subset C^{2,\alpha}(\overline B)\) and \(\varphi_n\to g|_{\partial B}\) uniformly. To solve the interior Dirichlet problems we use the classical Schauder solvability theorem, which we recall in the form used.
\begin{theorem}[{\normalfont\cite[Theorem~6.14]{GilbargTrudinger2001}}]
Let $\Omega\subset\R^d$ be a bounded $C^{2,\alpha}$ domain and let $\mathcal A=a^{ij}D_{ij}+b^iD_i+c$ be strictly elliptic on $\Omega$, that is, $a^{ij}(y)\xi_i\xi_j\ge\theta_0\abs{\xi}^2$ for some $\theta_0>0$ and all $y\in\Omega$, $\xi\in\R^d$, with coefficients $a^{ij},b^i,c\in C^{\alpha}(\overline\Omega)$ and $c\le0$ on $\Omega$. Then for every $f\in C^{\alpha}(\overline\Omega)$ and every $\varphi\in C^{2,\alpha}(\overline\Omega)$ the Dirichlet problem $\mathcal Au=f$ in $\Omega$, $u=\varphi$ on $\partial\Omega$, has a unique solution $u\in C^{2,\alpha}(\overline\Omega)$.
\end{theorem}
We apply this with $\Omega=B$, $\mathcal A=L-\lambda$, so that in coordinates $a^{ij}=\tfrac12\Sigma_{ij}$, $b^i=\mu_i$, $c=-\lambda$, together with $f=-h$ and $\varphi=\phi_n$. The four hypotheses hold in the present setting:
\begin{enumerate}[label=\textup{(\alph*)}]
\item\label{it:gt-domain} $B$ is an open Euclidean ball, hence a $C^\infty$ and a fortiori $C^{2,\alpha}$ domain.
\item\label{it:gt-elliptic} For all $\xi\in\R^d$, $a^{ij}\xi_i\xi_j=\tfrac12\xi^\top\Sigma\xi\ge\tfrac12\lambda_{\min}(\Sigma)\abs{\xi}^2$, and $\lambda_{\min}(\Sigma)>0$ because $\Sigma$ is symmetric positive definite; thus $\mathcal A$ is strictly elliptic with $\theta_0=\tfrac12\lambda_{\min}(\Sigma)$.
\item\label{it:gt-coeff} The coefficients $\tfrac12\Sigma_{ij}$, $\mu_i$, $-\lambda$ are constants, hence lie in $C^{\alpha}(\overline B)$ with vanishing H\"older seminorm; and $c=-\lambda<0\le0$ since $\lambda>0$.
\item\label{it:gt-data} The source $f=-h\in C_c^\infty(\R^d)\subset C^{\alpha}(\overline B)$, and each $\varphi=\phi_n\in C^\infty(\overline B)\subset C^{2,\alpha}(\overline B)$.
\end{enumerate}
Therefore \cite[Theorem~6.14]{GilbargTrudinger2001} yields a unique $u_n\in C^{2,\alpha}(\overline B)$ satisfying
\[
 (\lambda-L)u_n=h\quad\text{in }B,
 \qquad u_n=\phi_n\quad\text{on }\partial B.
\]
Apply It\^o's formula to $e^{-\lambda(t\wedge\tau_B)}u_n(Z_{t\wedge\tau_B})$. Since $u_n$ and its first derivatives are bounded on $\overline B$, the stopped stochastic integral has mean zero. Letting $t\to\infty$ is justified by bounded convergence. Indeed, before $\tau_B$ the process is $x+\mu t+\Sigma^{1/2}W_t$; a nonzero one-dimensional projection is a Brownian motion with drift and exits the bounded projection of $B$ almost surely, so $\tau_B<\infty$ almost surely. This gives the Feynman--Kac representation
\[
 u_n(x)=\mathbb E_x\left[
 \int_0^{\tau_B}e^{-\lambda t}h(Z_t)\,dt
 +e^{-\lambda\tau_B}\phi_n(Z_{\tau_B})
 \right].
\]
Comparing it with \eqref{eq:stopped-resolvent} yields
\[
 \|u_n-g\|_{L^\infty(B)}
 \le \|\phi_n-g\|_{L^\infty(\partial B)}\longrightarrow0.
\]
For $n,m$, the difference $w=u_n-u_m\in C^{2,\alpha}(\overline B)$ solves the homogeneous equation $(\lambda-L)w=h-h=0$ in $B$. To pass to the limit we use the interior Schauder estimate, recalled in the form used.

Next, we use interior Schauder estiamte to prove $u_n$ is Cauchy in $C^{2,\alpha}(B')$:
\begin{theorem}[Interior Schauder estimate {\normalfont \cite[Theorem~6.2]{GilbargTrudinger2001}}]
Let $\mathcal A=a^{ij}D_{ij}+b^iD_i+c$ be strictly elliptic on a domain $\Omega\subset\R^d$ with ellipticity constant $\theta_0>0$ and coefficients bounded in $C^{\alpha}(\Omega)$ by a constant $\Theta$. If $u\in C^{2,\alpha}(\Omega)$ satisfies $\mathcal Au=f$ with $f\in C^{\alpha}(\Omega)$, then for every subdomain $\Omega'\Subset\Omega$,
\[
 \norm{u}_{C^{2,\alpha}(\overline{\Omega'})}
 \le C\bigl(\norm{u}_{L^\infty(\Omega)}+\norm{f}_{C^{\alpha}(\Omega)}\bigr),
 \qquad C=C\bigl(d,\alpha,\theta_0,\Theta,\operatorname{dist}(\Omega',\partial\Omega)\bigr).
\]
\end{theorem}
The operator $\mathcal A=L-\lambda$ satisfies these hypotheses with the ellipticity constant $\theta_0=\tfrac12\lambda_{\min}(\Sigma)$ of \ref{it:gt-elliptic} and the coefficient bound $\Theta=\max\{\tfrac12\norm{\Sigma},\ \abs{\mu},\ \lambda\}$, both independent of $n,m$. Applying it to $w=u_n-u_m$, with $\Omega=B$, $\Omega'=B'$, and $f\equiv0$, gives
\[
 \norm{u_n-u_m}_{C^{2,\alpha}(\overline{B'})}
 \le C_{B',B}\,\norm{u_n-u_m}_{L^\infty(B)},
 \qquad
 C_{B',B}=C\bigl(d,\alpha,\tfrac12\lambda_{\min}(\Sigma),\Theta,\operatorname{dist}(B',\partial B)\bigr),
\]
where the constant $C_{B',B}$ does not depend on $n,m$.
Because $u_n\to g$ uniformly on $B$, the right-hand side tends to zero as $n,m\to\infty$. Thus $(u_n)$ is Cauchy in $C^{2,\alpha}(B')$ and converges there to some $u\in C^{2,\alpha}(B')$. The same sequence converges uniformly to $g$ on $B$, so the $C^{2,\alpha}(B')$ limit must be $u=g$. Passing to the limit in the equations satisfied by the classical solutions $u_n$ gives $g\in C^{2,\alpha}(B')$ and \eqref{eq:interior-PDE} on $B'$. Since the coefficients of $L$ are constant and $h$ is smooth, standard interior elliptic regularity, equivalently the usual bootstrapping by interior estimates, gives $g\in C^\infty(B')$. Since $B'\Subset E^\circ$ was arbitrary, the conclusion follows.
\end{proof}

\begin{proposition}[Directional factorization of the resolvent]\label{prop:resolvent-directional}
Let \(x\in E\). There is a bounded linear functional
\(\Lambda_x:H_x\to \mathbb R\) such that, for every \(w\in G_x\), the one sided
directional derivative exists and satisfies
\begin{equation}
\partial_w^+ g(x)
:=
\lim_{\varepsilon\downarrow0}
\frac{g(x+\varepsilon w)-g(x)}{\varepsilon}
=
\Lambda_x(\mathsf L_xw).
\end{equation}
The linear functional $\ell_x(v):=\Lambda_x(\mathsf L_xv)$ is bounded by
\begin{equation}
|\ell_x(v)|
\le
\frac{K_\Gamma C_{\mathsf L}\|\nabla h\|_\infty}{\lambda}|v|
\end{equation}
for all $v\in \mathbb R^d$. If \(i\in I(x)\), then $\ell_x(R_i)=0.$ Note that no continuity or measurability of the map \(x\mapsto \ell_x\) is asserted.
\end{proposition}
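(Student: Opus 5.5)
The plan is to differentiate the resolvent under the expectation and time integral, using the pathwise derivative of \cref{thm:regularity-SRBM}(iv) together with the synchronous Lipschitz bound (iii) for domination. Fix $x\in E$ and $w\in G_x$. For small $\varepsilon>0$ we have $x+\varepsilon w\in E$. Coupling $Z^{x+\varepsilon w}$ and $Z^x$ with the same Brownian motion gives
\[
\frac{g(x+\varepsilon w)-g(x)}{\varepsilon}
=\mathbb E\int_0^\infty e^{-\lambda t}\,\frac{h(Z_t^{x+\varepsilon w})-h(Z_t^x)}{\varepsilon}\,dt .
\]
By \eqref{eq:pathwise-Lipschitz}, the integrand is bounded by $e^{-\lambda t}\|\nabla h\|_\infty K_\Gamma|w|$, which is integrable against $dt\otimes\mathbb P$. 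For each fixed $t$, on a probability-one event, \cref{thm:regularity-SRBM}(iv) gives $\varepsilon^{-1}(Z_t^{x+\varepsilon w}-Z_t^x)\to\partial_wZ_t^x$, and $h$ is $C^1$. Hence the difference quotient of $h$ converges to $\nabla h(Z_t^x)\cdot\partial_wZ_t^x$. Since the convergence event is for all $t$ simultaneously, the convergence actually holds $dt\otimes\mathbb P$-a.e. Dominated convergence then shows that $\partial_w^+g(x)$ exists and equals $\mathbb E\int_0^\infty e^{-\lambda t}\nabla h(Z_t^x)\cdot\partial_wZ_t^x\,dt$. This proves (iii).

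Next I would insert the fixed-time identity \eqref{eq:fixed-time-derivative}, $\partial_wZ_t^x=\mathsf J_t^x[\mathsf L_xw]$ a.s.\ for each $t>0$. By Fubini this holds $dt\otimes\mathbb P$-a.e., which is all the integral sees. Then I define, for $u\in H_x$,
\[
\Lambda_x(u):=\mathbb E\int_0^\infty e^{-\lambda t}\,\nabla h(Z_t^x)\cdot\mathsf J_t^x[u]\,dt .
\]
This is well defined because $\mathsf J_t^x$ is adapted and RCLL, hence jointly measurable. It is linear because $\mathsf J_t^x$ is linear. It is bounded with $|\Lambda_x(u)|\le K_\Gamma\|\nabla h\|_\infty|u|/\lambda$ by \cref{thm:regularity-SRBM}(v). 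One subtlety is that (v) gives the bound a.e.\ for each fixed $u$ separately. Since the integrand is linear in $u$, it suffices to apply the bound to a basis of $H_x$, and finite-dimensionality then gives a uniform constant, possibly at the cost of a dimensional factor. To get exactly $K_\Gamma$, I would instead argue directly that $|\Lambda_x(u)|\le\mathbb E\int e^{-\lambda t}\|\nabla h\|_\infty|\mathsf J^x_t[u]|\,dt$ for each $u$, which is fine. With this definition, $\partial_w^+g(x)=\Lambda_x(\mathsf L_xw)$ for all $w\in G_x$.

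Finally, set $\ell_x=\Lambda_x\circ\mathsf L_x$. Since $\mathsf L_x v\in H_x$ and, by \eqref{eq:projection-uniform-constant}, $|\mathsf L_xv|\le C_{\mathsf L}|v|$, the stated bound on $|\ell_x(v)|$ follows. For $i\in I(x)$, \eqref{eq:projection-kills} gives $\mathsf L_xR_i=0$, so $\ell_x(R_i)=0$. Note that $R_i$ itself need not lie in $G_x$, so this vanishing is purely algebraic and does not come from a derivative.

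The main obstacle I expect is the exchange of limit and expectation: the pathwise derivative exists only on a $w$-dependent null-set complement, and (iv) identifies it with $\mathsf J_t^x[\mathsf L_xw]$ only at fixed times. I would handle this through Fubini on the product space together with the uniform Lipschitz domination, as above, and I would make sure $\Lambda_x$ is defined independently of $w$, so that the factorization is genuinely linear.
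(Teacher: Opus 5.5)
Your proposal is correct and follows essentially the same route as the paper's proof. You define $\Lambda_x$ through the derivative process $\mathsf J_t^x$, differentiate under $\mathbb E\int_0^\infty e^{-\lambda t}\cdots\,dt$ using the synchronous Lipschitz domination and dominated convergence, replace $\partial_wZ_t^x$ by $\mathsf J_t^x[\mathsf L_xw]$ via the fixed-time identity and Fubini, and obtain the bound on $\ell_x$ and $\ell_x(R_i)=0$ from $C_{\mathsf L}$ and $\mathsf L_xR_i=0$. Your final per-$u$ bound via \cref{thm:regularity-SRBM}(v) is exactly what the paper uses, so the basis detour is unnecessary.
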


\begin{proof}
Fix \(x\in E\). For \(u\in H_x\), define $\Lambda_x(u)
:=
\mathbb E\int_0^\infty e^{-\lambda t}
\nabla h(Z_t^x)\cdot \mathsf J_t^x[u]\,dt .$ We first record the measurability and integrability facts needed to define
\(\Lambda_x\). Since \(\mathsf J^x\) is adapted and RCLL with values in the finite dimensional space
\(\operatorname{Lin}(H_x,\mathbb R^d)\), for every fixed \(u\in H_x\) the process $(t,\omega)\mapsto \mathsf J_t^x[u](\omega)$ is progressively measurable, hence \(\mathcal B([0,\infty))\otimes\mathcal F\)-measurable.

By Theorem \ref{thm:regularity-SRBM}, for each fixed \(u\in H_x\), we have 
\(
|\mathsf J_t^x[u]|\le K_\Gamma |u|\) holds for \(dt\otimes \mathbb P\)-almost every \((t,\omega)\). Hence  $\mathbb E\int_0^\infty e^{-\lambda t}
\left|\nabla h(Z_t^x)\cdot \mathsf J_t^x[u]\right|\,dt
\le
\frac{K_\Gamma\|\nabla h\|_\infty}{\lambda}|u|<\infty $, \emph{i.e.} the integral
defining \(\Lambda_x(u)\) is absolutely convergent with
respect to \(e^{-\lambda t}dt\otimes\mathbb P\), and
\[
|\Lambda_x(u)|
\le
\frac{K_\Gamma\|\nabla h\|_\infty}{\lambda}|u|,
\qquad u\in H_x .
\]
Since \(\mathsf J_t^x\in \operatorname{Lin}(H_x,\mathbb R^d)\), linearity of
\(\Lambda_x\) follows from linearity of \(\mathsf J_t^x\) and the preceding bound.
Thus \(\Lambda_x\) is a bounded linear functional on \(H_x\).

Now fix \(w\in G_x\). For all sufficiently small \(\varepsilon>0\),
\(x+\varepsilon w\in E\). By the definition of \(g\),
\begin{equation}
\frac{g(x+\varepsilon w)-g(x)}{\varepsilon}
=
\mathbb E\int_0^\infty e^{-\lambda t}
\frac{h(Z_t^{x+\varepsilon w})-h(Z_t^x)}{\varepsilon}\,dt .
\end{equation}
The synchronous Lipschitz estimate gives the deterministic domination
\begin{equation}
\left|
\frac{h(Z_t^{x+\varepsilon w})-h(Z_t^x)}{\varepsilon}
\right|
\le
K_\Gamma \|\nabla h\|_\infty |w| ,
\end{equation}
uniformly for all sufficiently small \(\varepsilon>0\), all \(t\ge0\), and all
sample paths.

On the probability one event in \cref{thm:regularity-SRBM} corresponding to this fixed pair
\((x,w)\), the pathwise directional derivative
\(
\partial_w Z_t^x
=
\lim_{\varepsilon\downarrow0}
\frac{Z_t^{x+\varepsilon w}-Z_t^x}{\varepsilon}\) exists for every \(t\ge0\). Since \(h\in C_c^\infty(\mathbb R^d)\), the
mean value formula gives
\[
\frac{h(Z_t^{x+\varepsilon w})-h(Z_t^x)}{\varepsilon}
\longrightarrow
\nabla h(Z_t^x)\cdot \partial_w Z_t^x
\]
for \(dt\otimes\mathbb P\)-almost every \((t,\omega)\). Dominated convergence
with respect to \(e^{-\lambda t}\,dt\otimes\mathbb P\) therefore yields
\begin{equation}\label{eq:resolvent-derivative-process}
\partial_w^+ g(x)
=
\mathbb E\int_0^\infty e^{-\lambda t}
\nabla h(Z_t^x)\cdot \partial_w Z_t^x\,dt .
\end{equation}

For every fixed \(t>0\), Theorem \ref{thm:regularity-SRBM} gives $\partial_w Z_t^x = \mathsf J_t^x[\mathsf L_xw]$ almost surely. For fixed \(x\) and \(w\), the map $(t,\omega)\mapsto \partial_w Z_t^x(\omega)$ is \(\mathcal B([0,\infty))\otimes\mathcal F\)-measurable, since it is the
pointwise limit of the continuous-in-\(t\) difference quotients $\varepsilon^{-1}\bigl(Z_t^{x+\varepsilon w}-Z_t^x\bigr).$ The process \(t\mapsto \mathsf J_t^x[\mathsf L_xw]\) is measurable because \(\mathsf J^x\) is RCLL. Hence the fixed-time almost sure identity can be integrated in \(t\), giving
the identity \(dt\otimes\mathbb P\)-almost everywhere. By Fubini, this identity holds for \(dt\otimes\mathbb P\)-almost every
\((t,\omega)\). The value at \(t=0\) is irrelevant for the time integral.
Substituting into the preceding display gives
\begin{equation}
\partial_w^+ g(x)
=
\mathbb E\int_0^\infty e^{-\lambda t}
\nabla h(Z_t^x)\cdot \mathsf J_t^x[\mathsf L_xw]\,dt
=
\Lambda_x(\mathsf L_xw),
\end{equation}
which proves the factorization.

Finally, by Lemma \ref{lem:projection}, we have $|\mathsf L_xv|\le C_{\mathsf L}|v|,$ holds for all $v\in\mathbb R^d.$ Therefore
\[
|\ell_x(v)|
=
|\Lambda_x(\mathsf L_xv)|
\le
\frac{K_\Gamma\|\nabla h\|_\infty}{\lambda}|\mathsf L_xv|
\le
\frac{K_\Gamma C_{\mathsf L}\|\nabla h\|_\infty}{\lambda}|v|.
\]
If \(i\in I(x)\), then Lemma \ref{lem:projection} gives \(\mathsf L_xR_i=0\), and hence $\ell_x(R_i)=\Lambda_x(\mathsf L_xR_i)=0.$ This completes the proof.
\end{proof}
\begin{remark}[Regularity at the boundary]\label{rem:no-classical-gradient} At a boundary point $x$, the proposition gives a bounded linear functional $\ell_x$ extending the feasible one-sided derivative.  Since $g$ is Lipschitz and $w\mapsto\ell_x(w)$ is linear, the ray derivatives imply the cone-wise first-order expansion
\[
 g(x+\epsilon w)=g(x)+\epsilon\ell_x(w)+o_w(\epsilon)
 \qquad
 \text{as }\epsilon\to0,\ x+\epsilon w\in E.
\]
This expansion is local at the fixed boundary point.  It does not assert that $x\mapsto\ell_x$ is continuous towards the boundary, nor that the interior gradient $\nabla g(y)$ has a limit as $y\to x$ from $E^\circ$.  The smoothing argument uses only the algebraic value $\ell_x(R_i)=0$ on active reflection directions, after the feasible-direction limit has been averaged against the one-sided mollifier.
\end{remark}

\begin{proof}[Proof of \cref{prop:resolvent-regularity-projected}]
Boundedness and Lipschitz continuity are \cref{lem:resolvent-Lipschitz}. The statement that $g$ is a classical solution of the resolvent equation in $E^\circ$ is \cref{lem:interior-PDE}. The projection formula and the identity $\mathsf L_xR_i=0$ for $i\in I(x)$ are \cref{lem:projection}. Finally, \cref{prop:resolvent-directional} gives the feasible one-sided derivatives, the factorization through $\mathsf L_x$, and the linear extension $\ell_x$ satisfying $\ell_x(R_i)=0$ on active faces.
\end{proof}

\section{One-sided smoothing and the measure--Neumann approximation}\label{sec:smoothing}

In this section we prove the resolvent insertion theorem,
\cref{thm:resolvent-insertion}.  We first prove the
identity for $h\in C_c^\infty(\mathbb R^d)$, regarded as a function on $E$, by
a one-sided smoothing argument.  At the end of the proof, a density argument
extends the identity to all $h\in C_0(E)$.  Thus, until this final density step throughout this whole section,
we fix $\lambda>0$, $h\in C_c^\infty(\mathbb R^d)$, and write
$g=R_\lambda h$ as in \eqref{eq:resolvent}.

\begin{theorem}[Resolvent insertion theorem]\label{thm:resolvent-insertion}
For every finite signed BAR tuple $(\bar\pi,\bar\nu_1,\ldots,\bar\nu_d)$, every $h\in C_0(E)$, and every $\lambda>0$,
\begin{equation}\label{eq:resolvent-invariance}
\tag{RI}
 \int_E(\lambda R_\lambda h-h)\,d\bar\pi=0.
\end{equation}
\end{theorem}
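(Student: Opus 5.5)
We first prove \eqref{eq:resolvent-invariance} for $h\in C_c^\infty(\mathbb R^d)$, with $g=R_\lambda h$ as in \eqref{eq:resolvent}, and then extend to $h\in C_0(E)$ by density. Fix a mollifier $\rho\in C_c^\infty(\mathbb R^d)$ with $\rho\ge0$, $\int\rho=1$ and $\supp\rho\subset(1,2)^d$. Define
\[
g_\varepsilon(x)=\int\rho(w)g(x+\varepsilon w)\,dw .
\]
For $x\in E$ and $w\in\supp\rho$, the point $x+\varepsilon w$ lies in $E^\circ$ at distance at least $\varepsilon$ from $\partial E$. Hence $g_\varepsilon$ only samples $g$ where it is smooth.

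\textbf{Step 1: regularity of $g_\varepsilon$.} We show $g_\varepsilon\in C_b^2(E)$. Writing $g_\varepsilon(x)=\varepsilon^{-d}\int\rho((y-x)/\varepsilon)g(y)\,dy$ puts all derivatives on $\rho$. This gives $|\partial^\alpha g_\varepsilon|\le C_\alpha\varepsilon^{-|\alpha|}\|g\|_\infty$, uniformly on $E$ and even on a neighborhood of $E$. By \cref{lem:resolvent-Lipschitz}, $\|g_\varepsilon\|_\infty\le\|h\|_\infty/\lambda$, and $g_\varepsilon\to g$ pointwise on $E$ (indeed uniformly, with rate $2\sqrt d\,\varepsilon\Lip(g)$).

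\textbf{Step 2: the interior equation.} Since $L$ has constant coefficients and $g$ is smooth at all sampled points, \cref{lem:interior-PDE} gives
\[
(\lambda-L)g_\varepsilon(x)=\int\rho(w)\,h(x+\varepsilon w)\,dw=:h_\varepsilon(x).
\]
Here $h_\varepsilon\to h$ uniformly on $E$ because $h$ is uniformly continuous. Note that this uses only interior sampling, so there is no boundary issue here.

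\textbf{Step 3: the boundary terms (main obstacle).} We must show that $D_ig_\varepsilon(x)=R_i\cdot\nabla g_\varepsilon(x)$ is uniformly bounded on $F_i$ and tends to $0$ pointwise for each $x\in F_i$. Then dominated convergence against $|\bar\nu_i|$ gives $\int_{F_i}D_ig_\varepsilon\,d\bar\nu_i\to0$.

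\emph{Uniform bound.} Since $g$ is Lipschitz, we can differentiate under the integral: $\nabla g_\varepsilon(x)=\int\rho(w)\nabla g(x+\varepsilon w)\,dw$. Thus $|D_ig_\varepsilon|\le|R_i|\Lip(g)$.

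\emph{Pointwise limit.} Fix $x\in F_i$. We write the directional derivative as a limit of difference quotients:
\[
D_ig_\varepsilon(x)=\lim_{s\to0}\int\rho(w)\,\frac{g(x+\varepsilon w+sR_i)-g(x+\varepsilon w)}{s}\,dw .
\]
We then rescale by $\varepsilon$ and compare with the first-order expansion at $x$ from \cref{rem:no-classical-gradient}:
\[
g(x+\varepsilon v)=g(x)+\varepsilon\ell_x(v)+o(\varepsilon),\qquad v\in G_x .
\]
The expansion applies because $v=w$ and $v=w+(s/\varepsilon)R_i$ both lie in $G_x$ for $w\in(1,2)^d$ and small $s/\varepsilon$. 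The cleanest route is to integrate by parts in $w$:
\[
D_ig_\varepsilon(x)=-\varepsilon^{-1}\int R_i\cdot\nabla\rho(w)\,g(x+\varepsilon w)\,dw .
\]
Now substitute $g(x+\varepsilon w)=g(x)+\varepsilon\ell_x(w)+r_\varepsilon(w)$. The constant term integrates to $0$. The linear term gives
\[
-\int R_i\cdot\nabla\rho(w)\,\ell_x(w)\,dw=\int\rho\,\ell_x(R_i)=0
\]
by \cref{prop:resolvent-directional}. The remainder contributes $\varepsilon^{-1}\int|\nabla\rho|\,|r_\varepsilon(w)|\,dw$.

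The delicate point is that $r_\varepsilon(w)=o(\varepsilon)$ must hold uniformly for $w$ in the compact set $\supp\rho$, whereas the expansion is stated only along rays. We upgrade ray-wise convergence to uniform convergence on compacts of the cone $G_x$ as follows. The quotients $w\mapsto(g(x+\varepsilon w)-g(x))/\varepsilon$ are equi-Lipschitz with constant $\Lip(g)$ and converge pointwise to $\ell_x(w)$. Arzelà–Ascoli (or a finite-net argument) then gives uniform convergence on $\supp\rho$. Hence the remainder term is $o(1)$, and $D_ig_\varepsilon(x)\to0$.

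\textbf{Step 4: passing to the limit and density.} Insert $g_\varepsilon$ into \eqref{eq:signed-BAR}. Using $Lg_\varepsilon=\lambda g_\varepsilon-h_\varepsilon$, we obtain
\[
\int(\lambda g_\varepsilon-h_\varepsilon)\,d\bar\pi+\sum_i\int_{F_i}D_ig_\varepsilon\,d\bar\nu_i=0 .
\]
By bounded convergence and Step 3, letting $\varepsilon\downarrow0$ gives $\int(\lambda R_\lambda h-h)\,d\bar\pi=0$.

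For general $h\in C_0(E)$, extend $h$ to a function in $C_0(\mathbb R^d)$ and approximate it uniformly by $h_n\in C_c^\infty(\mathbb R^d)$ (for instance, cutoff followed by mollification). Since $\|\lambda R_\lambda(h_n-h)\|_\infty\le\|h_n-h\|_\infty$, both terms converge against the finite measure $\bar\pi$. This proves \eqref{eq:resolvent-invariance} for all $h\in C_0(E)$.
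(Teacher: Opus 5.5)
Your proposal is correct and follows essentially the same route as the paper: the one-sided mollifier supported in $(1,2)^d$, the interior equation $(\lambda-L)g_\varepsilon=h_\varepsilon$, integration by parts in $w$ reducing $\lim_{\varepsilon\downarrow0}D_ig_\varepsilon(x)$ to $\ell_x(R_i)=0$, dominated convergence against $|\bar\nu_i|$, and a final uniform-density step. The differences are cosmetic: you bound $D_ig_\varepsilon$ through $|\nabla g|\le\Lip(g)$ on $E^\circ$ instead of through the integrated-by-parts representation, and your Arzel\`a--Ascoli upgrade of the ray-wise expansion is valid but unnecessary, since the paper applies dominated convergence in $w$ directly with the integrable bound $|(R_i\cdot\nabla\rho)(w)|\,\Lip(g)\,|w|$.
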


First we prove the insertion for smooth compactly supported $h$, then extend it to $C_0(E)$ by uniform approximation. A convolution supported strictly inside the orthant produces bounded $C^2$ functions on a neighborhood of the closed state space. Integration by parts in the convolution variable proves vanishing of every oblique boundary derivative, with a bound uniform in the smoothing scale. Since the BAR is imposed on $C_b^2(E)$, no spatial cutoff is needed in the resolvent insertion.

Now we one-sided smoothing the function $g$. We seek a mollifier $\rho\in C_c^\infty((1,2)^d)$ that satisfies $\rho\ge0$ and $\int_{\R^d}\rho(w)\,dw=1.$ For $\varepsilon>0$, define
\begin{equation}\label{eq:one-sided-smoothing}
 g_\varepsilon(x)=\int_{\R^d}\rho(w)g(x+\varepsilon w)\,dw,\qquad h_\varepsilon(x)=\int_{\mathbb R^d}\rho(w)h(x+\varepsilon w)\,dw ,
 \qquad x\in E.
\end{equation}
Because the support of $\rho$ lies strictly inside the positive orthant, $g_\varepsilon$ is defined and smooth on an open neighborhood of $E$.

\begin{lemma}[One-sided smoothing]\label{lem:smoothing-basic}
$g_\varepsilon\in C_b^2(E)$ for each fixed $\varepsilon>0$, $g_\varepsilon$ is smooth on an open neighborhood of $E$, $g_\varepsilon\to g$ uniformly, $g_\varepsilon$ is uniformly bounded and Lipschitz, and
\begin{equation}\label{eq:smoothed-PDE}
 (\lambda-L)g_\varepsilon=h_\varepsilon
 \qquad\text{on }E.
\end{equation}
Moreover,
\begin{align}
 \norm{g_\varepsilon-g}_\infty&\le C_\rho\varepsilon\Lip(g),
 \label{eq:g-uniform-convergence}\\
 \norm{h_\varepsilon-h}_\infty&\longrightarrow0,
 \label{eq:h-uniform-convergence}\\
 \norm{g_\varepsilon}_\infty&\le\norm{g}_\infty,
 \label{eq:g-epsilon-bound}\\
 \norm{\nabla g_\varepsilon}_\infty&\le C_\rho\Lip(g),
 \label{eq:gradient-bound}\\
 \norm{D^2g_\varepsilon}_\infty&\le C_\rho\varepsilon^{-1}\Lip(g).
 \label{eq:hessian-bound}
\end{align}
\end{lemma}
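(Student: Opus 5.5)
The plan is to exploit that the mollifier only samples $g$ at interior points, so every property of $g_\varepsilon$ reduces to interior information on $g$ from \cref{lem:resolvent-Lipschitz} and \cref{lem:interior-PDE}, plus elementary convolution estimates.

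\emph{Domain and smoothness.} Put $U_\varepsilon=\{x\in\R^d:x_i>-\varepsilon\ \text{for all }i\}$, an open neighborhood of $E$. If $x\in U_\varepsilon$ and $w\in\supp\rho\subset(1,2)^d$, then $x_i+\varepsilon w_i>-\varepsilon+\varepsilon=0$, so $x+\varepsilon w\in E^\circ$. Hence $g_\varepsilon$ is defined on $U_\varepsilon$. The change of variables $y=x+\varepsilon w$ gives
\[
g_\varepsilon(x)=\varepsilon^{-d}\int\rho\bigl((y-x)/\varepsilon\bigr)g(y)\,dy .
\]
Since $g$ is bounded and measurable, we may differentiate under the integral with all derivatives falling on $\rho$. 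This shows $g_\varepsilon\in C^\infty(U_\varepsilon)$, and in particular $g_\varepsilon\in C^2(E)$ in the closed-domain sense. The same argument applies to $h_\varepsilon$.

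\emph{Quantitative bounds.} The sup bound \eqref{eq:g-epsilon-bound} holds because $\rho\ge0$ and $\int\rho=1$. For \eqref{eq:g-uniform-convergence}, use $|g(x+\varepsilon w)-g(x)|\le\varepsilon|w|\Lip(g)$ and integrate against $\rho$. For the derivative bounds, write the derivatives in the $w$ variable. Since $\int\partial^\alpha\rho=0$ for $|\alpha|\ge1$, we can subtract $g(x)$:
\[
\partial_k g_\varepsilon(x)=-\varepsilon^{-1}\int\partial_k\rho(w)\bigl[g(x+\varepsilon w)-g(x)\bigr]\,dw,
\qquad
\partial_{kl} g_\varepsilon(x)=\varepsilon^{-2}\int\partial_{kl}\rho(w)\bigl[g(x+\varepsilon w)-g(x)\bigr]\,dw .
\]
The Lipschitz bound then yields \eqref{eq:gradient-bound} and \eqref{eq:hessian-bound}. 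One may take
\[
C_\rho=\max\Bigl\{\int|w|\rho,\ \int|w|\,|\nabla\rho|,\ \int|w|\,|D^2\rho|\Bigr\}.
\]
The gradient bound also gives the uniform Lipschitz property, so $g_\varepsilon\in C_b^2(E)$ for each fixed $\varepsilon$. Finally, \eqref{eq:h-uniform-convergence} follows because $h$ is uniformly continuous and $\varepsilon|w|\le2\sqrt d\,\varepsilon$ on $\supp\rho$.

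\emph{The smoothed PDE.} This is the only step needing care: we must move $L$ inside the integral using the \emph{interior} regularity of $g$. Fix $x\in E$ and a small ball $B(x,r)$ with $r<\varepsilon/2$. The set $K=\{y+\varepsilon w:y\in\overline{B(x,r)},\,w\in\supp\rho\}$ is compact and contained in $E^\circ$, since every coordinate is at least $-r+\varepsilon\min w_i>0$. By \cref{lem:interior-PDE}, $g\in C^\infty$ on a neighborhood of $K$, so $g$, $\nabla g$ and $D^2g$ are bounded there. We may therefore differentiate $y\mapsto\int\rho(w)g(y+\varepsilon w)\,dw$ under the integral on $B(x,r)$ with the derivatives falling on $g$. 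Because $L$ has constant coefficients, this gives
\[
(\lambda-L)g_\varepsilon(x)=\int\rho(w)\,\bigl[(\lambda-L)g\bigr](x+\varepsilon w)\,dw=\int\rho(w)h(x+\varepsilon w)\,dw=h_\varepsilon(x),
\]
which is \eqref{eq:smoothed-PDE}. Nothing about $g$ on $\partial E$ is needed here.
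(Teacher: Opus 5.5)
Your proposal is correct and follows essentially the same route as the paper. Derivatives are moved onto $\rho$, using $\int\partial^\alpha\rho=0$ to subtract $g(x)$, for the bounds \eqref{eq:gradient-bound}--\eqref{eq:hessian-bound}; and onto $g$ over the interior compact set $x+\varepsilon\supp\rho\subset E^\circ$ for \eqref{eq:smoothed-PDE}, where your explicit neighborhood $U_\varepsilon=\{x_i>-\varepsilon\}$ and ball $B(x,r)$ with $r<\varepsilon/2$ simply make the paper's differentiation-under-the-integral step a bit more explicit.
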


\begin{proof} We first justify the smoothness of \(g_\varepsilon\). Let $\delta_\rho:=\operatorname{dist}(\operatorname{supp}\rho,\partial \mathbb R_+^d)>0 .$
For each fixed \(\varepsilon>0\), define $U_\varepsilon
        :=
        \{x\in\mathbb R^d: x+\varepsilon w\in E^\circ
          \text{ for every } w\in\operatorname{supp}\rho\}.$ Then \(U_\varepsilon\) is an open neighborhood of \(E\), because
\(\operatorname{supp}\rho\Subset(0,\infty)^d\). Thus \(g_\varepsilon\) is well defined on
\(U_\varepsilon\). Although \(g\) is only known to be
globally Lipschitz on \(E\), the derivatives of \(g_\varepsilon\) may be computed by
integration by parts in the convolution variable. For every multiindex \(\alpha\),
\[
        \partial_x^\alpha g_\varepsilon(x)
        =
        (-1)^{|\alpha|}\varepsilon^{-|\alpha|}
        \int_{\mathbb R^d}
        \partial_w^\alpha\rho(w)\,g(x+\varepsilon w)\,dw,
        \qquad x\in U_\varepsilon .
\tag{4.10}
\]
This identity is first obtained in the sense of distributions on \(U_\varepsilon\). Since
the right hand side is continuous in \(x\), it is the classical derivative. Iterating the
same argument gives derivatives of all orders; hence \(g_\varepsilon\in C^\infty(U_\varepsilon)\).
In particular, \(g_\varepsilon\in C^2(E)\) in the closed domain sense.

For fixed \(x\in E\), the compact set \(x+\varepsilon\operatorname{supp}\rho\) lies in
\(E^\circ\). Same as the proof of Lemma \ref{lem:interior-PDE}, \(g\) is a classical solution of $(\lambda-L)g=h$ on this compact subset of the interior. Since \(L\) has constant coefficients, differentiating
under the integral on this interior compact set gives $(\lambda-L)g_\varepsilon(x)
        =
        \int_{\mathbb R^d}\rho(w)(\lambda-L)g(x+\varepsilon w)\,dw  =
        \int_{\mathbb R^d}\rho(w)h(x+\varepsilon w)\,dw
        =
        h_\varepsilon(x),$ which proves \eqref{eq:smoothed-PDE}.

Next, the global Lipschitz continuity of \(g\) gives
\[
 \abs{g_\varepsilon(x)-g(x)}
 \le\varepsilon\Lip(g)\int\abs{w}\rho(w)\,dw,
\]
which is \eqref{eq:g-uniform-convergence}. Since \(h\in C_c^\infty(\mathbb R^d)\), it is uniformly continuous, and therefore $\|h_\varepsilon-h\|_\infty\to 0$ which proves gives \eqref{eq:h-uniform-convergence}.
The bound \eqref{eq:g-epsilon-bound} follows from $\rho\ge0$ and $\int\rho=1$:
\[
        |g_\varepsilon(x)|
        \le
        \int_{\mathbb R^d}\rho(w)|g(x+\varepsilon w)|\,dw
        \le
        \|g\|_\infty .
\]

For the gradient, integration by parts in $w$ and $\int_{\mathbb R^d}\partial_{w_j}\rho(w)\,dw=0$  gives
\begin{equation}\label{eq:partial-mollifier}
 \partial_{x_j}g_\varepsilon(x)
 =-\frac1\varepsilon\int \partial_{w_j}\rho(w)
 \bigl(g(x+\varepsilon w)-g(x)\bigr)\,dw.
\end{equation}
Hence $|\partial_{x_j}g_\varepsilon(x)| \le \operatorname{Lip}(g) \int_{\mathbb R^d} |\partial_{w_j}\rho(w)|\,|w|\,dw .$ The Lipschitz bound on the difference proves \eqref{eq:gradient-bound}. Differentiating once more in the same distributional-convolution formula and subtracting the constant $g(x)$ gives
\[
 \partial_{x_jx_k}^2g_\varepsilon(x)
 =\frac1{\varepsilon^2}\int \partial_{w_jw_k}^2\rho(w)
 \bigl(g(x+\varepsilon w)-g(x)\bigr)\,dw,
\]
and therefore $|\partial_{x_jx_k}^2g_\varepsilon(x)|\le
\varepsilon^{-1}\operatorname{Lip}(g)\int_{\mathbb R^d} |\partial_{w_jw_k}^2\rho(w)|\,|w|\,dw .$ Taking the maximum over \(j,k\) proves \eqref{eq:hessian-bound}. Thus $g_\varepsilon$, its first derivatives, and its second derivatives are bounded on $E$, so $g_\varepsilon\in C_b^2(E)$.
\end{proof}

The next proposition is where the projected derivative information for $g$ is used at the boundary. It proves two facts on each face $F_i$: first, $D_i g_\varepsilon(x)\to0$ for every $x\in F_i$; second, the uniform bound in \eqref{eq:boundary-uniform-bound} holds. Together these imply the boundary measure convergence in \eqref{eq:boundary-measure-convergence} by dominated convergence. The pointwise limit is obtained from \cref{prop:resolvent-directional}: if $x\in F_i$ and $w\in\supp\rho\subset(0,\infty)^d$, then $w\in G_x$ and $(g(x+\varepsilon w)-g(x))/\varepsilon\to\ell_x(w)$. After integration by parts in the smoothing variable, the limit of $D_i g_\varepsilon(x)$ becomes $\ell_x(R_i)$, which is zero because $\mathsf L_xR_i=0$ on active faces.

\begin{proposition}[Vanishing oblique derivative after one sided smoothing]\label{prop:vanishing-derivative}
For every \(i\in J\) and every \(x\in F_i\), $D_i g_\varepsilon(x)\longrightarrow 0$ as  $\varepsilon\downarrow0.$ At the same time, the convergence is pointwise in \(x\). There is a constant
\(C_{\rho,R}<\infty\), independent of \(x\) and \(\varepsilon\), such that
\begin{equation}\label{eq:boundary-uniform-bound}
|D_i g_\varepsilon(x)|
\le
C_{\rho,R}\operatorname{Lip}(g),
\qquad
x\in F_i,\quad 0<\varepsilon<1 .
\end{equation}
Consequently, for every finite signed measure \(\eta_i\) on \(F_i\) and every
bounded Borel function \(a:F_i\to\mathbb R\) fixed independently of
\(\varepsilon\), we have 
\begin{equation}\label{eq:boundary-measure-convergence}
    \int_{F_i} a(x)D_i g_\varepsilon(x)\,d\eta_i(x)
\longrightarrow 0 .
\end{equation}
\end{proposition}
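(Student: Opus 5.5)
The plan is to write $D_i g_\varepsilon$ on $F_i$ as an average of difference quotients of $g$ over the mollifier's support. Then Proposition~\ref{prop:resolvent-directional} gives the pointwise limit, and the Lipschitz bound of Lemma~\ref{lem:resolvent-Lipschitz} gives the domination.

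\emph{Representation.} Fix $i\in J$, $x\in F_i$ and $0<\varepsilon<1$. By \eqref{eq:partial-mollifier}, which uses $\int\partial_{w_j}\rho=0$,
\[
 D_i g_\varepsilon(x)=\sum_{j}R_{ji}\,\partial_{x_j}g_\varepsilon(x)
 =-\int_{\R^d}(R_i\cdot\nabla_w\rho)(w)\,
 \frac{g(x+\varepsilon w)-g(x)}{\varepsilon}\,dw .
\]

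\emph{Uniform bound.} The difference quotient is bounded by $\Lip(g)\,|w|$. This gives \eqref{eq:boundary-uniform-bound} with
\[
 C_{\rho,R}=\max_i\int|R_i\cdot\nabla\rho(w)|\,|w|\,dw ,
\]
which is finite because $\rho$ has compact support in $(1,2)^d$. It is independent of $x$ and $\varepsilon$, and in fact the bound holds for every $x\in E$.

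\emph{Pointwise limit.} For $w\in\supp\rho\subset(0,\infty)^d$ we have $w\in G_x$, since every coordinate of $w$ is positive. Proposition~\ref{prop:resolvent-directional} then gives, for each such $w$,
\[
 \varepsilon^{-1}\bigl(g(x+\varepsilon w)-g(x)\bigr)\to\ell_x(w).
\]
The integrand is dominated by $|R_i\cdot\nabla\rho(w)|\,\Lip(g)\,|w|$, which is integrable. Dominated convergence in $w$ therefore yields
\[
 D_ig_\varepsilon(x)\to-\int(R_i\cdot\nabla\rho)(w)\,\ell_x(w)\,dw .
\]
Since $\ell_x$ is linear on $\R^d$, we can integrate by parts in $w$: $\rho$ has compact support, and the derivative of $\ell_x(w)$ in direction $R_i$ is the constant $\ell_x(R_i)$. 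Together with $\int\rho=1$, the limit equals $\ell_x(R_i)$. Because $i\in I(x)$, Proposition~\ref{prop:resolvent-directional} gives $\ell_x(R_i)=0$. No regularity of $x\mapsto\ell_x$ is needed, since everything happens at the fixed point $x$.

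\emph{Measure convergence.} For \eqref{eq:boundary-measure-convergence}, $x\mapsto a(x)D_ig_\varepsilon(x)$ is Borel, because $D_ig_\varepsilon$ is continuous. It is bounded by $\|a\|_\infty C_{\rho,R}\Lip(g)$, which is integrable against $|\eta_i|$ since $\eta_i$ is finite. It converges pointwise to $0$ on $F_i$. Dominated convergence with respect to $|\eta_i|$, applied to the Jordan parts $\eta_i^\pm$ separately, gives the limit. Dominated convergence is applied along an arbitrary sequence $\varepsilon_n\downarrow0$, which suffices.

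The main obstacle is the pointwise step. We must make sure the pathwise-derived one-sided derivative is used only along feasible rays, and that linearity of $\ell_x$ is valid on all of $\R^d$ so the integration by parts is legitimate. Both are already packaged in Proposition~\ref{prop:resolvent-directional}. The remaining detail is that the mollifier support lies in the open orthant, so every sampled $w$ is admissible even at corners.
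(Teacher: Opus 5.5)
Your proposal is correct and follows the paper's proof essentially step for step. The shared ingredients are: the integration-by-parts representation of $D_i g_\varepsilon(x)$ as a $-(R_i\cdot\nabla\rho)$-weighted average of difference quotients; dominated convergence in $w$ via Proposition~\ref{prop:resolvent-directional} and the Lipschitz bound; a second integration by parts that reduces the limit to $\ell_x(R_i)=0$; the same constant $C_{\rho,R}$; and dominated convergence against $|\eta_i|$ for the measure statement.
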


\begin{proof}
Since \(\rho\in C_c^\infty((1,2)^d)\), integration by parts in the
\(w\)-variable gives, for \(x\in E\),
\[
\nabla g_\varepsilon(x)
=
-\frac1{\varepsilon}
\int_{\mathbb R^d}
\nabla\rho(w)\,g(x+\varepsilon w)\,dw .
\]
Also, $\int_{\mathbb R^d} R_i\cdot\nabla\rho(w)\,dw=0.$ Therefore, for \(x\in F_i\),
\begin{equation}
D_i g_\varepsilon(x)
=
-\int_{\mathbb R^d}
(R_i\cdot\nabla\rho)(w)
\frac{g(x+\varepsilon w)-g(x)}{\varepsilon}\,dw .
\end{equation}

Fix \(x\in F_i\). Then \(i\in I(x)\). Since
\(\operatorname{supp}\rho\subset (1,2)^d\), every
\(w\in\operatorname{supp}\rho\) belongs to \(G_x\). Proposition \ref{prop:resolvent-directional} gives,
for each such fixed \(w\),
\(
\frac{g(x+\varepsilon w)-g(x)}{\varepsilon}
\longrightarrow
\ell_x(w).\) Moreover,
\[
\left|
(R_i\cdot\nabla\rho)(w)
\frac{g(x+\varepsilon w)-g(x)}{\varepsilon}
\right|
\le
|(R_i\cdot\nabla\rho)(w)|\,\operatorname{Lip}(g)|w|.
\]
The right hand side is integrable over \(\mathbb R^d\), because
\(\rho\) is smooth and compactly supported. Dominated convergence in the
mollifier variable \(w\) gives
\begin{equation}
\lim_{\varepsilon\downarrow0}D_i g_\varepsilon(x)
=
-\int_{\mathbb R^d}
(R_i\cdot\nabla\rho)(w)\ell_x(w)\,dw .
\end{equation}
Since \(\ell_x\) is linear and \(\rho\) has compact support, another
integration by parts gives
\begin{equation}
-\int_{\mathbb R^d}
(R_i\cdot\nabla\rho)(w)\ell_x(w)\,dw
=
\int_{\mathbb R^d}\rho(w)\ell_x(R_i)\,dw
=
\ell_x(R_i).
\end{equation}
There is no boundary term because \(\rho\in C_c^\infty((1,2)^d)\).
Since \(i\in I(x)\), Proposition \ref{prop:resolvent-directional} gives \(\ell_x(R_i)=0\). This proves
the pointwise convergence.

The same representation and the Lipschitz bound give
\[
|D_i g_\varepsilon(x)|
\le
\operatorname{Lip}(g)
\int_{\mathbb R^d}
|(R_i\cdot\nabla\rho)(w)|\,|w|\,dw .
\]
Thus the uniform estimate holds with $C_{\rho,R}
:=
\max_{1\le k\le d}
\int_{\mathbb R^d}
|(R_k\cdot\nabla\rho)(w)|\,|w|\,dw
<\infty .$

Finally, let \(\eta_i\in M(F_i)\), and let
\(a:F_i\to\mathbb R\) be bounded Borel and fixed independently of
\(\varepsilon\). Since \(D_i g_\varepsilon\) is continuous on \(F_i\), the
product \(aD_i g_\varepsilon\) is Borel. The pointwise convergence just proved
and the bound
\[
|a(x)D_i g_\varepsilon(x)|
\le
\|a\|_\infty C_{\rho,R}\operatorname{Lip}(g)
\]
allow dominated convergence with respect to \(|\eta_i|\). Hence
\(\int_{F_i}a(x)D_i g_\varepsilon(x)\,d\eta_i(x)
\longrightarrow0 .\)
This completes the proof.
\end{proof}

\begin{proposition}[Measure--Neumann resolvent approximation]\label{prop:measure-Neumann}
Let $m$ be a finite signed measure on $E$ and let $\eta_i$ be finite signed measures on $F_i$. Then, as $\varepsilon\downarrow0$,
\begin{align}
 \int_E(\lambda-L)g_\varepsilon\,dm
 &\longrightarrow\int_Eh\,dm,
 \label{eq:MN-interior}\\
 \int_Eg_\varepsilon\,dm
 &\longrightarrow\int_Eg\,dm,
 \label{eq:MN-function}\\
 \int_{F_i}D_i g_\varepsilon\,d\eta_i
 &\longrightarrow0,
 \qquad i=1,\ldots,d.
 \label{eq:MN-boundary}
\end{align}
\end{proposition}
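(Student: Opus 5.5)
The three limits are proved separately, each reduced to uniform or dominated convergence against the total variation measures $|m|$ and $|\eta_i|$. All of the analytic content is already in \cref{lem:smoothing-basic} and \cref{prop:vanishing-derivative}. What remains is to check that each integrand is a bounded Borel function converging in a mode that finite signed measures can see.

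For \eqref{eq:MN-interior}, use the smoothed resolvent equation \eqref{eq:smoothed-PDE}, which says $(\lambda-L)g_\varepsilon=h_\varepsilon$ pointwise on all of $E$, boundary included. This holds because $g_\varepsilon$ is smooth on a neighborhood of $E$ and only samples $g$ at interior points. Then
\[
 \Bigl|\int_E(\lambda-L)g_\varepsilon\,dm-\int_E h\,dm\Bigr|
 \le \|h_\varepsilon-h\|_\infty\,\|m\|_{\TV},
\]
and \eqref{eq:h-uniform-convergence} sends the right side to zero. Note that this does not use the Hessian bound \eqref{eq:hessian-bound}; the possible $\varepsilon^{-1}$ blow-up of $D^2g_\varepsilon$ is harmless because the combination $(\lambda-L)g_\varepsilon$ is identified exactly. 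Measurability is clear, since $g_\varepsilon$ and $h_\varepsilon$ are continuous.

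For \eqref{eq:MN-function}, the estimate \eqref{eq:g-uniform-convergence} gives $\|g_\varepsilon-g\|_\infty\le C_\rho\varepsilon\Lip(g)$, with $\Lip(g)<\infty$ by \cref{lem:resolvent-Lipschitz}. Hence the difference of the integrals is at most $C_\rho\varepsilon\Lip(g)\|m\|_{\TV}\to0$. Here $g$ is continuous, hence Borel, so $\int g\,dm$ is well defined.

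For \eqref{eq:MN-boundary}, apply \cref{prop:vanishing-derivative} with $a\equiv1$ and $\eta_i$ the given boundary measure. That proposition supplies pointwise convergence $D_ig_\varepsilon(x)\to0$ on $F_i$ together with the $\varepsilon$-uniform bound \eqref{eq:boundary-uniform-bound}, and dominated convergence with respect to $|\eta_i|$ concludes. The only point needing care is that \eqref{eq:boundary-uniform-bound} is stated for $0<\varepsilon<1$, so we restrict to $\varepsilon<1$ before taking the limit.

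The genuine obstacle was already handled upstream: pointwise vanishing of $D_ig_\varepsilon$ at corner points rests on the factorization $\ell_x(R_i)=0$ of \cref{prop:resolvent-directional}. In the present proof the only thing to watch is that every bound used is uniform in $x$. This is why I rely on the exact identity \eqref{eq:smoothed-PDE} rather than on separate convergence of $Lg_\varepsilon$, which would fail near the boundary.
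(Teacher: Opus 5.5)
Your proposal is correct and is the paper's argument: the exact identity $(\lambda-L)g_\varepsilon=h_\varepsilon$ with uniform convergence $h_\varepsilon\to h$ handles the interior term, the bound $\|g_\varepsilon-g\|_\infty\le C_\rho\varepsilon\Lip(g)$ handles the function term, and \cref{prop:vanishing-derivative} with $a\equiv1$ handles the boundary pairings. One small correction to your closing aside: since $Lg_\varepsilon=\lambda g_\varepsilon-h_\varepsilon$, the combination $Lg_\varepsilon$ does converge uniformly (to $\lambda g-h$) on $E$, so it does not fail near the boundary; what need not converge there are the individual second derivatives $D^2g_\varepsilon$, whose bound degenerates like $\varepsilon^{-1}$.
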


\begin{proof}
The first assertion follows from $(\lambda-L)g_\varepsilon=h_\varepsilon$ and the uniform convergence $h_\varepsilon\to h$. The second follows from the uniform convergence $g_\varepsilon\to g$. Since $m$ is finite signed, uniform convergence is sufficient in both cases.
For the boundary terms, take the bounded Borel multiplier $a\equiv1$ in \eqref{eq:boundary-measure-convergence}. This gives \eqref{eq:MN-boundary} for each finite signed boundary measure $\eta_i$.
\end{proof}

\begin{proof}[Proof of \cref{thm:resolvent-insertion}]
First assume $h\in C_c^\infty(\mathbb R^d)$, regarded as a function on $E$, and let $g=R_\lambda h$. By \cref{lem:smoothing-basic}, $g_\varepsilon\in C_b^2(E)$, so it is an admissible BAR test. Applying the BAR to $g_\varepsilon$ and rearranging gives
\begin{equation}\label{eq:BAR-resolvent-rearranged}
 \int_E(\lambda-L)g_\varepsilon\,d\bar\pi
 =\lambda\int_Eg_\varepsilon\,d\bar\pi
 +\sum_{i=1}^d\int_{F_i}D_i g_\varepsilon\,d\bar\nu_i.
\end{equation}
By \cref{prop:measure-Neumann}, letting $\varepsilon\downarrow0$ yields
\[
 \int_E h\,d\bar\pi
 =\lambda\int_E R_\lambda h\,d\bar\pi.
\]
Equivalently, $\int_E(\lambda R_\lambda h-h)\,d\bar\pi=0$ holds for every smooth compactly supported $h$.

Now let $h\in C_0(E)$. The restrictions to $E$ of functions in $C_c^\infty(\mathbb R^d)$ are uniformly dense in $C_0(E)$: extend a function from the closed set $E$ to $C_0(\mathbb R^d)$, cut it off, and mollify on $\mathbb R^d$. Choose $h_n\in C_c^\infty(\mathbb R^d)$ with $\|h_n-h\|_\infty\to0$ on $E$. Since $(P_t)$ is a contraction on bounded functions,
\[
 \|R_\lambda(h_n-h)\|_\infty
 \le \lambda^{-1}\|h_n-h\|_\infty.
\]
The finiteness of $\bar\pi$ therefore permits passage to the limit in the smooth identity, proving \eqref{eq:resolvent-invariance} for $h\in C_0(E)$.
\end{proof}

\section{Proof of \cref{prop:RI-criterion}: From the resolvent identity to signed BAR uniqueness}\label{sec:completion}

We now complete the proof of resolvent identity criterion (\cref{prop:RI-criterion}) and then complete the proof of the main Theorem (\cref{thm:main}). We first use the uniqueness of Laplace transforms to show that the signed measure that satisfies \eqref{eq:RI} is invariant for the semigroup \((P_t)\).

\begin{lemma}[Uniqueness of Laplace transforms{\normalfont\cite[Chapter~II]{Doetsch1974}}]\label{lem:laplace-uniqueness}
Let \(a:[0,\infty)\to\mathbb R\) be locally integrable and of at most exponential
growth. Suppose that its Laplace transform
\(
        \widehat a(\lambda)
        =
        \int_0^\infty e^{-\lambda t}a(t)\,dt
\)
vanishes for every \(\lambda\) in some interval \((\lambda_\ast,\infty)\). Then
\(a(t)=0\text{ for Lebesgue almost every }t\ge0 .\)
\end{lemma}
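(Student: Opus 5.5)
The plan is the classical Lerch-type argument: reduce the vanishing of the Laplace transform on a half-line to the vanishing of all moments of a continuous function on $[0,1]$, then use Weierstrass density. Assume $\abs{a(t)}\le Ce^{\gamma t}$ for almost every $t$. Fix $\lambda_0>\max\{\lambda_\ast,\gamma\}$, so that $\widehat a(\lambda_0+n)=0$ for every integer $n\ge0$. Set
\[
 b(t)=\int_0^t e^{-\lambda_0 s}a(s)\,ds,\qquad t\ge0 .
\]
Since $\lambda_0>\gamma$, $b$ is absolutely continuous and bounded. It satisfies $b(0)=0$ and $b(t)\to\widehat a(\lambda_0)=0$ as $t\to\infty$.

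Next I would write $\widehat a(\lambda_0+n)=\int_0^\infty e^{-nt}\,db(t)$ and integrate by parts, using absolute continuity of $b$. The boundary terms vanish: $b(0)=0$ at $t=0$, and $e^{-nt}b(t)\to0$ at infinity because $b$ is bounded and $n\ge1$. This gives
\[
 0=\widehat a(\lambda_0+n)=n\int_0^\infty e^{-nt}b(t)\,dt,\qquad n\ge1 .
\]
The substitution $x=e^{-t}$ turns this into $\int_0^1 x^{n-1}\varphi(x)\,dx=0$ for all $n\ge1$, where $\varphi(x)=b(-\log x)$. Setting $\varphi(0)=\lim_{t\to\infty}b(t)=0$ makes $\varphi$ continuous on $[0,1]$, and also $\varphi(1)=b(0)=0$.

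Thus $\varphi$ is orthogonal in $L^2[0,1]$ to every monomial, and hence to every polynomial. By Weierstrass, choose polynomials $p_k\to\varphi$ uniformly. Then $\int_0^1\varphi^2\,dx=\lim_k\int_0^1\varphi p_k\,dx=0$. Continuity gives $\varphi\equiv0$, so $b\equiv0$ on $[0,\infty)$. By the Lebesgue differentiation theorem, $e^{-\lambda_0 t}a(t)=b'(t)=0$ for almost every $t$, and therefore $a=0$ almost everywhere.

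The only delicate point is the integration by parts on the infinite interval and the continuity of $\varphi$ at $x=0$. Both rest on the exponential growth bound, which guarantees that $b$ is bounded and converges at infinity. This is why $\lambda_0$ must be taken strictly above the growth rate and not merely above $\lambda_\ast$. Everything else is routine.
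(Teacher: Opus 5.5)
Your proof is correct and complete. The choice $\lambda_0>\max\{\lambda_\ast,\gamma\}$ puts $e^{-\lambda_0 t}a$ in $L^1(0,\infty)$. The integration by parts on $[0,T]$ and the limit $T\to\infty$ are justified as you say. The substitution $x=e^{-t}$ gives $\int_0^1 x^{m}\varphi(x)\,dx=0$ for all $m\ge0$ with $\varphi\in C[0,1]$, and Weierstrass then forces $\varphi\equiv0$.

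The paper does not prove this lemma; it only cites Doetsch. What you have written is the classical Lerch argument for that cited fact, so the difference is that your version is self-contained.

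Two small observations:
\begin{itemize}
\item Your argument uses less than the hypotheses. It needs only that $\widehat a$ vanishes along the progression $\lambda_0+n$, and only the integrability of $e^{-\lambda_0 t}a$ rather than a pointwise growth bound on all of $[0,\infty)$. It therefore also covers weaker readings of ``at most exponential growth.''
\item The normalization $\varphi(0)=0$ is not needed. Only the existence of $\lim_{t\to\infty}b(t)$ is used, to make $\varphi$ continuous at $0$.
\end{itemize}

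In the paper's application, $a_h$ is bounded and continuous and its transform vanishes for every $\lambda>0$, so your argument applies with $\gamma=0$.
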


\begin{proposition}[Resolvent identity implies semigroup invariance]\label{prop:semigroup-invariance}
Assume that a finite signed measure \(\bar\pi\) satisfies \eqref{eq:RI} for every
\(h\in C_0(E)\) and every \(\lambda>0\). Then \(\bar\pi\) is invariant for \((P_t)\):
\begin{equation}\label{eq:signed-invariance}
        \int_E P_t\varphi\,d\bar\pi
        =
        \int_E \varphi\,d\bar\pi,
        \qquad
        t\ge0,\quad \varphi\in C_0(E).
\end{equation}
\end{proposition}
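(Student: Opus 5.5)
The plan is to fix $\varphi\in C_0(E)$ and study the defect function
\[
 a(t):=\int_E P_t\varphi\,d\bar\pi-\int_E\varphi\,d\bar\pi,\qquad t\ge0 .
\]
First I record its regularity. By \cref{thm:regularity-SRBM}(ii), $P_t\varphi\in C_0(E)$, $\|P_t\varphi\|_\infty\le\|\varphi\|_\infty$, and $t\mapsto P_t\varphi$ is continuous in sup norm for $t\ge0$, by strong continuity together with the semigroup property and contractivity. Since $\bar\pi$ is a finite signed measure, $a$ is continuous and bounded by $2\|\varphi\|_\infty\|\bar\pi\|_{\TV}$. In particular $a$ is locally integrable and of at most exponential growth.

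Next I compute the Laplace transform of $a$. For $\lambda>0$,
\[
 \int_E\lambda R_\lambda\varphi\,d\bar\pi
 =\lambda\int_E\int_0^\infty e^{-\lambda t}P_t\varphi(x)\,dt\,d\bar\pi(x)
 =\lambda\int_0^\infty e^{-\lambda t}\int_E P_t\varphi\,d\bar\pi\,dt .
\]
The interchange is Fubini with respect to $e^{-\lambda t}dt\otimes|\bar\pi|$. The integrand is bounded by $\|\varphi\|_\infty e^{-\lambda t}$, and $(t,x)\mapsto P_t\varphi(x)$ is jointly measurable, indeed jointly continuous, since $t\mapsto P_t\varphi$ is sup-norm continuous and each $P_t\varphi$ is continuous. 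Because $\lambda\int_0^\infty e^{-\lambda t}dt=1$, \eqref{eq:RI} with $h=\varphi$ becomes
\[
 \lambda\,\widehat a(\lambda)=\int_E(\lambda R_\lambda\varphi-\varphi)\,d\bar\pi=0
 \quad\text{for every }\lambda>0 .
\]
So $\widehat a\equiv0$ on $(0,\infty)$.

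To conclude, \cref{lem:laplace-uniqueness} gives $a(t)=0$ for Lebesgue-a.e.\ $t\ge0$. Continuity of $a$ upgrades this to every $t\ge0$, which is \eqref{eq:signed-invariance}.

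The only point needing care, which I regard as the main (mild) obstacle, is the justification of Fubini and the continuity of $a$. Both rest entirely on the Feller property from \cref{thm:regularity-SRBM}(ii) and the finiteness of $|\bar\pi|$. If one prefers to avoid joint measurability, an alternative is to approximate the Bochner integral $R_\lambda\varphi=\int_0^\infty e^{-\lambda t}P_t\varphi\,dt$ in $C_0(E)$ by Riemann sums, which converge in sup norm by continuity of $t\mapsto P_t\varphi$. One then passes the bounded linear functional $f\mapsto\int f\,d\bar\pi$ through the limit, which yields the same identity.
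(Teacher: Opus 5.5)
Your proposal is correct and follows essentially the same route as the paper: the same continuous, bounded defect function (via strong continuity, the semigroup property and contraction), the Fubini step, the Laplace-uniqueness lemma, and the upgrade from almost every $t$ to every $t$ by continuity; your joint-continuity justification for Fubini is a detail the paper leaves implicit. The paper also records the measure-level identity $\bar\pi P_t=\bar\pi$, using that $C_0(E)$ separates finite Radon measures, because that form is used in the next proposition; it is not needed for the displayed statement.
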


\begin{proof}
From \eqref{eq:RI}, for every \(h\in C_0(E)\) and every \(\lambda>0\),
\begin{equation}\label{eq:RI2}
    \int_E R_\lambda h\,d\bar\pi
        =
        \lambda^{-1}\int_E h\,d\bar\pi .
\end{equation}
Fix \(h\in C_0(E)\) and define $F_h(t)
        =
        \int_E P_t h\,d\bar\pi \text{ }(t>0).$ We first record the elementary regularity of \(F_h\). By the Feller statement \textup{(ii)} in \cref{thm:regularity-SRBM}, \(P_tC_0(E)\subset C_0(E)\), and $\|P_t h-h\|_\infty\xrightarrow{t\downarrow0} 0.$ The semigroup property and the contraction property imply norm continuity of
\(t\mapsto P_t h\) on all of \([0,\infty)\). Indeed, $\|P_s h-P_t h\|_\infty
        =
        \|P_{\min\{s,t\}}(P_{|s-t|}h-h)\|_\infty
        \le
        \|P_{|s-t|}h-h\|_\infty
        \longrightarrow0$ as \(s\to t\).

Since \(\bar\pi\) is finite signed, it follows that \(F_h\) is continuous:
\[
        |F_h(s)-F_h(t)|
        \le
        |\bar\pi|(E)\,\|P_s h-P_t h\|_\infty .
\]
Moreover, we have $|F_h(t)|
        \le
        \|h\|_\infty |\bar\pi|(E),$ for \(|P_t h|\le\|h\|_\infty\). We now pass from the resolvent identity to a Laplace transform identity. Since
\(
        |e^{-\lambda t}P_t h(x)|
        \le
        e^{-\lambda t}\|h\|_\infty
\)
and
\(
        \int_E\int_0^\infty e^{-\lambda t}\|h\|_\infty\,dt\,d|\bar\pi|(x)
        =
        \lambda^{-1}\|h\|_\infty|\bar\pi|(E)
        <
        \infty,
\)
Fubini's theorem gives
\[
\begin{aligned}
        \int_E R_\lambda h\,d\bar\pi
        &=
        \int_E\int_0^\infty e^{-\lambda t}P_t h(x)\,dt\,d\bar\pi(x)  \\
        &=
        \int_0^\infty e^{-\lambda t}
        \left(\int_E P_t h\,d\bar\pi\right)dt                      =
        \int_0^\infty e^{-\lambda t}F_h(t)\,dt .
\end{aligned}
\]
At the same time, since $\lambda^{-1}\int_E h\,d\bar\pi
        =
        \int_0^\infty e^{-\lambda t}F_h(0)\,dt ,$ therefore \eqref{eq:RI2} is equivalent to
\[
        \int_0^\infty e^{-\lambda t}
        \bigl(F_h(t)-F_h(0)\bigr)\,dt
        =
        0,
        \qquad \lambda>0 .
\tag{5.3}
\]
Put $a_h(t)=F_h(t)-F_h(0).$ Then \(a_h\) is continuous and bounded. In particular, \(a_h\) is locally integrable and
of at most exponential growth. Indeed,
\[
        |a_h(t)|
        \le
        |F_h(t)|+|F_h(0)|
        \le
        2\|h\|_\infty|\bar\pi|(E),
        \qquad t\ge0 .
\]
Equation \((5.3)\) says precisely that the Laplace transform of \(a_h\) vanishes for
every \(\lambda>0\). By \cref{lem:laplace-uniqueness},
\(a_h(t)=0 \text{ for Lebesgue almost every }t\ge0 .\) Since \(a_h\) is continuous, this almost everywhere equality upgrades to equality for
every \(t\ge0\). Hence $F_h(t)=F_h(0)$ for $t\ge 0$, \emph{i.e.} $\int_E P_t h\,d\bar\pi
        =
        \int_E h\,d\bar\pi,$ holds for all  $h\in C_0(E)$. 
        
        For a finite
signed measure \(\alpha\), we write $(\alpha P_t)(B)
        =
        \int_E P_t(x,B)\,\alpha(dx)$ for  $B\in\mathcal B(E).$ Then \(\bar\pi P_t\) is a finite signed Radon measure, and for every
\(\varphi\in C_0(E)\),
\[
        \int_E \varphi\,d(\bar\pi P_t)
        =
        \int_E P_t\varphi\,d\bar\pi
        =
        \int_E \varphi\,d\bar\pi .
\]
Since \(C_0(E)\) separates finite Radon measures on the locally compact space \(E\), this
implies $\bar\pi P_t=\bar\pi.$
\end{proof}

\cref{prop:semigroup-invariance} indicates that the signed-BAR interior solution $\bar\pi$ is invariant as a signed measure for the SRBM semigroup, i.e. $\bar\pi P_t=\bar\pi$.
Then we follows the Dai-Dieker \cite{DaiDieker2011}, using Jordan decomposition to show that $\bar\pi$ uniquely characterizes the
stationary probability distribution $\pi_0$ of the diffusion process in the sense that $\bar\pi=\bar\pi(E)\pi_0$. 

\begin{proposition}[Identification of the interior measure]\label{prop:interior-identification}
If $\bar\pi$ is a finite signed invariant measure for the SRBM semigroup, then $\bar\pi=c\pi_0$ where $c=\bar\pi(E)$.
\end{proposition}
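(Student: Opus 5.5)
The plan is the Jordan-decomposition argument sketched in the introduction. Write $\bar\pi=\bar\pi^+-\bar\pi^-$ with $\bar\pi^\pm$ mutually singular finite positive measures, so that $|\bar\pi|=\bar\pi^++\bar\pi^-$. The goal is to show that $\bar\pi^+$ and $\bar\pi^-$ are each invariant for $(P_t)$. Then uniqueness of the invariant probability $\pi_0$ (recalled before \cref{prop:stationary-BAR} from \cite{DupuisWilliams1994,LipshutzRamanan2021}) forces $\bar\pi^\pm=\bar\pi^\pm(E)\pi_0$. Subtracting gives $\bar\pi=(\bar\pi^+(E)-\bar\pi^-(E))\pi_0=\bar\pi(E)\pi_0$, which is the claim with $c=\bar\pi(E)$.

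\textbf{Step 1: total-variation domination.} The transition kernel $P_t(x,\cdot)$ is a positive probability kernel. Hence for any Borel $B$,
\[
|\bar\pi P_t(B)|=\Bigl|\int P_t(x,B)\,\bar\pi^+(dx)-\int P_t(x,B)\,\bar\pi^-(dx)\Bigr|\le (|\bar\pi|P_t)(B).
\]
Applying this on each piece of an arbitrary partition gives $|\bar\pi P_t|\le|\bar\pi|P_t$ as measures. Invariance gives $\bar\pi P_t=\bar\pi$, so $|\bar\pi|\le|\bar\pi|P_t$.

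\textbf{Step 2: equality of masses.} Since $P_t(x,E)=1$, we have $(|\bar\pi|P_t)(E)=|\bar\pi|(E)$. So $|\bar\pi|P_t-|\bar\pi|$ is a nonnegative measure of total mass zero, hence zero, and $|\bar\pi|$ is invariant. Then
\[
\bar\pi^+=\tfrac12(|\bar\pi|+\bar\pi),\qquad \bar\pi^-=\tfrac12(|\bar\pi|-\bar\pi)
\]
are linear combinations of invariant measures, so both are invariant positive finite measures.

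\textbf{Step 3: normalization.} Whenever $\bar\pi^\pm(E)>0$, the normalized measure $\bar\pi^\pm/\bar\pi^\pm(E)$ is an invariant probability. By uniqueness it equals $\pi_0$. If a component has zero mass, it is the zero measure and the identity $\bar\pi^\pm=\bar\pi^\pm(E)\pi_0$ holds trivially.

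\textbf{Main obstacle.} The only delicate point is making sure that "invariant" in the hypothesis means $\bar\pi P_t=\bar\pi$ as Borel measures, not merely when tested against $C_0(E)$. \Cref{prop:semigroup-invariance} gives exactly the measure-level statement, via separation of finite Radon measures by $C_0(E)$, so Step 1 can be carried out on Borel sets.

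Measurability of $x\mapsto P_t(x,B)$ also has to be checked. It follows from the Feller property in \cref{thm:regularity-SRBM}(ii) together with a monotone class argument, starting from $C_0(E)$ functions and passing to indicators.

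Finally, the uniqueness of $\pi_0$ must be among invariant probabilities of the same semigroup. This is the cited result, so no further work is expected there.
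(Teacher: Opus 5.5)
Your proposal is correct and follows the paper's proof essentially step for step: the kernel domination $|\bar\pi P_t|\le|\bar\pi|P_t$, equality of total masses forcing $|\bar\pi|P_t=|\bar\pi|$, invariance of the Jordan components, and normalization together with uniqueness of $\pi_0$. Your extra remarks on measure-level invariance and on measurability of $x\mapsto P_t(x,B)$ are fine and only make explicit what the paper takes for granted.
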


\begin{proof}
For a Markov kernel $P$ and a finite signed measure $\bar\pi$, positivity gives the measure inequality
\begin{equation}\label{eq:variation-kernel}
 |\bar\pi P|\le |\bar\pi|P.
\end{equation}
Indeed, for every Borel set $B$,
$|\int P(x,B)\,d\bar\pi(x)|\le\int P(x,B)\,d|\bar\pi|(x)$, and the same domination holds for finite measurable partitions, hence for total variation. If $\bar\pi P_t=\bar\pi$, then \eqref{eq:variation-kernel} gives $|\bar\pi|\le|\bar\pi|P_t$. Both positive measures have total mass $|\bar\pi|(E)$, because $P_t(x,E)=1$. Thus the domination is actually equality: if finite positive measures $\alpha\le\beta$ have $\alpha(E)=\beta(E)$, then $\beta-\alpha$ is a positive measure with total mass zero, hence vanishes. Applying this with $\alpha=|\bar\pi|$ and $\beta=|\bar\pi|P_t$ gives
\(
 |\bar\pi|P_t=|\bar\pi|.
\)

Consequently the Jordan components $\bar\pi^+=\tfrac12(|\bar\pi|+\bar\pi)$ and $\bar\pi^-=\tfrac12(|\bar\pi|-\bar\pi)$ are invariant positive finite measures. Each nonzero component, after normalization by its total mass, is an invariant probability and therefore equals $\pi_0$, which is unique. Thus
\(
 \bar\pi=(\bar\pi^+(E)-\bar\pi^-(E))\pi_0=\bar\pi(E)\pi_0.
\)
\end{proof}

The preceding \cref{prop:semigroup-invariance,prop:interior-identification} identifies the uniqueness of the interior measure, but an exact same assessment for boundary measure is not yet established.
Therefore, by subtracting the appropriate scalar multiple of the stationary BAR vector, the
remaining signed tuple has zero interior measure. Thus the only possible
obstruction to the signed uniqueness of BAR solution is a purely boundary one: a collection of
finite signed measures \((\eta_i)_{i=1}^d\), with \(\eta_i\) supported on \(F_i\),
whose boundary pairing vanishes, i.e. $\sum_{i=1}^d \int_{F_i} D_i f\,d\eta_i=0$ against every test function $f\in C_b^2(E)$. If such boundary measures exist, adding them to an existing solution doesn't change the validity of the solution.
The next proposition shows that no such nontrivial boundary annihilator exists, ruling out any other signed-BAR solutions.

The proof is local on the boundary stratification and uses the nonsingular
\(M\)-matrix assumption only through the invertibility of the principal
reflection blocks \(R_{AA}\). On a stratum \(S_A\), the active oblique derivatives
are determined by the active normal jet:
   $(D_i f|_{S_A})_{i\in A}=R_{AA}^T a$, where
   $a=(\partial_{x_j}f|_{S_A})_{j\in A}$.
Since \(R_{AA}\) is invertible, we can prescribe these oblique derivatives
independently. In particular, choosing
   $a=R_{AA}^{-T}e_k\,\psi$
gives
   $D_i f|_{S_A}=\delta_{ik}\psi$ for $i\in A$, and
for any test function \(\psi\in C_c^\infty(S_A)\). This isolates the \(k\)-th boundary
measure on \(S_A\). An induction over the codimension \(|A|\) removes all
lower-stratum contributions and forces \(\eta_k|_{S_A}=0\). Since both \(A\)
and \(k\in A\) are arbitrary, all boundary measures vanish.

\begin{proposition}[Pure boundary injectivity]\label{prop:pure-boundary}
Let $\eta_i\in\cM(F_i)$ be finite signed measures. If
\begin{equation}\label{eq:pure-boundary}
 \sum_{i=1}^d\int_{F_i}D_i f\,d\eta_i=0,
 \qquad f\in C_b^2(E),
\end{equation}
then $\eta_i=0,$ for all $i=1\ldots,d.$
\end{proposition}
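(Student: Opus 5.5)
We argue by induction over the boundary strata $S_A$, ordered by increasing codimension $|A|$. The inductive claim is: for every $k$ and every $A\ni k$ with $|A|\le m$, the restriction $\eta_k|_{S_A}$ vanishes. Since $F_k=\bigsqcup_{A\ni k}S_A$ is a disjoint Borel decomposition, the case $m=d$ gives $\eta_k=0$.

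\emph{Local test functions.} Fix $A$ with $|A|=m$, an index $k\in A$, and $\psi\in C_c^\infty(S_A)$, where $S_A$ is identified with an open subset of $\{x_A=0\}\cong\R^{d-m}$ in the variables $x_{A^c}$. Put $b=R_{AA}^{-T}e_k\in\R^{A}$, which exists by \cref{lem:Mmatrix}. Choose a cutoff $\chi\in C_c^\infty(\R^A)$ with $\chi=1$ near $0$ and $\nabla\chi(0)=0$, and set
\[
 f(x)=\Bigl(\sum_{j\in A}b_jx_j\Bigr)\chi(x_A/\delta)\,\psi(x_{A^c}).
\]
This $f$ is compactly supported and smooth on $\R^d$, so $f\in C_b^2(E)$.

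\emph{Computing the boundary terms.} On $S_A$ we have $x_A=0$, so $f=0$ there and every tangential derivative of $f$ vanishes. The only nonzero derivatives are the normal ones, $\partial_jf=b_j\psi$ for $j\in A$. Hence for $i\in A$,
\[
 D_if=\sum_{j\in A}R_{ji}b_j\psi=(R_{AA}^Tb)_i\psi=\delta_{ik}\psi \quad\text{on } S_A.
\]
It remains to control the contributions from the other strata.

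\emph{The main obstacle.} The difficulty is that $\supp f\cap\partial E$ also meets strata $S_B$ other than $S_A$.

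\begin{enumerate}[label=\textup{(\alph*)}]
\item Let $\delta$ be smaller than the distance from $\supp\psi$ to $\{x_j=0\}$ for every $j\notin A$. Then no point of $\supp f\cap\partial E$ has a coordinate $x_j=0$ with $j\notin A$. So the only strata that can meet $\supp f$ are $S_B$ with $B\subseteq A$.
\item For $B\subsetneq A$, we have $|B|<m$, so the induction hypothesis gives $\eta_i|_{S_B}=0$ for all $i\in B$. These strata contribute nothing.
\item For $i\notin A$, the support of $\eta_i$ lies in $F_i$, which does not meet $\supp f$ by (a). These terms also vanish.
\end{enumerate}

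Therefore \eqref{eq:pure-boundary} reduces to
\[
 \int_{S_A}\psi\,d\eta_k=0\qquad\text{for all }\psi\in C_c^\infty(S_A).
\]
Since $S_A$ is locally compact and $\eta_k|_{S_A}$ is a finite Radon measure on it, this forces $\eta_k|_{S_A}=0$.

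\emph{Base case and conclusion.} The base case $|A|=1$ is the same computation, with no lower strata to remove. After the induction over all $A$, each $\eta_k$ vanishes on every stratum $S_A$ with $k\in A$, and hence $\eta_k=0$.
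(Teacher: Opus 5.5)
Your proof is correct and is essentially the paper's argument. It uses the same induction on $|A|$ and the same test function $f=\bigl(\sum_{j\in A}b_jx_j\bigr)\chi(x_A/\delta)\psi(x_{A^c})$ with $b=R_{AA}^{-T}e_k$, giving $D_if|_{S_A}=\delta_{ik}\psi$. It also uses the same support reduction to the strata $S_B$ with $B\subseteq A$. The smallness condition on $\delta$ in step (a) is harmless but unnecessary, because the factor $\psi(x_{A^c})$ already keeps $\supp f$ away from every $F_j$ with $j\notin A$.
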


\begin{proof}
For $A\subset J$ and $i\in A$, let $ \eta_i^A=\eta_i|_{S_A}.$ Then $\eta_i=\sum_{A\ni i}\eta_i^A$ as a finite sum of mutually singular signed measures. We prove by induction on $n=|A|$ that
\begin{equation}\label{eq:induction-claim}
 \eta_i^A=0
 \quad\text{for every }A\subset J\text{ with }|A|=n
 \text{ and every }i\in A.
\end{equation}

Assume the claim has been proved for all strata of cardinality less than $n$, and fix $A$ with $|A|=n$. Let $k\in A$ and let $\psi\in C_c^\infty(S_A)$. Write points as $x=(x_A,y)$, where $y=x_{A^c}\in(0,\infty)^{A^c}$. We identify $\psi$ with its extension by zero to $\R^{A^c}$; this extension is smooth because $\supp\psi$ is compact in the open orthant. Choose a tangential cutoff $\vartheta\in C_c^\infty((0,\infty)^{A^c})$ that equals one on a neighborhood of $\supp\psi$. When $A=J$, interpret the tangential space as a point and put $\vartheta=1$. Choose a normal cutoff $\zeta\in C_c^\infty(\R^A)$ that equals one near the origin. Define the $A$-vector
\begin{equation}\label{eq:a-vector}
 a(y)=R_{AA}^{-T}e_k\,\psi(y)
\end{equation}
and the test function
\begin{equation}\label{eq:boundary-test}
 f(x_A,y)=\zeta(x_A)\vartheta(y)
 \sum_{j\in A}x_j a_j(y).
\end{equation}
After shrinking the support of $\vartheta$ if necessary, $f$ is supported away from every face $F_j$ with $j\notin A$. At a point of $S_A$, the tangential derivatives of $f$ vanish and
\begin{equation}\label{eq:active-jet}
 \partial_{x_j}f(0,y)=a_j(y),
 \qquad j\in A.
\end{equation}
Therefore, for $i\in A$,
\begin{equation}\label{eq:prescribed-oblique-jet}
 D_i f|_{S_A}
 =\sum_{j\in A}R_{ji}a_j
 =(R_{AA}^Ta)_i
 =\delta_{ik}\psi.
\end{equation}

The support condition implies that the only boundary strata meeting $\supp f$ are $S_C$ with $\varnothing\ne C\subset A$. The contributions from $|C|<n$ vanish by the induction hypothesis. Hence \eqref{eq:pure-boundary} and \eqref{eq:prescribed-oblique-jet} give
\[
 0=\sum_{i\in A}\int_{S_A}D_i f\,d\eta_i^A
 =\int_{S_A}\psi\,d\eta_k^A.
\]
Since $\psi$ is arbitrary, $\eta_k^A=0$. Since $k\in A$ was arbitrary, the induction step is complete. The base case $n=1$ is the same argument with no lower strata. Thus all $\eta_i^A$ vanish and hence all $\eta_i$ vanish.
\end{proof}

Now, the uniqueness of Signed BAR solution in the Harrison-Reiman Class is the natural conclusion of \cref{prop:semigroup-invariance}-\cref{prop:pure-boundary}.

\begin{proof}[Proof of \cref{prop:RI-criterion}]
Assume \eqref{eq:RI} for the finite signed BAR tuple $(\bar\pi,\bar\nu_1,\ldots,\bar\nu_d)$. By \cref{prop:semigroup-invariance}, we know the signed measure $\bar\pi$ is invariant under the semigroup, \emph{i.e.} $\bar\pi P_t=\bar\pi$ holds for $t\ge 0$. Then by \cref{prop:interior-identification}, we have the uniquness of the interior measure $\bar\pi=c\pi_0$ for $c=\bar\pi(E)$.

Define $\eta_i=\bar\nu_i-c\nu_i^0.$ Subtract $c$ times the stationary BAR \eqref{eq:intro-stationary-BAR} from the signed BAR \eqref{eq:signed-BAR}. The interior measures cancel, and we obtain $\sum_{i=1}^d\int_{F_i}D_i f\,d\eta_i=0,$ for all test function $f\in C_b^2(E).$ \Cref{prop:pure-boundary} gives $\eta_i=0$ for every $i$, hence $\bar\nu_i=c\nu_i^0$. This proves \cref{prop:RI-criterion}.
\end{proof}

Finally, we finish the proof of the signed-BAR problem.
\begin{proof}[Proof of \cref{thm:main}]
\Cref{thm:resolvent-insertion} proves \eqref{eq:RI} for every finite signed BAR tuple. \Cref{prop:RI-criterion} converts \eqref{eq:RI} into full signed BAR uniqueness. Therefore \cref{thm:main} follows.
\end{proof}

\section{Failure in the completely \texorpdfstring{$S$}{S} class}\label{sec:sharpness}

%The preceding proof uses two matrix facts that are automatic for nonsingular $M$-matrices: the derivative projection is defined by $R_{AA}^{-1}$ on every active set, and the pure boundary identity is killed by prescribing oblique jets through $R_{AA}^{-T}$. This section shows that these facts are not technical conveniences. In the larger completely-$\mathcal{S}$ class, a singular proper principal block gives an explicit signed boundary gauge. After a zero-potential correction, the gauge becomes a finite signed BAR tuple whose interior coordinate has total mass zero but is not zero.

In this section, we demonstrate a family of counterexamples in the completely $\mathcal{S}$ class due to the singular proper active block of the relection matrix. The negative construction is summarized in \cref{fig:counterexample-architecture}.  It has two parts: a boundary gauge identity on the singular stratum, followed by a zero-potential correction that extends the resulting centered source into the interior and adds the matching boundary occupation potential. Recall that a square matrix $A$ is an $S$ matrix if there exists a vector $u>0$ such that $Au>0$, and is completely-$\mathcal{S}$ if every principal submatrix is an $S$ matrix. completely-$\mathcal{S}$ reflection matrices are the natural existence class for orthant SRBMs, but they need not have invertible principal blocks.

\begin{figure}[t]
\centering
\resizebox{\textwidth}{!}{%
\begin{tikzpicture}[
  x=1cm,y=1cm,
  font=\footnotesize,
  text=proofink,
  >=Latex,
  box/.style={
    draw=proofink!48,
    rounded corners=3pt,
    line width=.55pt,
    fill=white,
    align=center,
    inner xsep=6pt,
    inner ysep=6pt,
    execute at begin node={\hyphenpenalty=10000\exhyphenpenalty=10000\emergencystretch=1.5em}
  },
  algebra/.style={box, draw=proofred!80!black, fill=proofredbg, text width=5.40cm, minimum height=2.25cm},
  gauge/.style={box, draw=proofgold!85!black, fill=proofgoldbg, text width=5.80cm, minimum height=2.45cm},
  source/.style={box, draw=proofpurple!85!black, fill=proofpurplebg, text width=6.50cm, minimum height=2.40cm},
  potential/.style={box, draw=proofblue!78!black, fill=proofbluebg, text width=6.05cm, minimum height=2.45cm},
  correction/.style={box, draw=proofgreen!78!black, fill=proofgreenbg, text width=6.05cm, minimum height=2.45cm},
  combine/.style={box, draw=proofink!45, fill=proofgraybg, text width=12.85cm, minimum height=1.75cm, font=\scriptsize},
  final/.style={box, draw=proofgold!85!black, fill=proofgoldbg, text width=10.40cm, minimum height=1.45cm},
  fail/.style={box, draw=proofred!80!black, fill=proofredbg, text width=10.40cm, minimum height=1.15cm, font=\scriptsize},
  arr/.style={-{Latex[length=2.35mm,width=1.75mm]}, line width=.65pt, draw=proofink!72},
  dasharr/.style={-{Latex[length=2.35mm,width=1.75mm]}, line width=.70pt, dashed, draw=proofpurple!85!black},
  lab/.style={font=\scriptsize, fill=white, inner xsep=3pt, inner ysep=1pt, text=proofmuted},
  glabel/.style={font=\scriptsize\scshape, text=proofmuted, fill=white, inner xsep=4pt, inner ysep=1pt}
]

\node[algebra] (block) at (-6.75,0) {\textbf{Singular active block}\\[-1pt]
Choose $\varnothing\ne A\subsetneq J$ and $0\ne v\in\ker R_{AA}.$
With $T=J\setminus A$, put $w:=R_{TA}v\ne0.$ The nonzero vector $w$ is tangent to the stratum $S_A$.};

\node[gauge] (gauge) at (0,0) {\textbf{Boundary gauge on $S_A$}\\[-1pt]
For $y\in(0,\infty)^T$ and $\iota_A(y)\in S_A$, set
\[
  d\zeta_i(\iota_A(y))=v_i\varphi(y)\,dy,
  \qquad i\in A,
\]
and set $\zeta_i=0$ for $i\notin A$.};

\node[algebra] (cancel) at (6.75,0) {\textbf{Normal components cancel}\\[-1pt]
Combining active faces gives
\[
 \sum_{i\in A}v_iR_i=(R_{AA}v,R_{TA}v)=(0,w).
\]
Thus the gauge sees only the tangential derivative $w\cdot\nabla_T f$ on $S_A$.};

\node[source] (source) at (0,-3.65) {\textbf{Tangential integration by parts}\\[-1pt]
Because $\varphi\in C_c^\infty((0,\infty)^T)$,
\[
\begin{aligned}
 \sum_i\int_{F_i}D_i f\,d\zeta_i&=\int_E f\,d\chi,\\[-1pt]
 d\chi(\iota_A(y))&=-(w\cdot\nabla_T\varphi(y))\,dy .
\end{aligned}
\]
The source is supported on $S_A$, nonzero, and centered: $\chi(E)=0$.};

\node[potential] (pot) at (-3.65,-7.05) {\textbf{Interior zero-potential}\\[-1pt]
Spread the centered source by the reflected semigroup:
\[
  \bar\pi=\int_0^\infty \chi P_t\,dt .
\]
Exponential ergodicity and $\chi(E)=0$ make this finite and give $\bar\pi(E)=0$.};

\node[correction] (theta) at (3.65,-7.05) {\textbf{Boundary occupation correction}\\[-1pt]
Use the one-unit regulator kernels $K_i$ and set
\[
  \theta_i=\sum_{n\ge0}(\chi P_n)K_i .
\]
The one-step regulator bound makes each $\theta_i$ finite on $F_i$.};

\node[combine] (poisson) at (0,-10.25) {\textbf{Poisson identity cancels the source}\\[-1pt]
It\^o's formula over integer intervals gives
\[
  \int_E Lf\,d\bar\pi+\sum_i\int_{F_i}D_i f\,d\theta_i=-\int_E f\,d\chi .
\]
Adding the boundary-gauge identity from the upper half cancels $\int f\,d\chi$.};

\node[final] (tuple) at (0,-12.85) {\textbf{Signed BAR tuple with zero interior mass}\\[-1pt]
\[
  \bar\nu_i:=\theta_i+\zeta_i,
  \qquad
  \int_E Lf\,d\bar\pi+\sum_i\int_{F_i}D_i f\,d\bar\nu_i=0,
  \qquad
  \bar\pi(E)=0.
\]};

\node[fail] (failure) at (0,-15.00) {Since $\bar\pi\ne0$ but $\bar\pi(E)=0$ whereas $\pi_0(E)=1$, the interior coordinate cannot be a scalar multiple of the stationary probability.  Varying $\varphi$ on disjoint supports gives the infinite-dimensional failure.};

\draw[arr] (block.east) -- (gauge.west);
\draw[arr] (gauge.east) -- (cancel.west);
\draw[arr] (cancel.south) to[out=-90,in=20] (source.east);
\draw[arr] (gauge.south) -- (source.north);
\draw[arr] (block.south) to[out=-90,in=160] (source.west);
\draw[arr] (source.south west) to[out=-100,in=90] (pot.north);
\draw[arr] (source.south east) to[out=-80,in=90] (theta.north);
\draw[arr] (pot.south) -- (poisson.north west);
\draw[arr] (theta.south) -- (poisson.north east);
\draw[arr] (poisson.south) -- (tuple.north);
\draw[arr] (tuple.south) -- (failure.north);
\draw[dasharr] (source.west) to[out=180,in=180,looseness=1.05] node[lab,left] {$+\int f\,d\chi$} (poisson.west);

\begin{scope}[on background layer]
  \node[draw=proofred!15, fill=proofred!4, rounded corners=5pt, inner sep=10pt, fit=(block)(gauge)(cancel)(source)] (boundarygrp) {};
  \node[glabel, anchor=south west] at ($(boundarygrp.north west)+(0.25,-0.02)$) {Boundary gauge on a singular stratum};
  \node[draw=proofblue!15, fill=proofblue!4, rounded corners=5pt, inner sep=10pt, fit=(pot)(theta)(poisson)(tuple)(failure)] (interiorgrp) {};
  \node[glabel, anchor=south west] at ($(interiorgrp.north west)+(0.25,-0.02)$) {Extension into the interior by a zero-potential};
\end{scope}

\end{tikzpicture}%
}
\caption{\textbf{Counterexample construction in the completely-$\mathcal{S}$ class.}  The upper group is the boundary algebra of \cref{prop:singular-block-gauge}: a gauge supported on a lower-dimensional stratum has its active normal components killed by $R_{AA}v=0$, leaving a tangential derivative and hence a centered source $\chi$.  The lower group is the zero-potential correction of \cref{prop:zero-potential-correction}: the semigroup potential $\bar\pi=\int_0^\infty \chi P_t\,dt$ and the boundary occupation potentials $\theta_i$ cancel the source and produce a nonzero zero-mass signed BAR tuple.}
\label{fig:counterexample-architecture}
\end{figure}

\subsection{A singular-block boundary gauge}

In this section, we utilize the singular proper active block of the reflection matrix to construct a nonzero null direction on that block and place a signed boundary gauge on the corresponding lower-dimensional boundary stratum. The singularity makes the active normal reflection components cancel, so the gauge leaves only a tangential derivative along the stratum. After integration by parts, this tangential derivative becomes a finite nonzero centered signed source supported on the same stratum. This source will be cancelled by the zero-potential correction in the next subsection.

Let $J=\{1,\ldots,d\}$. For a nonempty proper set $A\subsetneq J$, put $T=J\setminus A$ and write $R_{BC}$ for the submatrix with rows in $B$ and columns in $C$. Define the open stratum
$S_A=\{x\in E:x_i=0\text{ for }i\in A,\ x_j>0\text{ for }j\in T\}.$ We identify $S_A$ with $(0,\infty)^T$ through the embedding $\iota_A:(0,\infty)^T\to E$ that inserts zeros in the coordinates indexed by $A$.

\begin{proposition}[Boundary gauge from a singular principal block]\label{prop:singular-block-gauge}
Assume that $R$ is nonsingular and that $R_{AA}$ is singular for some nonempty proper set $A\subsetneq J$. Choose
\[
 0\ne v\in\ker R_{AA},\qquad w=R_{TA}v\in\mathbb R^T.
\]
Then $w\ne0$. Let $\phi\in C_c^\infty((0,\infty)^T)$ satisfy $w\cdot\nabla_T\phi\not\equiv0$. For $i\in A$, define a finite signed measure $\zeta_i$ on $F_i$ by
\begin{equation}\label{eq:ce-zeta-general}
 \int_{F_i}g(x)\,d\zeta_i(x)
 =v_i\int_{(0,\infty)^T}g(\iota_A(y))\phi(y)\,dy,
\end{equation}
and set $\zeta_i=0$ for $i\notin A$. Define $\chi\in\cM(E)$ by
\begin{equation}\label{eq:ce-source-general}
 \int_E g(x)\,d\chi(x)
 =-\int_{(0,\infty)^T}g(\iota_A(y))
       \bigl(w\cdot\nabla_T\phi(y)\bigr)\,dy.
\end{equation}
Then $\chi$ is finite, nonzero, supported on $S_A$, and satisfies $\chi(E)=0$. Moreover,
\begin{equation}\label{eq:ce-defect-identity}
 \sum_{i=1}^d\int_{F_i}D_i f\,d\zeta_i
 =\int_E f\,d\chi,
 \qquad f\in C_b^2(E).
\end{equation}
\end{proposition}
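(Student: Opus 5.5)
The argument has three parts: a linear-algebra step showing $w\ne0$, elementary properties of $\chi$, and the identity \eqref{eq:ce-defect-identity}, which follows from cancelling the normal components and then integrating by parts along the stratum.

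\emph{Step 1: $w\ne0$.} The column combination $\sum_{i\in A}v_iR_i$ has $A$-rows equal to $R_{AA}v=0$ and $T$-rows equal to $R_{TA}v=w$. Hence $R\tilde v=(0,w)$, where $\tilde v\in\mathbb R^d$ is $v$ extended by zero on $T$. If $w=0$, then $R\tilde v=0$ with $\tilde v\ne0$, which contradicts nonsingularity of $R$. So $w\ne0$. In particular, a $\phi$ with $w\cdot\nabla_T\phi\not\equiv0$ exists: for instance, take a bump translated along $w$.

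\emph{Step 2: properties of $\chi$.} The density $-w\cdot\nabla_T\phi$ is smooth with compact support in $(0,\infty)^T$, so $\chi$ is a finite signed measure carried by $\iota_A(\supp\phi)\subset S_A$. It is nonzero by the choice of $\phi$. Its total mass is $-\int w\cdot\nabla_T\phi\,dy=0$, since $\phi$ has compact support. The measures $\zeta_i$ are finite for the same reason, and they are supported on $S_A\subset F_i$ for $i\in A$.

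\emph{Step 3: the identity \eqref{eq:ce-defect-identity}.} Fix $f\in C_b^2(E)$. Only the indices $i\in A$ contribute, so
\[
\sum_{i}\int_{F_i}D_if\,d\zeta_i=\int_{(0,\infty)^T}\Bigl(\sum_{i\in A}v_iR_i\Bigr)\cdot\nabla f(\iota_A(y))\,\phi(y)\,dy.
\]
Here $\nabla f$ on the boundary means the continuous extension, which the closed-domain convention provides. By Step 1 the integrand equals $\bigl(w\cdot\nabla_Tf(\iota_A(y))\bigr)\phi(y)$. The map $y\mapsto f(\iota_A(y))$ is $C^1$ on $(0,\infty)^T$, and its $T$-gradient is the restriction of $\nabla_Tf$; this holds because $f$ extends to a $C^2$ function on a neighborhood of $E$. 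Since $\phi$ has compact support in the open orthant, integrating by parts in $y$ produces no boundary terms and gives
\[
-\int f(\iota_A(y))\,w\cdot\nabla_T\phi(y)\,dy=\int_Ef\,d\chi.
\]

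The only point needing care is the claim that the tangential derivative of the restriction $f\circ\iota_A$ coincides with the restricted ambient gradient. This follows from the $C^2$-extension characterization stated in \cref{sec:notation}, so I do not expect a genuine obstacle.
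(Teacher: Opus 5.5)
Your proposal is correct and follows the paper's proof essentially step by step. You extend $v$ by zero to get $R\tilde v=(0,w)$, which forces $w\ne0$; you combine the face labels so that $R_{AA}v=0$ cancels the normal components; and you integrate the remaining tangential derivative by parts against the compactly supported $\phi$. The one cosmetic difference is that you compute $\chi(E)=-\int w\cdot\nabla_T\phi\,dy=0$ directly, whereas the paper obtains $\chi(E)=0$ by inserting a test function equal to one near $\iota_A(\supp\phi)$ into \eqref{eq:ce-defect-identity}.
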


\begin{proof}
If $w=0$, extend $v$ to $\widetilde v\in\mathbb R^d$ by setting its coordinates in $T$ equal to zero. Then
\[
 R\widetilde v=
 \begin{pmatrix}R_{AA}v\\ R_{TA}v\end{pmatrix}=0,
\]
contradicting the nonsingularity of $R$. Hence $w\ne0$, and a compactly supported smooth $\phi$ with nonzero directional derivative along $w$ exists.

For $f\in C_b^2(E)$, combine the face labels before integrating:
\begin{align*}
\sum_{i=1}^d\int_{F_i}D_i f\,d\zeta_i
 &=\int_{(0,\infty)^T}\phi(y)
       \left(\sum_{i\in A}v_iR_i\right)\cdot
       \nabla f(\iota_A(y))\,dy\\
 &=\int_{(0,\infty)^T}\phi(y)
       \left[(R_{AA}v)\cdot\nabla_A f(\iota_A(y))
             +(R_{TA}v)\cdot\nabla_T f(\iota_A(y))\right]dy\\
 &=\int_{(0,\infty)^T}\phi(y)w\cdot\nabla_T f(\iota_A(y))\,dy\\
 &=-\int_{(0,\infty)^T}
       \bigl(w\cdot\nabla_T\phi(y)\bigr)f(\iota_A(y))\,dy.
\end{align*}
There is no boundary term because $\phi$ is compactly supported in the open stratum. This proves \eqref{eq:ce-defect-identity}. Taking a test function equal to one on a neighborhood of $\supp\phi$ gives $\chi(E)=0$, and the choice of $\phi$ gives $\chi\ne0$.
\end{proof}

The singular block cancels the components normal to the active faces. The remaining vector $w$ is tangent to $S_A$, and tangential integration by parts turns the boundary gauge into the centered source $\chi$.

\subsection{Interior correction by a zero-potential}

In this section, we take that centered source and spread it through the reflected Brownian semigroup by a zero potential, while also adding the matching boundary occupation potentials generated by the regulator. Under exponential ergodicity and a one-step regulator bound, these potentials are finite. Ito’s formula then shows that the zero potential contributes exactly the negative of the source created in the previous section. Adding the original boundary gauge cancels the defect and produces a genuine finite signed BAR tuple. Its interior part has total mass zero but is not the zero measure, so it cannot be a scalar multiple of the stationary distribution, whose mass is one. 

Let $(P_t)_{t\ge0}$ be the transition semigroup of the SRBM, and let $Y=(Y_1,\ldots,Y_d)$ be its regulator. For a finite signed measure $\alpha$, we define the semigroup $(\alpha P_t)(B)=\int_E P_t(x,B)\,\alpha(dx).$ For each face define the  boundary occupation kernel
\begin{equation}\label{eq:ce-Ki}
 K_i(x,B)=\mathbb E_x\int_0^1 \1_B(Z(s))\,dY_i(s),
\end{equation}
which is supported on $F_i$.

\begin{proposition}[Interior correction by a zero-potential]\label{prop:zero-potential-correction}
Assume that the SRBM is a strong Markov process whose transition semigroup is Feller on $C_0(E)$, and that it has stationary probability $\pi_0$. Suppose that there are a locally bounded function $V:E\to[1,\infty)$ and constants $M,\kappa>0$ such that
\begin{equation}\label{eq:ce-ergodicity}
 \|P_t(x,\cdot)-\pi_0\|_{\TV}
 \le M V(x)e^{-\kappa t},
 \qquad x\in E,\ t\ge0.
\end{equation}
Suppose also that
\begin{equation}\label{eq:ce-regulator-bound}
 c_i:=\sup_{x\in E}\mathbb E_xY_i(1)<\infty,
 \qquad i=1,\ldots,d.
\end{equation}
Let $\chi\in\cM(E)$ satisfy $\chi(E)=0$ and $\int_E V\,d|\chi|<\infty,$ and let $\zeta_i\in\cM(F_i)$ satisfy
\begin{equation}\label{eq:ce-boundary-source}
 \sum_{i=1}^d\int_{F_i}D_i f\,d\zeta_i
 =\int_E f\,d\chi,
 \qquad f\in C_b^2(E).
\end{equation}
Define
\begin{align}
 \overline\pi&=\int_0^\infty \chi P_t\,dt,
                                                    \label{eq:ce-pi-potential}\\
 \theta_i&=\sum_{n=0}^\infty(\chi P_n)K_i,
                                                    \label{eq:ce-theta-potential}\\
 \overline\nu_i&=\theta_i+\zeta_i.              \label{eq:ce-nu-corrected}
\end{align}
Then all measures in \eqref{eq:ce-pi-potential}--\eqref{eq:ce-nu-corrected} are well defined in total variation and finite. They form a signed BAR tuple for all $f\in C_b^2(E)$, hence also for all $f\in C_c^2(E)$, and $\overline\pi(E)=0.$ If $\chi\ne0$, then $\overline\pi\ne0$.
\end{proposition}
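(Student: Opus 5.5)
The plan is to get finiteness from the ergodic bound, derive a ``Poisson identity'' for the pair $(\overline\pi,\theta)$ by applying It\^o's formula over unit time intervals, and then add the gauge identity \eqref{eq:ce-boundary-source} so that the source cancels.

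\emph{Finiteness.} Since $\chi(E)=0$, we have $\chi P_t=\int(P_t(x,\cdot)-\pi_0)\,\chi(dx)$. Hence \eqref{eq:ce-ergodicity} gives
\[
\|\chi P_t\|_{\TV}\le Me^{-\kappa t}\int V\,d|\chi|.
\]
So $\int_0^\infty\chi P_t\,dt$ converges in total variation, and $\overline\pi(E)=\int_0^\infty\chi(E)\,dt=0$. For $\theta_i$, the kernel $K_i$ has total-variation operator norm $\sup_x K_i(x,F_i)=c_i$ by \eqref{eq:ce-regulator-bound}. Therefore $\|(\chi P_n)K_i\|_{\TV}\le c_i\|\chi P_n\|_{\TV}$, which is summable geometrically. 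Measurability of $t\mapsto\chi P_t(B)$ follows from joint measurability of the transition function, so the Bochner/Fubini manipulations below are legitimate.

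\emph{Poisson identity.} Fix $f\in C_b^2(E)$. It\^o's formula on $[0,1]$ from $Z_0=x$, with expectations taken (the regulator terms are integrable by \eqref{eq:ce-regulator-bound} and the martingale has bounded integrand), gives
\[
P_1f(x)-f(x)=\int_0^1P_sLf(x)\,ds+\sum_i\int D_if\,dK_i(x,\cdot).
\]
Integrate this against $\chi P_n$ and sum over $n=0,\dots,N-1$. The left side telescopes to $\int f\,d(\chi P_N)-\int f\,d\chi$. The right side is $\int Lf\,d\bigl(\int_0^N\chi P_s\,ds\bigr)+\sum_i\int D_if\,d\bigl(\sum_{n<N}(\chi P_n)K_i\bigr)$, using the semigroup property $(\chi P_n)P_s=\chi P_{n+s}$. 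Since $\chi P_N\to0$ in total variation and $Lf$, $D_if$ are bounded, letting $N\to\infty$ gives
\[
\int Lf\,d\overline\pi+\sum_i\int D_if\,d\theta_i=-\int f\,d\chi .
\]
Adding \eqref{eq:ce-boundary-source} yields the BAR for $\overline\nu_i=\theta_i+\zeta_i$. The $C_c^2$ case is a special case.

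\emph{Nontriviality.} This is the step I expect to be the main obstacle. Suppose $\chi\ne0$ but $\overline\pi=0$. First try to show $\theta_i=0$; then the zero-interior BAR would force $\int f\,d\chi=0$ for all $f$, i.e.\ $\chi=0$, a contradiction. Without such a reduction, I would argue through the Feller resolvent. For $h\in C_0(E)$ and $\lambda>0$,
\[
\int R_\lambda h\,d\chi=\int_0^\infty e^{-\lambda t}\chi P_th\,dt .
\]
The key relation is $\overline\pi P_s-\overline\pi=-\int_0^s\chi P_t\,dt$, which follows by shifting the time integral. If $\overline\pi=0$, then $\int_0^s\chi P_th\,dt=0$ for every $s$. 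Continuity of $t\mapsto\chi P_th$ (strong continuity, Theorem-type Feller assumption) then gives $\chi P_th=0$ for all $t$, and at $t=0$ this gives $\chi(h)=0$ for all $h\in C_0(E)$. Hence $\chi=0$. This route avoids the $\theta_i$ issue entirely, so I would use it. The delicate points are justifying the shift identity in total variation, which uses the finiteness established above, and justifying continuity at $t=0$ when $\chi$ is paired against $C_0$ functions.
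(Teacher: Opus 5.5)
Your proof is correct. The finiteness bounds and the Poisson identity are essentially the paper's argument. The only cosmetic difference is that you apply It\^o's formula on $[0,1]$ and integrate the one-step identity against $\chi P_n$, using only Chapman--Kolmogorov. The paper instead applies It\^o up to time $N$ and then splits the regulator integral into unit intervals via the Markov property.

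The genuine difference is the injectivity step. The paper passes to the generator: for $h\in D(\mathcal A)$ it computes $\overline\pi(\mathcal A h)=\lim_{T\to\infty}\chi(P_Th-h)=-\chi(h)$, and then invokes density of $D(\mathcal A)$ in $C_0(E)$. You instead use the integrated identity $\overline\pi P_s-\overline\pi=-\int_0^s\chi P_t\,dt$, which is the paper's identity before differentiating at $s=0$. If $\overline\pi=0$, then $s^{-1}\int_0^s\chi(P_th)\,dt=0$, and letting $s\downarrow0$ gives $\chi(h)=0$ for every $h\in C_0(E)$ directly.

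Your route buys the following:
\begin{itemize}
\item It avoids the generator and the density theorem.
\item It needs only right-continuity $\chi(P_sh)\to\chi(h)$ as $s\downarrow0$, not continuity of $t\mapsto\chi(P_th)$ for $t>0$.
\item That right-continuity already follows from path continuity and bounded convergence for $h\in C_b(E)$, so the $C_0$-Feller hypothesis is not really used in this step.
\end{itemize}
The paper's version is shorter once semigroup theory is granted, and it matches the generator/resolvent language used later in the RI-defect discussion.

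The resolvent display and the attempted reduction to $\theta_i=0$ in your last paragraph play no role and can be dropped.
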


\begin{proof}
Because $\chi(E)=0$,
\[
 \chi P_t=\int_E\bigl(P_t(x,\cdot)-\pi_0\bigr)\,\chi(dx).
\]
Therefore
\begin{equation}\label{eq:ce-chiPt-bound}
 \|\chi P_t\|_{\TV}
 \le M e^{-\kappa t}\int_E V\,d|\chi|.
\end{equation}
The integral in \eqref{eq:ce-pi-potential} converges in total variation, and $\overline\pi(E)=0$ because $\chi P_t(E)=\chi(E)=0$.

For a finite signed measure $\alpha$ and the positive kernel $K_i$,
\begin{equation}\label{eq:ce-kernel-bound}
 \|\alpha K_i\|_{\TV}
 \le\int_EK_i(x,E)\,|\alpha|(dx)
 \le c_i\|\alpha\|_{\TV}.
\end{equation}
Combining \eqref{eq:ce-chiPt-bound} at integer times with \eqref{eq:ce-kernel-bound} proves absolute convergence of \eqref{eq:ce-theta-potential}. Each $\theta_i$ is supported on $F_i$.

Fix $f\in C_b^2(E)$. It\^o's formula up to an integer time $N$, followed by integration against the Jordan decomposition of $\chi$, gives
\begin{equation}\label{eq:ce-ito-chi}
 \chi P_N(f)-\chi(f)
 =\int_0^N(\chi P_t)(Lf)\,dt
  +\sum_{i=1}^d\int_E\mathbb E_x\int_0^N D_i f(Z(t))\,dY_i(t)\,\chi(dx).
\end{equation}
Splitting the boundary integral into unit intervals and using the strong Markov property yields
\begin{equation}\label{eq:ce-boundary-sum}
 \int_E\mathbb E_x\int_0^N D_i f(Z(t))\,dY_i(t)\,\chi(dx)
 =\sum_{n=0}^{N-1}\bigl((\chi P_n)K_i\bigr)(D_i f).
\end{equation}
The estimates above justify passage to the limit. Since $\chi P_N(f)\to0$, equations \eqref{eq:ce-ito-chi} and \eqref{eq:ce-boundary-sum} give
\[
 \int_E Lf\,d\overline\pi
 +\sum_{i=1}^d\int_{F_i}D_i f\,d\theta_i
 =-\int_E f\,d\chi.
\]
Adding \eqref{eq:ce-boundary-source} proves the BAR for $(\overline\pi,\overline\nu_1,\ldots,\overline\nu_d)$.

It remains to prove that the zero potential is injective on this centered class. Let $\mathcal A$ be the generator of the Feller semigroup on $C_0(E)$. For $h\in D(\mathcal A)$, semigroup differentiation gives
\begin{align*}
 \overline\pi(\mathcal A h)
 =\lim_{T\to\infty}\int_0^T\chi P_t(\mathcal A h)\,dt=\lim_{T\to\infty}\chi(P_T h-h)=-\chi(h),
\end{align*}
where the last limit follows from \eqref{eq:ce-chiPt-bound}. If $\overline\pi=0$, then $\chi(h)=0$ for every $h\in D(\mathcal A)$. To conclude that $\chi=0$ we use the following density theorem.

\begin{theorem}[{\normalfont\cite[Chapter~1, Section~2]{EthierKurtz1986}}]
 The infinitesimal generator $\mathcal A$ of a strongly continuous contraction semigroup on a Banach space has domain $D(\mathcal A)$ dense in that space. In particular, for a Feller semigroup on $C_0(E)$, the domain $D(\mathcal A)$ is dense in $C_0(E)$.    
\end{theorem}

Its hypothesis is exactly the standing assumption of the present proposition: $(P_t)$ is Feller on $C_0(E)$. Hence $D(\mathcal A)$ is dense in $C_0(E)$.  Since \(\mathcal M(E)=C_0(E)^*\), the identity \(\chi(h)=0\) on \(D(\mathcal A)\) implies \(\chi=0\). Therefore \(\chi\ne0\) implies \(\overline\pi\ne0\).
\end{proof}

\begin{theorem}[Failure in the completely-$\mathcal{S}$ class]\label{thm:singular-block-obstruction}
Consider an SRBM in $E=\mathbb R_+^d$ with positive definite covariance matrix, nonsingular completely-$\mathcal{S}$ reflection matrix $R$, and stationary probability $\pi_0$. Assume that the process is strong Markov, is Feller on \(C_0(E)\), and satisfies the quantitative recurrence conditions \eqref{eq:ce-ergodicity} and \eqref{eq:ce-regulator-bound}. If $R_{AA}$ is singular for some nonempty proper set $A\subsetneq J$, then signed BAR uniqueness fails.

More precisely, there exists a finite signed BAR tuple $(\overline\pi,\overline\nu_1,\ldots,\overline\nu_d)$ such that
\begin{equation}\label{eq:ce-nontrivial-zero-mass}
 \overline\pi(E)=0,
 \qquad
 \overline\pi\ne0.
\end{equation}
Consequently $\overline\pi$ is not a scalar multiple of $\pi_0$.
\end{theorem}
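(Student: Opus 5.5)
The theorem follows by chaining \cref{prop:singular-block-gauge} into \cref{prop:zero-potential-correction}; the plan is to check that the hypotheses of the second are met by the output of the first.

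First, fix the singular block. By assumption $R$ is nonsingular and $R_{AA}$ is singular for some nonempty proper $A\subsetneq J$. Choose $0\ne v\in\ker R_{AA}$ and set $w=R_{TA}v$. \Cref{prop:singular-block-gauge} gives $w\ne0$. Pick $\phi\in C_c^\infty((0,\infty)^T)$ with $w\cdot\nabla_T\phi\not\equiv0$; for instance, any bump function works after a translation or rescaling, since a compactly supported function cannot be constant along $w\ne0$ unless it vanishes. The proposition then yields finite signed boundary measures $\zeta_i$ and a source $\chi$ that is finite, nonzero, supported on $S_A$, and satisfies $\chi(E)=0$. It also gives the defect identity \eqref{eq:ce-defect-identity}, which is exactly hypothesis \eqref{eq:ce-boundary-source}.

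Second, verify the remaining hypotheses of \cref{prop:zero-potential-correction}. Strong Markov, Feller on $C_0(E)$, existence of $\pi_0$, \eqref{eq:ce-ergodicity} and \eqref{eq:ce-regulator-bound} are all assumed in the theorem. The only new point is the moment condition $\int_E V\,d|\chi|<\infty$. Here $|\chi|$ has density $|w\cdot\nabla_T\phi|$ with respect to the image of Lebesgue measure under $\iota_A$, so it is supported on the compact set $\iota_A(\supp\phi)$. Since $V$ is locally bounded, it is bounded on this compact set, and the integral is finite. (If local boundedness were only assumed in a weaker sense, one would first choose $\supp\phi$ inside a region where $V$ is bounded. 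The stated hypothesis makes this unnecessary.)

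Third, conclude. \Cref{prop:zero-potential-correction} produces the finite signed tuple $\overline\pi=\int_0^\infty\chi P_t\,dt$ and $\overline\nu_i=\theta_i+\zeta_i$. This tuple satisfies the BAR for all $f\in C_b^2(E)$ and has $\overline\pi(E)=0$. Because $\chi\ne0$, the injectivity part of that proposition gives $\overline\pi\ne0$, which is \eqref{eq:ce-nontrivial-zero-mass}. Finally, if $\overline\pi=c\pi_0$, evaluating total mass gives $0=c$, hence $\overline\pi=0$, a contradiction. So signed BAR uniqueness fails.

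There is no real obstacle, because all the analysis sits in the two propositions. The only care needed is the moment condition on $V$ against $|\chi|$, which follows from compact support and local boundedness as above.
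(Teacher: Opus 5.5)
Your proposal is correct and is essentially the paper's proof: you chain \cref{prop:singular-block-gauge} into \cref{prop:zero-potential-correction}, verify $\int_E V\,d|\chi|<\infty$ from the compact support of $\chi$ and the local boundedness of $V$, and rule out $\overline\pi=c\pi_0$ by comparing total masses. You also note that every nonzero $\phi\in C_c^\infty((0,\infty)^T)$ satisfies $w\cdot\nabla_T\phi\not\equiv0$; this is a correct small detail that the paper leaves implicit.
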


\begin{proof}
Apply \cref{prop:singular-block-gauge}. Its source $\chi$ is compactly supported, and the local boundedness of $V$ gives $\int_E V\,d|\chi|<\infty$. \Cref{prop:zero-potential-correction} then produces the required BAR tuple. If $\overline\pi=c\pi_0$, total masses give $c=0$, contradicting $\overline\pi\ne0$.
\end{proof}

\begin{corollary}[Infinite-dimensional failure]\label{cor:infinite-dimensional-failure}
Under the assumptions of \cref{thm:singular-block-obstruction}, the set of interior BAR coordinates of total mass zero contains an infinite-dimensional linear subspace. In particular, the full vector space of finite signed BAR tuples is infinite-dimensional.
\end{corollary}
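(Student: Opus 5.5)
The plan is to exploit the freedom in the choice of the test density $\phi$ in \cref{prop:singular-block-gauge}, together with the linearity of the construction in \cref{prop:zero-potential-correction}. Fix the singular block $A$, the kernel vector $v$, and $w=R_{TA}v\ne0$ as in \cref{prop:singular-block-gauge}. For $\phi\in C_c^\infty((0,\infty)^T)$ write $\zeta_i^\phi$ and $\chi^\phi$ for the gauge and source defined there, and write $(\overline\pi^\phi,\overline\nu_i^\phi)$ for the tuple produced by \cref{prop:zero-potential-correction}. The first step is to observe that all these maps are linear in $\phi$. The maps $\phi\mapsto\zeta_i^\phi$ and $\phi\mapsto\chi^\phi$ are linear by \eqref{eq:ce-zeta-general}--\eqref{eq:ce-source-general}. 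The map $\chi\mapsto\overline\pi=\int_0^\infty\chi P_t\,dt$ is linear, and so is $\chi\mapsto\theta_i$. Hypothesis \eqref{eq:ce-boundary-source} holds for every $\phi$ by \eqref{eq:ce-defect-identity}, even when $w\cdot\nabla_T\phi\equiv0$, and $\int V\,d|\chi^\phi|<\infty$ holds by local boundedness of $V$. Hence $\Psi:\phi\mapsto(\overline\pi^\phi,\overline\nu^\phi)$ is a linear map from $C_c^\infty((0,\infty)^T)$ into the space of finite signed BAR tuples, and each $\overline\pi^\phi$ has $\overline\pi^\phi(E)=0$.

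The second step is to show that the image of $\phi\mapsto\overline\pi^\phi$ is infinite-dimensional. The injectivity argument at the end of the proof of \cref{prop:zero-potential-correction} gives $\overline\pi^\phi(\mathcal A h)=-\chi^\phi(h)$ for $h\in D(\mathcal A)$. Consequently $\overline\pi^\phi=0$ forces $\chi^\phi=0$, so the map $\chi\mapsto\overline\pi$ is injective on centered sources with $\int V\,d|\chi|<\infty$. It therefore suffices to show that $\{\chi^\phi\}$ spans an infinite-dimensional space.

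For this, pick a sequence of pairwise disjoint open balls $B_n\Subset(0,\infty)^T$, which exist since $T\ne\varnothing$. In each ball choose $\phi_n\in C_c^\infty(B_n)$ with $w\cdot\nabla_T\phi_n\not\equiv0$, for instance a bump translated so that it is not constant along $w$. Then $\chi^{\phi_n}$ is a nonzero measure supported on $\iota_A(\overline{B_n})$, and these supports are pairwise disjoint. Measures with disjoint supports that are each nonzero are linearly independent. Indeed, if $\sum_{n\le N}c_n\chi^{\phi_n}=0$, restricting to $\iota_A(B_m)$ gives $c_m\chi^{\phi_m}=0$, hence $c_m=0$.

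By injectivity, $\overline\pi^{\phi_1},\overline\pi^{\phi_2},\dots$ are linearly independent. Their span is therefore an infinite-dimensional subspace of zero-mass interior BAR coordinates, and it embeds via $\Psi$ into the space of finite signed BAR tuples. This gives the second claim of the corollary.

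The only delicate point is verifying linearity and injectivity rigorously rather than formally. In particular, the potential must be applied to arbitrary finite linear combinations of the $\chi^{\phi_n}$, which are still compactly supported and centered, so \cref{prop:zero-potential-correction} applies to them directly. I expect no real obstacle beyond this bookkeeping.
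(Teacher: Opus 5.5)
Your proposal is correct and follows the paper's own proof. Both choose $\phi_m$ with pairwise disjoint supports, then use linearity of $\phi\mapsto\chi^\phi\mapsto\overline\pi^\phi$ and the injectivity identity $\overline\pi(\mathcal A h)=-\chi(h)$ from \cref{prop:zero-potential-correction}. This reduces linear independence of the $\overline\pi_m$ to that of the nonzero, disjointly supported sources $\chi_m$. Your added bookkeeping (that $\Psi$ maps linearly into $\ker\mathcal A$, and that independent interior coordinates force the tuple space to be infinite-dimensional) only makes explicit what the paper leaves implicit.
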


\begin{proof}
Choose functions $\phi_m\in C_c^\infty((0,\infty)^T)$ with pairwise disjoint supports and $w\cdot\nabla_T\phi_m\not\equiv0$. Let $\chi_m$ and $\overline\pi_m$ be the corresponding sources and zero potentials. If $\sum_{m=1}^N a_m\overline\pi_m=0$, linearity and the injectivity identity in the proof of \cref{prop:zero-potential-correction} imply $\sum_{m=1}^N a_m\chi_m=0$. The sources are nonzero and have pairwise disjoint supports, so every $a_m$ is zero.
\end{proof}

The theorem concerns a singular proper principal block. Singularity of an arbitrary rectangular or nonprincipal submatrix does not yield the cancellation $R_{AA}v=0$ needed in \cref{prop:singular-block-gauge}. Conversely, every principal block of a nonsingular $M$-matrix is nonsingular by \cref{lem:Mmatrix}, so this obstruction is absent from the class covered by \cref{thm:main}.

\subsection{A checkable three-dimensional family}

The general obstruction is useful only if the recurrence assumptions can be verified without solving the stationary distribution. The next criterion is a direct way to do this for a broad positive-reflection subclass. A $Z$ matrix means a matrix with nonpositive off-diagonal entries.

\begin{corollary}[A checkable completely-$\mathcal{S}$ subclass]\label{cor:checkable-completely-S}
Assume in addition that $R_{ii}=1$ and $R_{ij}\ge0$ for all $i,j$. Suppose there is a symmetric positive definite matrix $H$ such that $HR$ is a $Z$ matrix and $H\mu<0$ componentwise. If $R$ is nonsingular, completely $S$, and has a singular proper principal block, then signed BAR uniqueness fails and the conclusion of \cref{cor:infinite-dimensional-failure} holds.
\end{corollary}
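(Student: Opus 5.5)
The plan is to reduce \cref{cor:checkable-completely-S} to \cref{thm:singular-block-obstruction} and \cref{cor:infinite-dimensional-failure}. Those results are stated for a nonsingular completely-$\mathcal S$ matrix with a singular proper principal block, which we have by hypothesis. It therefore remains to verify the standing stochastic hypotheses: existence of an SRBM that is strong Markov and Feller on $C_0(E)$ with a stationary probability $\pi_0$, the exponential ergodicity bound \eqref{eq:ce-ergodicity} with a locally bounded $V\ge1$, and the one-step regulator bound \eqref{eq:ce-regulator-bound}.

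\textbf{Existence and Markov/Feller properties.} Existence and uniqueness in law for completely-$\mathcal S$ data follow from Taylor--Williams \cite{TaylorWilliams1993}, which also gives strong Markov and Feller continuity. For the $C_0$-Feller property I would use the quadratic Lyapunov function described below: it shows that $Z^x$ moves away from large $|x|$ only slowly, so $P_th(x)\to0$ as $|x|\to\infty$. Strong continuity follows from path continuity and bounded convergence, as in the proof of \cref{thm:regularity-SRBM}(ii).

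\textbf{Lyapunov function from $H$.} Take $\Phi(x)=x^THx$. On $F_i$ we have $D_i\Phi(x)=2(HR)_i\cdot x=2\sum_j x_j(HR)_{ji}$. This sum omits the $j=i$ term because $x_i=0$. Since $HR$ is a $Z$ matrix, every remaining off-diagonal entry is $\le0$, so $D_i\Phi\le0$ on $F_i$. In the interior, $L\Phi=2\mu^THx+\operatorname{tr}(H\Sigma)\le-2\delta|x|+C$ on $E$, where $\delta>0$ comes from $H\mu<0$ componentwise and $x\ge0$. Hence $\sqrt{\Phi}$, or better $V=\exp(\gamma\sqrt{1+\Phi})$ for small $\gamma>0$, satisfies a drift inequality $\mathcal A V\le -cV+b\mathbf 1_K$ with $K$ compact. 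The boundary terms have the right sign because $D_iV$ is a positive multiple of $D_i\Phi$. Uniform ellipticity makes compact sets petite: the process has positive transition densities, in the spirit of Dupuis--Williams \cite{DupuisWilliams1994}. Down--Meyn--Tweedie continuous-time theory then yields \eqref{eq:ce-ergodicity} with $V$ locally bounded, which also gives $\pi_0$. I expect this step to be the main obstacle, particularly justifying Itô's formula for the unbounded $V$ via localization and handling the corner where $\sqrt{\Phi}$ is not smooth, which is why we use $\sqrt{1+\Phi}$.

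\textbf{Regulator bound.} This is where I would use $R_{ij}\ge0$ and $R_{ii}=1$. Take the $i$th coordinate of \eqref{eq:SRBM-decomposition}: $Z_i(1)=x_i+\mu_i+(\Sigma^{1/2}W_1)_i+\sum_jR_{ij}Y_j(1)\ge x_i+\mu_i+(\Sigma^{1/2}W_1)_i+Y_i(1)$. This alone bounds $Y_i(1)$ only by $Z_i(1)-x_i$ plus noise. So I would instead use the one-dimensional Skorokhod comparison. Because $Y_j$ is nondecreasing and $R_{ij}\ge0$, the process $Z_i$ is pushed upward by the other regulators. Hence $Y_i$ is dominated by the regulator of the one-dimensional reflection of $x_i+\mu_it+(\Sigma^{1/2}W_t)_i$, namely $Y_i(t)=\sup_{s\le t}\bigl(-x_i-\mu_is-(\Sigma^{1/2}W_s)_i-\sum_jR_{ij}Y_j(s)\bigr)^+\le\sup_{s\le 1}\bigl(-\mu_is-(\Sigma^{1/2}W_s)_i\bigr)^+$. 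Its expectation is finite and independent of $x$, which gives \eqref{eq:ce-regulator-bound}. With all hypotheses verified, \cref{thm:singular-block-obstruction} and \cref{cor:infinite-dimensional-failure} apply directly.
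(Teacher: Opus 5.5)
Your reduction to \cref{thm:singular-block-obstruction} and \cref{cor:infinite-dimensional-failure}, the appeal to Taylor--Williams, and the regulator bound via the one-dimensional Skorokhod formula match the paper. One small fix: in your formula the sum should run over $j\ne i$, since otherwise $Y_i$ appears on both sides.

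The genuine gap is the $C_0$-Feller step. The properties $L\Phi\le-2\delta|x|+C$ and $D_i\Phi\le0$ on $F_i$ give upper bounds such as $\mathbb E_x\Phi(Z_t)\le\Phi(x)+Ct$. They control how far the process can climb. To get $P_th(x)\to0$ as $|x|\to\infty$ you need the opposite: a process started far away cannot reach $\operatorname{supp}h$ by time $t$. That is a lower bound on $\Phi(Z^x_t)$. In $d\Phi(Z)=L\Phi\,dt+dM+\sum_iD_i\Phi\,dY_i$ the boundary terms are nonpositive and of order $|Z|\,dY_i$. So without $x$-uniform control of the regulator, the Lyapunov function gives nothing in this direction.

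The missing input is the estimate from your own last paragraph, used as the paper uses it. For $s\le t$ you have $Y_i^x(s)\le|\mu_i|t+\sup_{u\le t}|B_i(u)|$ uniformly in $x$. Hence $\sup_{s\le t}|Z^x(s)-x|_\infty\le C_RM_t$, with $M_t=|\mu|_\infty t+\sup_{u\le t}|B(u)|_\infty$ and $C_R=1+\max_i\sum_jR_{ij}$. For $h$ supported in $\{|y|_\infty\le a\}$ this gives $|P_th(x)|\le\|h\|_\infty\mathbb P(C_RM_t\ge|x|_\infty-a)\to0$. It also gives $\sup_x|P_th-h|\le\mathbb E\,\omega_h(C_RM_t)\to0$. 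In the proof of \cref{thm:regularity-SRBM}(ii), which you cite, uniformity in $x$ came from the Lipschitz Skorokhod map. You do not have that map for general completely-$\mathcal S$ data; here the displacement bound replaces it. Alternatively, once $P_tC_0(E)\subset C_0(E)$ is known, you may invoke the standard fact that pointwise convergence suffices. An even quicker route to decay at infinity: $R\ge0$ gives $Z^x(t)\ge x+\mu t+B(t)$ componentwise.

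For exponential ergodicity you take a genuinely different route. You rebuild the Lyapunov argument: $V=\exp(\gamma\sqrt{1+\Phi})$, nonpositive boundary terms, petite compact sets, then Down--Meyn--Tweedie. The paper instead cites Sarantsev's Corollary~3.2. Its hypotheses are exactly the certificate in the statement: $H$ symmetric positive definite, $HR$ a $Z$ matrix, $H\mu<0$, and $R$ completely-$\mathcal S$. It yields $\pi_0$ and $V$-uniform ergodicity with locally bounded $V$ in one step.

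Your computations of $D_i\Phi$ and $L\Phi$ are correct, and the plan is essentially the proof behind that criterion, so it buys self-containment. The cost is two further obligations. You must carry out the localization of It\^o's formula for the exponentially growing $V$. More substantively, you must prove petiteness of compact sets for a completely-$\mathcal S$ SRBM, where ``positive transition densities'' is an assertion that needs a precise reference or argument.
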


\begin{proof}
We verify the standing hypotheses of \cref{thm:singular-block-obstruction}: existence, the strong Markov property, the \(C_0\)-Feller property, the recurrence certificate \eqref{eq:ce-ergodicity}, and the regulator bound \eqref{eq:ce-regulator-bound}.

First, for the existence, the strong Markov property, the \(C_0\)-Feller property, as well as \eqref{eq:ce-regulator-bound}, we have
\begin{proposition}[{\normalfont\cite{TaylorWilliams1993}}]
    For a symmetric positive definite covariance $\Sigma$, a drift $\mu$, and a reflection matrix $R$ with unit diagonal, the orthant SRBM with data $(\Sigma,\mu,R)$ exists and is unique in law if and only if $R$ is completely-$\mathcal S$; when it exists it is a Feller continuous strong Markov process, and $x\mapsto P_t h(x)$ is continuous for every $h\in C_b(E)$. 
\end{proposition}

Since $\Sigma$ is positive definite, $R$ has unit diagonal, and
$R$ is completely-$S$, the orthant SRBM exists, is unique in law, is strong Markov,
and $x\mapsto P_t h(x)$ is continuous for every $h\in C_b(E)$ (Feller continuity).

Now we check the $C_0$ Feller property, i.e. $P_tC_0(E)\subset C_0(E)$ and that
$\|P_t h-h\|_\infty\to0$ for $h\in C_0(E)$. Write $Z^x(t)=x+\mu t+B(t)+RY^x(t),$ where $B(t)=\Sigma^{1/2}W(t)$ and each $Y_i^x$ is nondecreasing. Since $R_{ij}\ge0$ and $Y_j^x\ge0$, the one-dimensional Skorokhod formula gives, for
$0\le s\le t$, we have $Y_i^x(s)
        \le
        |\mu_i|t+\sup_{0\le u\le t}|B_i(u)|.$ (This also verifies \eqref{eq:ce-regulator-bound} since the right side has finite expectation independent of the initial state $x$.) 
        
Hence, with $M_t:=|\mu|_\infty t+\sup_{0\le u\le t}|B(u)|_\infty \text{ and }
        C_R:=1+\max_i\sum_j R_{ij},$ we have the uniform displacement bound
\[
        \sup_{0\le s\le t}|Z^x(s)-x|_\infty \le C_R M_t .
\]

Let $h\in C_c(E)$ and suppose $\operatorname{supp}h\subset\{|y|_\infty\le a\}$.
Then
\[
        |P_t h(x)|
        \le
        \|h\|_\infty
        \mathbb P\{C_RM_t\ge |x|_\infty-a\}
        \longrightarrow 0
        \qquad\text{as } |x|_\infty\to\infty .
\]
Thus $P_t h\in C_0(E)$ for $h\in C_c(E)$. By contraction and approximation of
$C_0(E)$ by compactly supported continuous functions, the same holds for every
$h\in C_0(E)$.

Finally, every $h\in C_0(E)$ is uniformly continuous. If $\omega_h$ is its modulus
of continuity, then
\[
        \sup_{x\in E}|P_t h(x)-h(x)|
        \le
        \mathbb E\,\omega_h(C_RM_t).
\]
Since $M_t\to0$ almost surely as $t\downarrow0$ and
$0\le \omega_h\le 2\|h\|_\infty$, dominated convergence gives $\|P_t h-h\|_\infty\to0 .$ Therefore $(P_t)$ is a strongly continuous positive contraction semigroup on
$C_0(E)$.

Then we verify \eqref{eq:ce-ergodicity}.

\begin{proposition}[{\normalfont\cite[Corollary~3.2]{Sarantsev2017}}]Let $(\Sigma,\mu,R)$ be orthant SRBM data with $\Sigma$ symmetric positive definite and $R$ completely-$\mathcal S$. If there is a symmetric positive definite matrix $H$ with $HR$ a $Z$ matrix and $H\mu<0$ componentwise, then the SRBM is positive recurrent with a unique stationary probability $\pi_0$ and is $V$-uniformly exponentially ergodic: there exist a locally bounded $V\colon E\to[1,\infty)$ and constants $M,\kappa>0$ such that
\[
 \norm{P_t(x,\cdot)-\pi_0}_{\TV}\le M V(x)e^{-\kappa t},
 \qquad x\in E,\ t\ge0.
\]
\end{proposition} 

The matrix $H$ in the statement of the corollary is exactly such a certificate: it is symmetric positive definite, $HR$ is a $Z$ matrix, and $H\mu<0$ componentwise, while $\Sigma$ is positive definite and $R$ is completely-$\mathcal S$. Hence Sarantsev's criterion supplies the stationary probability $\pi_0$ and the recurrence certificate \eqref{eq:ce-ergodicity}.

Thus \cref{thm:singular-block-obstruction} applies.
\end{proof}

Set
\begin{equation}\label{eq:ce-R-family}
R(a,b,c,d)=
\begin{pmatrix}
1&1&a\\
1&1&b\\
c&d&1
\end{pmatrix},
\qquad
\Sigma=I_3,
\qquad
\mu=\begin{pmatrix}-11\\-7\\-11\end{pmatrix},
\end{equation}
and let $\mathcal P$ be the parameter region
\begin{equation}\label{eq:ce-parameter-region}
\begin{gathered}
0<b\le\frac{11}{20},
\qquad
\frac{56b+25}{60}\le a\le\frac{3b+3}{5},\qquad
0<c\le\frac4{25},
\qquad
\frac23\le d\le\frac{35}{46}.
\end{gathered}
\end{equation}
The subset obtained by making all inequalities strict is nonempty, so $\mathcal P$ contains a genuine four-dimensional region.

\begin{theorem}[Four-parameter family]\label{thm:four-parameter-family}
For every $(a,b,c,d)\in\mathcal P$, the SRBM data in \eqref{eq:ce-R-family} define a nonsingular completely $S$, exponentially ergodic SRBM with a unique stationary probability. Its reflection matrix has the singular proper principal block
\[
 R_{\{1,2\},\{1,2\}}
 =\begin{pmatrix}1&1\\1&1\end{pmatrix}.
\]
For every such parameter choice, signed BAR interior uniqueness fails, and the space of zero-mass interior BAR coordinates is infinite-dimensional.
\end{theorem}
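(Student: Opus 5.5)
The plan is to derive \cref{thm:four-parameter-family} from \cref{cor:checkable-completely-S}, which in turn calls \cref{thm:singular-block-obstruction} and \cref{cor:infinite-dimensional-failure}. So we only need to check the hypotheses of that corollary for every $(a,b,c,d)\in\mathcal P$: unit diagonal, nonnegative entries, nonsingularity, the completely-$\mathcal S$ property, a singular proper principal block, and a Lyapunov certificate $H$. Unit diagonal and $R_{ij}\ge0$ can be read off \eqref{eq:ce-R-family}, since $a,b,c,d>0$ on $\mathcal P$. The block $R_{\{1,2\},\{1,2\}}$ is the all-ones $2\times2$ matrix, so it is singular, with kernel vector $v=(1,-1)$.

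\textbf{Nonsingularity and the completely-$\mathcal S$ property.} Expanding along the first row gives
\[
\det R=(1-bd)-(1-bc)+a(d-c)=(a-b)(d-c).
\]
On $\mathcal P$ we have $a\ge(56b+25)/60>b$, because $4b<25$, and $d\ge 2/3>4/25\ge c$. Hence $\det R>0$. For the completely-$\mathcal S$ property, observe that every principal submatrix of $R$ is entrywise nonnegative with unit diagonal. For such a matrix the vector $u=\mathbf 1$ gives $R_{BB}\mathbf 1\ge\mathbf 1>0$, so each principal submatrix is an $S$ matrix.

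\textbf{Lyapunov certificate.} This is the main step. We need a symmetric positive definite $H$ such that $HR$ has nonpositive off-diagonal entries and $H\mu<0$. First we look for a single rational $H$ that works on all of $\mathcal P$. The off-diagonal entries of $HR$ split by column: $(HR)_{k3}=H_{k1}a+H_{k2}b+H_{k3}$ involves only $(a,b)$, while $(HR)_{kj}$ for $j=1,2$ involves only $(c,d)$. Each sign condition is therefore affine in the parameters. The $(a,b)$-region of $\mathcal P$ is a triangle and the $(c,d)$-region is a rectangle, so it suffices to check the six off-diagonal conditions at the three vertices $(a,b)\in\{(5/12,0),(3/5,0),(353/600,11/20)\}$ and the four corner values of $(c,d)$. (Since $b>0$ and $c>0$ on $\mathcal P$, some of these vertices lie on the closure; affine inequalities still propagate to the interior of the region.)

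To find $H$, we try the ansatz $H=\begin{pmatrix}p&-q&-r\\-q&s&-t\\-r&-t&1\end{pmatrix}$ with $p,q,r,s,t\ge0$. Requiring the vertex inequalities to be tight at the edges of $\mathcal P$ gives a small linear system, and the peculiar endpoints $11/20$, $4/25$, $35/46$ and the slopes $56/60$, $3/5$ suggest that the region was cut out by exactly such tight constraints. Once $H$ is found, positive definiteness follows from leading principal minors, and $H\mu<0$ is a single numerical check against $\mu=(-11,-7,-11)$, which does not depend on the parameters. If no constant $H$ works, the fallback is to let $H$ depend affinely on $b$ and $d$ and recheck the affine inequalities piecewise.

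\textbf{Conclusion.} With $H$ in hand, Sarantsev's criterion as quoted in the proof of \cref{cor:checkable-completely-S} gives exponential ergodicity and a unique stationary probability. \Cref{cor:checkable-completely-S} then gives failure of signed BAR interior uniqueness, and \cref{cor:infinite-dimensional-failure} gives that the space of zero-mass interior BAR coordinates is infinite-dimensional. Finally, $\mathcal P$ has nonempty interior: for example $b=1/2$, $a=0.9$, $c=0.1$, $d=0.7$ satisfies every inequality strictly, which gives the four-dimensional claim.
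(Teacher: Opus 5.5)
Your route is the same as the paper's: verify the hypotheses of \cref{cor:checkable-completely-S}. Your determinant $(a-b)(d-c)>0$, the all-ones $\mathcal S$-witness, and the kernel vector $v=(1,-1)$ all match the paper.

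The gap is in the step you yourself call the main one: no matrix $H$ is ever exhibited. A search strategy plus a fallback is not a certificate. So exponential ergodicity, the unique stationary probability, and hence the applicability of \cref{cor:checkable-completely-S}, remain unproved. Worse, the ansatz you propose fails at every point of $\mathcal P$. Take $H_{21}=-q\le0$, $H_{22}=s$, $H_{23}=-t\le0$. The conditions $(HR)_{21}=-q+s-ct\le0$ and $(H\mu)_2=11q-7s+11t<0$ give
\[
 11q+11t<7s\le 7q+7ct,\qquad\text{hence}\qquad 0\le 4q<t(7c-11)\le 0,
\]
since $c\le\tfrac4{25}<\tfrac{11}7$. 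So any valid certificate with $H_{21}\le0$ must have $H_{23}>0$, which your sign pattern excludes.

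The paper supplies
\[
H=\begin{pmatrix}\tfrac12&-\tfrac3{10}&-\tfrac3{10}\\ -\tfrac3{10}&\tfrac7{25}&\tfrac18\\ -\tfrac3{10}&\tfrac18&\tfrac{23}{100}\end{pmatrix}.
\]
Its leading minors are $\tfrac12,\tfrac1{20},\tfrac{79}{80000}$, and $H\mu=(-\tfrac1{10},-\tfrac7{200},-\tfrac{21}{200})^T$. The off-diagonal entries of $HR$ in \eqref{eq:ce-HR-family} are nonpositive exactly by the inequalities defining $\mathcal P$. For example, $(HR)_{12}\le0\iff d\ge\tfrac23$, $(HR)_{21}\le0\iff c\le\tfrac4{25}$, $(HR)_{13}\le0\iff a\le\tfrac{3b+3}5$, $(HR)_{23}\le0\iff a\ge\tfrac{56b+25}{60}$, and $(HR)_{32}\le0\iff d\le\tfrac{35}{46}$. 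Your tight-constraint heuristic would in fact recover this $H$. Tightness of $(HR)_{23}$ along $60a=56b+25$ forces $(H_{21},H_{22},H_{23})\propto(-60,56,25)$, but this only works once you drop the sign constraint on $H_{23}$.

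There are also two minor slips. The third vertex of the $(a,b)$-triangle is $(\tfrac{93}{100},\tfrac{11}{20})$, where $56b+25=36b+36$; it is not $(\tfrac{353}{600},\tfrac{11}{20})$. Your interior point has $a=0.9=\tfrac{3b+3}5$ at $b=\tfrac12$, so it lies on the boundary of $\mathcal P$; take $a=0.89$ instead.
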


\begin{proof}
Every entry of $R(a,b,c,d)$ is positive. Hence every principal submatrix is an $S$ matrix, with the all-ones vector as a witness, and $R$ is completely $S$. A direct calculation gives $\det R(a,b,c,d)=(b-a)(c-d).$ The parameter bounds imply $a>b$ and $d>c$, so the determinant is positive. The block indexed by $A=\{1,2\}$ is singular, with
\[
 v=\begin{pmatrix}1\\-1\end{pmatrix}\in\ker R_{AA},
 \qquad
 w=R_{\{3\},A}v=c-d\ne0.
\]

Consider the symmetric matrix
\begin{equation}\label{eq:ce-H}
H=
\begin{pmatrix}
\frac12&-\frac3{10}&-\frac3{10}\\
-\frac3{10}&\frac7{25}&\frac18\\
-\frac3{10}&\frac18&\frac{23}{100}
\end{pmatrix}.
\end{equation}
Its leading principal minors are $\frac12,\frac1{20},\frac{79}{80000},$ so $H$ is positive definite. Multiplication gives
\begin{equation}\label{eq:ce-HR-family}
HR=
\begin{pmatrix}
\frac15-\frac{3c}{10}
 &\frac15-\frac{3d}{10}
 &\frac a2-\frac{3b}{10}-\frac3{10}\\[2mm]
\frac c8-\frac1{50}
 &\frac d8-\frac1{50}
 &-\frac{3a}{10}+\frac{7b}{25}+\frac18\\[2mm]
\frac{23c}{100}-\frac7{40}
 &\frac{23d}{100}-\frac7{40}
 &-\frac{3a}{10}+\frac b8+\frac{23}{100}
\end{pmatrix}.
\end{equation}
The six off-diagonal entries are nonpositive by \eqref{eq:ce-parameter-region}. Also
\begin{equation}\label{eq:ce-Hmu}
H\mu=
\begin{pmatrix}
-\frac1{10}\\[1mm]
-\frac7{200}\\[1mm]
-\frac{21}{200}
\end{pmatrix}<0.
\end{equation}
\Cref{cor:checkable-completely-S} completes the proof.
\end{proof}

For this entire family the algebraic source is explicit. Let $S=\{(0,0,y):y>0\}$ and choose a nonconstant $\phi\in C_c^\infty((0,\infty))$. Define
\begin{align}
 \int_{F_1}g\,d\zeta_1
 &=\int_0^\infty g(0,0,y)\phi(y)\,dy,
                                                    \label{eq:ce-family-zeta1}\\
 \int_{F_2}g\,d\zeta_2
 &=-\int_0^\infty g(0,0,y)\phi(y)\,dy,
 \qquad \zeta_3=0.                         \label{eq:ce-family-zeta2}
\end{align}
Since $R_1-R_2=(0,0,c-d)^T$,
\begin{equation}\label{eq:ce-family-source}
 \sum_{i=1}^3\int_{F_i}D_i f\,d\zeta_i
 =(d-c)\int_0^\infty\phi'(y)f(0,0,y)\,dy.
\end{equation}
Thus
\begin{equation}\label{eq:ce-family-chi}
 \chi(dx)=(d-c)\phi'(x_3)\,dx_3\,
          \delta_0(dx_1)\delta_0(dx_2)
\end{equation}
is nonzero and has total mass zero. Its zero potential and the corresponding boundary occupation potentials give the signed BAR counterexample.

\begin{corollary}[Concrete rational counterexample]\label{cor:rational-counterexample}
For
\begin{equation}\label{eq:ce-explicit-data}
R=
\begin{pmatrix}
1&1&\frac35\\
1&1&\frac16\\
\frac2{15}&\frac23&1
\end{pmatrix},
\qquad
\Sigma=I_3,
\qquad
\mu=\begin{pmatrix}-11\\-7\\-11\end{pmatrix},
\end{equation}
there is a finite signed BAR tuple $(\overline\pi,\overline\nu_1,\overline\nu_2,\overline\nu_3)$ such that $\overline\pi(E)=0$ and $\overline\pi\ne0$.
\end{corollary}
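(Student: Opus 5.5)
The plan is to show that the explicit data \eqref{eq:ce-explicit-data} is a member of the four-parameter family \eqref{eq:ce-R-family}. Once that is done, the result follows directly from \cref{thm:four-parameter-family}, whose hypotheses in turn reduce to \cref{cor:checkable-completely-S} and \cref{thm:singular-block-obstruction}. Concretely, we read off $a=\tfrac35$, $b=\tfrac16$, $c=\tfrac2{15}$, $d=\tfrac23$. The drift $\mu=(-11,-7,-11)^T$ and the covariance $\Sigma=I_3$ are exactly those of \eqref{eq:ce-R-family}, so the only remaining task is to check that $(a,b,c,d)\in\mathcal P$.

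We verify the four constraints of \eqref{eq:ce-parameter-region} in rational arithmetic.
\begin{enumerate}[label=\textup{(\alph*)}]
\item For $b$: $0<\tfrac16\le\tfrac{11}{20}$ holds.
\item For $a$: the lower bound is $(56\cdot\tfrac16+25)/60=(\tfrac{28}{3}+25)/60=\tfrac{103}{180}\approx0.572$, and the upper bound is $(3\cdot\tfrac16+3)/5=\tfrac{7}{10}$. Hence $\tfrac{103}{180}\le\tfrac35=\tfrac{108}{180}\le\tfrac{126}{180}$.
\item For $c$: $0<\tfrac2{15}\le\tfrac4{25}$, since $\tfrac{10}{75}\le\tfrac{12}{75}$.
\item For $d$: $\tfrac23\le\tfrac23\le\tfrac{35}{46}$, with equality at the lower endpoint. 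This is allowed because $\mathcal P$ is defined by closed inequalities.
\end{enumerate}

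To guard against an arithmetic slip, we also check the consequences that \cref{thm:four-parameter-family} relies on. First, $\det R=(b-a)(c-d)=(-\tfrac{13}{30})(-\tfrac{8}{15})>0$, so $R$ is nonsingular. Second, $w=c-d=-\tfrac8{15}\ne0$. Third, the off-diagonal entries of $HR$ in \eqref{eq:ce-HR-family} are nonpositive:
\begin{itemize}
\item $\tfrac15-\tfrac{3c}{10}=\tfrac15-\tfrac1{25}>0$ is a diagonal entry, so its sign is not constrained;
\item $\tfrac15-\tfrac{3d}{10}=0$;
\item $\tfrac a2-\tfrac{3b}{10}-\tfrac3{10}=\tfrac3{10}-\tfrac1{20}-\tfrac3{10}<0$;
\item $\tfrac c8-\tfrac1{50}=\tfrac1{60}-\tfrac1{50}<0$;
\item $-\tfrac{3a}{10}+\tfrac{7b}{25}+\tfrac18=-\tfrac{9}{50}+\tfrac{7}{150}+\tfrac18<0$;
\item $\tfrac{23c}{100}-\tfrac7{40}<0$ and $\tfrac{23d}{100}-\tfrac7{40}=\tfrac{23}{150}-\tfrac7{40}<0$.
\end{itemize}

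With membership in $\mathcal P$ established, \cref{thm:four-parameter-family} shows that the SRBM is nonsingular, completely-$\mathcal S$ and exponentially ergodic, with singular block $R_{\{1,2\},\{1,2\}}$. \Cref{thm:singular-block-obstruction} then produces the tuple with $\overline\pi(E)=0$ and $\overline\pi\ne0$. For concreteness, the tuple is built from the explicit source \eqref{eq:ce-family-chi} with $d-c=\tfrac8{15}$ and any nonconstant $\phi\in C_c^\infty((0,\infty))$, passed through the zero-potential construction of \cref{prop:zero-potential-correction}.

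The only step that can go wrong is the arithmetic at the boundary point $d=\tfrac23$, where one entry of $HR$ vanishes exactly. The $Z$-matrix condition allows zero entries, so this is harmless, but it is the spot to double-check.
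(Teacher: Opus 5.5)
Your proposal is correct and takes essentially the same route as the paper. Like the paper, you verify $(a,b,c,d)=(\tfrac35,\tfrac16,\tfrac2{15},\tfrac23)\in\mathcal P$, cross-check $\det R=\tfrac{52}{225}$ and the off-diagonal signs of $HR$ (your values match the paper's explicit matrix, including the exact zero at the boundary case $d=\tfrac23$), and then invoke \cref{thm:four-parameter-family}, which reduces to \cref{cor:checkable-completely-S} and the zero-potential construction with source $\chi=\tfrac8{15}\phi'(x_3)\,dx_3\,\delta_0(dx_1)\delta_0(dx_2)$; the paper's additional check $R^{-1}\mu<0$ is not needed for this argument.
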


\begin{proof}
The parameter choice belongs to $\mathcal P$. Exact arithmetic gives
\begin{equation}\label{eq:ce-explicit-certificates}
 \det R=\frac{52}{225}>0,
 \qquad
 R^{-1}\mu=
 \begin{pmatrix}
 -\frac{365}{104}\\[1mm]
 -\frac{203}{104}\\[1mm]
 -\frac{120}{13}
 \end{pmatrix}<0.
\end{equation}
The matrix $H$ in \eqref{eq:ce-H} satisfies
\begin{equation}\label{eq:ce-explicit-HR}
HR=
\begin{pmatrix}
\frac4{25}&0&-\frac1{20}\\[1mm]
-\frac1{300}&\frac{19}{300}&-\frac1{120}\\[1mm]
-\frac{433}{3000}&-\frac{13}{600}&\frac{17}{240}
\end{pmatrix},
\end{equation}
and \eqref{eq:ce-Hmu} holds. Thus all assumptions in \cref{cor:checkable-completely-S} are verified.

For complete explicitness, take the standard bump
\begin{equation}\label{eq:ce-bump}
\phi(y)=
\begin{cases}
\displaystyle
\exp\!\left(-\frac{1}{(y-1)(2-y)}\right),&1<y<2,\\[2mm]
0,&\text{otherwise}.
\end{cases}
\end{equation}
Define $\zeta_1,\zeta_2,\zeta_3$ by \eqref{eq:ce-family-zeta1}--\eqref{eq:ce-family-zeta2}. Here
\[
 R_1-R_2=
 \begin{pmatrix}0\\0\\-\frac8{15}\end{pmatrix},
\]
so
\begin{equation}\label{eq:ce-explicit-source}
 \chi(dx)=\frac8{15}\phi'(x_3)\,dx_3\,
          \delta_0(dx_1)\delta_0(dx_2),
 \qquad
 \chi(E)=0,
 \qquad
 \chi\ne0.
\end{equation}
Let $P_t$ be the reflected semigroup and let $K_i$ be the kernels in \eqref{eq:ce-Ki}. Set
\begin{equation}\label{eq:ce-explicit-potentials}
 \overline\pi=\int_0^\infty\chi P_t\,dt,
 \qquad
 \overline\nu_i=\zeta_i+\sum_{n=0}^\infty(\chi P_n)K_i.
\end{equation}
By \cref{prop:zero-potential-correction}, all measures in \eqref{eq:ce-explicit-potentials} are finite and satisfy
\[
 \int_E Lf\,d\overline\pi
 +\sum_{i=1}^3\int_{F_i}D_i f\,d\overline\nu_i=0,
 \qquad f\in C_b^2(E).
\]
Moreover $\overline\pi(E)=0$ and $\overline\pi\ne0$. Therefore $\overline\pi$ cannot be a scalar multiple of the stationary probability $\pi_0$.
\end{proof}

\section{Unified understanding: RI defects and boundary algebra}\label{sec:unified-understanding}

This section isolates resolvent insertion as the common mechanism behind both the signed BAR uniqueness theorem and the completely-\(\mathcal S\) obstruction.  We view a signed BAR tuple through its interior coordinate and measure the failure of this coordinate to satisfy resolvent insertion by its RI defect.  After quotienting out the stationary BAR direction and the pure boundary kernel, the remaining part of the BAR kernel is exactly the RI defect quotient.  In the Harrison--Reiman nonsingular \(M\)-matrix class this quotient vanishes, while in the completely-\(\mathcal S\) singular-block regime the boundary source and its zero-potential lift produce a nonzero class in this quotient. We denote

%This section records the common structure behind the uniqueness theorem and the completely-$\mathcal S$ obstruction.   The point is to separate two issues that were intertwined in the preceding sections.  The analytic question is whether the interior coordinate of a BAR tuple satisfies the resolvent insertion identity.  The algebraic question is whether the remaining pure boundary part is zero, or whether a boundary gauge can create a centered source.  We keep possible failures of the resolvent insertion as an explicit quotient.

\[
 \mathsf M_E=\cM(E),
 \qquad
 \mathsf M_\partial=\prod_{i=1}^d \cM(F_i),
 \qquad
 \mathcal T=C_b^2(E).
\]
For $(\pi,\nu)\in \mathsf M_E\times\mathsf M_\partial$, define the BAR functional $\mathcal A(\pi,\nu)(f)
 =\int_E Lf\,d\pi+
   \sum_{i=1}^d\int_{F_i}D_i f\,d\nu_i,$ for $f\in\mathcal T$. Thus finite signed BAR tuples are precisely $\ker\mathcal A$.  Define the pure boundary operator $\partial_R\zeta(f)
 =\sum_{i=1}^d\int_{F_i}D_i f\,d\zeta_i$. Since one can identify a finite signed measure $\chi\in\cM(E)$ with the functional $f\mapsto\int_E f\,d\chi$ on $\mathcal T$, we define the relation $\partial_R\zeta=\chi$ by the condition that $ \sum_{i=1}^d\int_{F_i}D_i f\,d\zeta_i
 =\int_E f\,d\chi$ holds for all $f\in\mathcal T$. 

\subsection{The RI-defect quotient}

Let $Z_{\rm BAR}:=\ker\mathcal A$ and $\Pi_E(\pi,\nu)=\pi$, and define the space of interior coordinates of signed BAR tuples by
\begin{equation}\label{eq:unified-I-BAR}
 \mathcal I_{\rm BAR}:=\Pi_E Z_{\rm BAR}
 =\{\pi\in\cM(E):\text{ there exists }\nu\in\mathsf M_\partial
       \text{ with }(\pi,\nu)\in\ker\mathcal A\}.
\end{equation}
For a finite signed measure $m\in\cM(E)$, define its resolvent-insertion defect by
\begin{equation}\label{eq:unified-r-defect}
 \mathfrak r(m)(\lambda,h)
 :=\int_E(\lambda R_\lambda h-h)\,dm,
 \qquad \lambda>0,\quad h\in C_0(E).
\end{equation}
Thus $m$ satisfies the resolvent identity precisely when $\mathfrak r(m)=0$.  Set $\mathcal I_{\rm RI}:=\mathcal I_{\rm BAR}\cap\ker\mathfrak r,
$ and define the RI-defect quotient $ \mathcal Q_{\rm RI}:=\mathcal I_{\rm BAR}/\mathcal I_{\rm RI}.
$ Equivalently, $ \mathcal Q_{\rm RI}\simeq \mathfrak r(\mathcal I_{\rm BAR}).
$ This quotient records exactly the part of the signed BAR kernel not killed by resolvent insertion.

\begin{lemma}[BAR quotient by RI defects]\label{lem:RI-defect-quotient}
Assume that the reflected semigroup is $C_0$-Feller and strongly continuous, that it has a unique invariant probability $\pi_0$, and that $s_0=(\pi_0,\nu^0)$ is the stationary BAR tuple.  Then there is a canonical exact sequence
\begin{equation}\label{eq:unified-exact-sequence}
 0
 \longrightarrow
 \ker\partial_R
 \longrightarrow
 \ker\mathcal A/\mathbb R s_0
 \longrightarrow
 \mathcal Q_{\rm RI}
 \longrightarrow
 0.
\end{equation}
Equivalently,
\begin{equation}\label{eq:unified-quotient-isomorphism}
 \frac{\ker\mathcal A}
 {\mathbb R s_0\oplus(\{0\}\times\ker\partial_R)}
 \simeq
 \mathcal Q_{\rm RI}.
\end{equation}
\end{lemma}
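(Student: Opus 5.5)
We define the maps explicitly and then verify exactness at each of the three spots. The first map is $\iota:\ker\partial_R\to\ker\mathcal A/\mathbb R s_0$, given by $\zeta\mapsto[(0,\zeta)]$. It is well defined because $\mathcal A(0,\zeta)=\partial_R\zeta=0$. The second map is $q:\ker\mathcal A/\mathbb R s_0\to\mathcal Q_{\rm RI}$, given by $[(\pi,\nu)]\mapsto[\pi]\in\mathcal I_{\rm BAR}/\mathcal I_{\rm RI}$. This is well defined because $\Pi_E s_0=\pi_0$ lies in $\mathcal I_{\rm BAR}$, and $\pi_0$ lies in $\ker\mathfrak r$: invariance $\pi_0P_t=\pi_0$ and Fubini give $\int\lambda R_\lambda h\,d\pi_0=\int h\,d\pi_0$. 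Both maps are linear, and $q$ is surjective by the definition $\mathcal I_{\rm BAR}=\Pi_E\ker\mathcal A$.

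For injectivity of $\iota$: suppose $(0,\zeta)=c s_0$. Comparing interior coordinates gives $c\pi_0=0$, so $c=0$ since $\pi_0(E)=1$, and hence $\zeta=0$.

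The composition $q\circ\iota$ is zero because its interior coordinate is $0\in\mathcal I_{\rm RI}$.

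The substantive step is $\ker q\subset\operatorname{im}\iota$. Suppose $[(\pi,\nu)]\in\ker q$, so that $\pi\in\mathcal I_{\rm RI}$ and $\mathfrak r(\pi)=0$. We then repeat the soft half of the positive proof. \Cref{prop:semigroup-invariance} requires only the $C_0$-Feller and strongly continuous hypotheses, and it gives $\pi P_t=\pi$. \Cref{prop:interior-identification} requires only uniqueness of the invariant probability, and it gives $\pi=c\pi_0$ with $c=\pi(E)$. Set $\zeta=\nu-c\nu^0$. Linearity of $\mathcal A$ gives $\mathcal A(0,\zeta)=\mathcal A(\pi,\nu)-c\,\mathcal A(s_0)=0$, so $\zeta\in\ker\partial_R$. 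Moreover $(\pi,\nu)=cs_0+(0,\zeta)$, so $[(\pi,\nu)]=\iota(\zeta)$.

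The obstacle is that this last step must not use the $M$-matrix hypothesis. We therefore have to check that the two propositions invoked depend only on the semigroup assumptions listed in the lemma. In particular, \cref{prop:pure-boundary} must not be invoked. The whole point of the quotient is that $\ker\partial_R$ may be nonzero, for instance through the gauges of \cref{prop:singular-block-gauge}, but only when they happen to close up without a source.

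Finally, \eqref{eq:unified-quotient-isomorphism} follows from the first isomorphism theorem applied to the surjection $\ker\mathcal A\to\mathcal Q_{\rm RI}$ given by $(\pi,\nu)\mapsto[\pi]$. By the argument above, its kernel is $\mathbb R s_0+(\{0\}\times\ker\partial_R)$. This sum is direct because $\iota$ is injective: an element $cs_0=(0,\zeta)$ forces $c=0$. This also shows that the sequence is canonical, since no choices were made in defining $\iota$ or $q$.
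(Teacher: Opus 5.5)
Your proposal is correct and follows the paper's proof essentially step by step. It uses the same maps $\zeta\mapsto[(0,\zeta)]$ and $[(\pi,\nu)]\mapsto[\pi]$, well-definedness via $\mathfrak r(\pi_0)=0$ from invariance of $\pi_0$, the kernel computation through \cref{prop:semigroup-invariance} and \cref{prop:interior-identification} followed by subtracting $c\,s_0$, injectivity and directness by comparing interior coordinates, and the first isomorphism theorem.
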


\begin{proof}
Let $ H_{\rm BAR}:=\ker\mathcal A/\mathbb R s_0.$ Define $\Gamma:H_{\rm BAR}\to\mathcal Q_{\rm RI},$ and $\Gamma([(\pi,\nu)])=[\pi].$  Replacing $(\pi,\nu)$ by $(\pi,\nu)+c(\pi_0,\nu^0)$ changes the interior coordinate by $c\pi_0$.  Since $\pi_0$ is invariant,
\[
 \int_E R_\lambda h\,d\pi_0
 =\int_0^\infty e^{-\lambda t}\int_E P_t h\,d\pi_0\,dt
 =\lambda^{-1}\int_E h\,d\pi_0,
\]
so $\mathfrak r(\pi_0)=0$ and $\pi_0\in\mathcal I_{\rm RI}$.  Hence $[\pi]$ is unchanged in $\mathcal Q_{\rm RI}$.  The map $\Gamma$ is surjective by the definition of $\mathcal Q_{\rm RI}$.

We compute its kernel.  Suppose $\Gamma([(\pi,\nu)])=0$.  Then $\pi\in\mathcal I_{\rm RI}$, so
\[
 \int_E(\lambda R_\lambda h-h)\,d\pi=0,
 \qquad h\in C_0(E),\quad \lambda>0.
\]
By \cref{prop:semigroup-invariance}, $\pi P_t=\pi$ for all $t\ge0$.  By \cref{prop:interior-identification}, we have $\pi=c\pi_0,$ where $c=\pi(E).$ Since both $(\pi,\nu)$ and $c(\pi_0,\nu^0)$ are BAR tuples, $(0,\nu-c\nu^0)
 =(\pi,\nu)-c(\pi_0,\nu^0)
 \in\ker\mathcal A.$ Equivalently, $\partial_R(\nu-c\nu^0)=0.$ Thus $[(\pi,\nu)]=[(0,\eta)]$ for some $\eta\in\ker\partial_R$.

Conversely, if $\eta\in\ker\partial_R$, then $(0,\eta)\in\ker\mathcal A$ and its interior coordinate has zero RI defect.  Hence $\ker\Gamma=\{[(0,\eta)]:\eta\in\ker\partial_R\}.$

The map $\eta\mapsto[(0,\eta)]$ is injective because if $[(0,\eta)]=0$ in $\ker\mathcal A/\mathbb R s_0$, then $(0,\eta)=c(\pi_0,\nu^0)$ for some $c\in\mathbb R$; the interior coordinate gives $c\pi_0=0$, hence $c=0$ and $\eta=0$.  This proves the exact sequence \eqref{eq:unified-exact-sequence}.  The quotient isomorphism \eqref{eq:unified-quotient-isomorphism} is the corresponding first-isomorphism statement.  The sum in the denominator is direct by the same interior-coordinate argument.
\end{proof}

\begin{remark}[Strength of the $C_0$ Feller Assumption]\label{rem:C0-Feller-strength} The only assumption we make in this abstraction is the $C_0$ Feller property used in the RI defect quotient which is only a soft
semigroup input instead of a boundary regularity assumption.  It means that
\[
        P_t C_0(E)\subset C_0(E),
        \qquad
        \|P_t h-h\|_\infty\to0
        \quad\text{as }t\downarrow0,\ h\in C_0(E).
\]
It is much weaker than strong Feller smoothing, the existence of transition
densities, differentiability of $P_t h$, or closed domain $C^2$ regularity of
the probabilistic resolvent.  In particular, it does not assert that
$R_\lambda h$ has a classical oblique Neumann trace on the boundary.

In the present argument this input is used only through standard semigroup
consequences.  In \cref{prop:semigroup-invariance}, it makes $ t\longmapsto \int_E P_t h\,d\bar\pi$ bounded and continuous for \(h\in C_0(E)\).  The Laplace transform identity
then upgrades an almost everywhere conclusion to equality for every
\(t\ge0\), using uniqueness of the Laplace transform
\cite[Chapter~II]{Doetsch1974}.  In \cref{prop:zero-potential-correction},
strong continuity is used through the density of the generator domain in
\(C_0(E)\), a standard fact for strongly continuous Feller semigroups
\cite[Chapter~1, Section~2]{EthierKurtz1986}.  Thus the exact sequence in
the unified section could be formulated with these consequences directly:
RI null interior coordinates must be signed invariant measures, and the
zero potential must be injective on the centered source class.

For the positive Harrison--Reiman nonsingular \(M\) matrix class, the
\(C_0\) Feller property is not an extra boundary smoothness input.
It is proved in \cref{thm:regularity-SRBM}.  The proof uses the synchronous
Lipschitz estimate for the Skorokhod map, imported from
\cite[Proposition~2.6]{LipshutzRamanan2019}, together with Brownian path
continuity.  The same mechanism applies more generally to any orthant SRBM
for which one has a pathwise unique continuous construction and a finite
horizon Lipschitz estimate of the form $\sup_{0\le s\le T}|Z_s^x-Z_s^y|
        \le C_T |x-y|$ under a synchronous coupling.  Then \(x\mapsto P_t h(x)\) is continuous,
\(P_t h\) vanishes at infinity, and \(\|P_t h-h\|_\infty\to0\) follow by
the same argument as in the proof of \cref{thm:regularity-SRBM}.

For the completely \(\mathcal S\) counterexample regime, the matrix
condition \(R\) completely \(\mathcal S\) should not be read as a substitute
for the analytic inputs in the RI reduction.  It is an existence geometry
condition.  In the checkable subclass of \cref{cor:checkable-completely-S},
Taylor--Williams provide the orthant SRBM existence and Feller framework for
completely \(\mathcal S\) reflection data \cite{TaylorWilliams1993}, while
Sarantsev's Lyapunov criterion supplies the stationary probability and the
\(V\) uniform total variation exponential ergodicity used in
\eqref{eq:ce-ergodicity} \cite[Corollary~3.2]{Sarantsev2017}.  The
one step regulator bound is then checked directly in
\cref{cor:checkable-completely-S}.  These assumptions are sufficient for the
zero potential construction, but they do not imply the global RI reduction
for all signed BAR tuples.  Indeed, in the singular block case the boundary
gauge and zero potential construction produce a nonzero RI defect class.
Thus the completely \(\mathcal S\) hypothesis alone is not a uniqueness
regularity assumption.

The minimal zero potential input is also source specific.  For a given
centered source \(\chi\), it is enough to have finite signed measures
\(U\chi\) and \(\Theta_i\chi\) satisfying $\mathcal A(U\chi,\Theta\chi)=-\chi,$ and  $(U\chi)(E)=0$ together with injectivity \(U\chi=0\Rightarrow \chi=0\) on the source class
under consideration.  The quantitative recurrence and regulator assumptions
in \cref{prop:zero-potential-correction} are a convenient sufficient package:
they imply total variation convergence of
\(
        \int_0^\infty \chi P_t\,dt
        \text{ and }
        \sum_{n\ge0}(\chi P_n)K_i,
\)
justify the Poisson identity, and give the injectivity needed to show that a
nonzero centered source yields a nonzero zero mass interior BAR coordinate.
\end{remark}

\paragraph{The positive theorem in the Harrison--Reiman Class} In the Harrison--Reiman nonsingular $M$-matrix setting, \cref{thm:resolvent-insertion} says that every signed BAR tuple satisfies the resolvent identity.  Equivalently, $\mathfrak r(\mathcal I_{\rm BAR})=0$ and $\mathcal Q_{\rm RI}=0.$ Then \cref{lem:RI-defect-quotient} reduces the quotient $\ker\mathcal A/\mathbb R s_0$ to the pure boundary kernel.  The latter is killed by \cref{prop:pure-boundary}, \emph{i.e.} $\ker\partial_R=\{0\}.$ Consequently $\ker\mathcal A=\mathbb R s_0,$ which is the signed uniqueness conclusion of \cref{thm:main}.

This formulation separates the two uses of active-block invertibility.  First, $R_{AA}^{-1}$ defines the active projection
\[
 \mathsf L_Av=v-R_A R_{AA}^{-1}v_A,
 \qquad
 \mathsf L_A R_i=0,
 \quad i\in A.
\]
This is the algebraic input behind the measure--Neumann resolvent insertion.  Second, $R_{AA}^{-T}$ prescribes active oblique jets on $S_A$.  If the active normal gradient is $a$, then $(D_i f|_{S_A})_{i\in A}=R_{AA}^T a,$ so invertibility of $R_{AA}$ permits the choice $a=R_{AA}^{-T}\psi$.  The induction over strata in \cref{prop:pure-boundary} uses exactly this prescription.

\paragraph{The completely-$\mathcal S$ obstruction.} In this section, we advance our understanding of the zero potential construction as a device for cancelling the source term in the BAR but as a way of placing explicit nonzero elements into $\mathcal Q_{\rm RI}$. Starting from the singular boundary gauge, one has a boundary source identity $\partial_R \zeta = \chi .$ Thus the boundary gauge alone has BAR defect \(+\chi\).  The zero potential $Q_\chi=(U_\chi,\Theta_\chi), \text{ where }
U_\chi=\int_0^\infty \chi P_t\,dt,$ is constructed so that its BAR contribution is exactly the opposite defect: $A(Q_\chi)=-\chi .$ Therefore we have $Q_\chi+(0,\zeta)=(U_\chi,\Theta_\chi+\zeta)\in \ker A,$ so the zero potential turns the boundary gauge into a genuine signed BAR tuple.  However, this cancellation does not make the source \(\chi\) disappear.  Instead, we shows that \(\chi\) reappears as the resolvent insertion defect of the interior measure \(U_\chi\): $r(U_\chi)(\lambda,h)=-\int_E R_\lambda h\,d\chi .$ Hence, if \(\chi\neq 0\), then \(U_\chi\) cannot satisfy the resolvent insertion identity; otherwise the strong continuity of the semigroup would imply that \(\chi\) vanishes on all functions in \(C_0(E)\), forcing \(\chi=0\).  Consequently, the zero potential is the mechanism that converts the singular boundary source into a concrete nonzero class. $0\neq [U_\chi]\in Q_{\mathrm{RI}} .$ This is why the completely \(S\) counterexample is best understood as an RI defect: the boundary algebra creates the centered source \(\chi\), and the zero potential lifts that source into a genuine BAR tuple whose interior coordinate carries a nonzero resolvent insertion defect.

\begin{proposition}[Boundary sources give RI-defect classes]\label{prop:boundary-source-RI-defect}
Assume the semigroup is strongly continuous on $C_0(E)$ and that the zero-potential construction of \cref{prop:zero-potential-correction} is available for a centered source $\chi$.  Write
\[
 Q\chi=(U\chi,\Theta\chi),
 \qquad
 U\chi=\int_0^\infty\chi P_t\,dt,
 \qquad
 \Theta_i\chi=\sum_{n=0}^\infty(\chi P_n)K_i.
\]
If $\zeta\in\mathsf M_\partial$ satisfies $\partial_R\zeta=\chi$, then
\begin{equation}\label{eq:unified-lifted-BAR-tuple}
 Q\chi+(0,\zeta)
 =(U\chi,\Theta\chi+\zeta)
 \in\ker\mathcal A.
\end{equation}
Its image under the map in \cref{lem:RI-defect-quotient} is the class $[U\chi]\in\mathcal Q_{\rm RI}$, and
\begin{equation}\label{eq:unified-defect-of-potential}
 \mathfrak r(U\chi)(\lambda,h)
 =-\int_E R_\lambda h\,d\chi,
 \qquad \lambda>0,\quad h\in C_0(E).
\end{equation}
In particular, if $\chi\ne0$, then $[U\chi]\ne0$ in $\mathcal Q_{\rm RI}$.
\end{proposition}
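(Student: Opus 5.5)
The plan is to treat the three assertions of \cref{prop:boundary-source-RI-defect} in order: membership in $\ker\mathcal A$, the defect formula \eqref{eq:unified-defect-of-potential}, and nontriviality of the class.

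\emph{Membership in $\ker\mathcal A$.} This is essentially the content of \cref{prop:zero-potential-correction}. That proof establishes the Poisson identity
\[
\int_E Lf\,d(U\chi)+\sum_{i=1}^d\int_{F_i}D_if\,d(\Theta_i\chi)=-\int_E f\,d\chi,\qquad f\in\mathcal T,
\]
that is, $\mathcal A(Q\chi)=-\chi$. The hypothesis $\partial_R\zeta=\chi$ states $\mathcal A(0,\zeta)=\chi$. Adding the two identities and using linearity of $\mathcal A$ gives \eqref{eq:unified-lifted-BAR-tuple}. The map $\Gamma$ of \cref{lem:RI-defect-quotient} sends a tuple to the class of its interior coordinate, so the image is $[U\chi]$ by definition.

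\emph{The defect formula.} Fix $h\in C_0(E)$ and $\lambda>0$. I would compute $\int_E R_\lambda h\,d(U\chi)$ by Fubini:
\[
\int_E R_\lambda h\,d(U\chi)=\int_0^\infty\!\!\int_0^\infty e^{-\lambda s}\,\chi(P_{t+s}h)\,ds\,dt.
\]
This is justified because $\|\chi P_t\|_{\TV}\le Me^{-\kappa t}\int V\,d|\chi|$ by \eqref{eq:ce-chiPt-bound} and $\|P_sh\|_\infty\le\|h\|_\infty$. Next set $\phi(r)=\chi(P_rh)$, which is integrable on $[0,\infty)$. Substituting $r=t+s$ and integrating out $t$ turns the double integral into
\[
\int_0^\infty \phi(r)\,\frac{1-e^{-\lambda r}}{\lambda}\,dr .
\]
Multiplying by $\lambda$ gives
\[
\lambda\int_E R_\lambda h\,d(U\chi)=\int_0^\infty\phi(r)\,dr-\int_0^\infty e^{-\lambda r}\phi(r)\,dr .
\]
The first term equals $\int_E h\,d(U\chi)$. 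The second term equals $\int_E R_\lambda h\,d\chi$, again by Fubini. Subtracting $\int_E h\,d(U\chi)$ then yields \eqref{eq:unified-defect-of-potential}.

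\emph{Nontriviality.} Suppose $[U\chi]=0$ in $\mathcal Q_{\rm RI}$. Since $U\chi\in\mathcal I_{\rm BAR}$, this means $\mathfrak r(U\chi)=0$, and by \eqref{eq:unified-defect-of-potential} we get $\int R_\lambda h\,d\chi=0$ for all $h\in C_0(E)$ and $\lambda>0$. By strong continuity, $\lambda R_\lambda h\to h$ uniformly as $\lambda\to\infty$. Because $\chi$ is finite, passing to the limit gives $\chi(h)=0$ for all $h\in C_0(E)$. Alternatively, the range of $R_\lambda$ is $D(\mathcal A)$, which is dense in $C_0(E)$ by the Ethier--Kurtz density theorem already quoted. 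Either way $\chi=0$ by the Riesz identification $\cM(E)=C_0(E)^*$, which contradicts $\chi\ne0$.

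I expect the main obstacle to be the bookkeeping in the Fubini/time-shift computation, specifically checking absolute integrability of $(t,s)\mapsto e^{-\lambda s}|\chi(P_{t+s}h)|$. This follows directly from the exponential total-variation bound, so the step should be routine once written carefully.
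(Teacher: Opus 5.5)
Your proposal is correct and follows the paper's proof essentially step for step. You obtain membership in $\ker\mathcal A$ by adding $\mathcal A(Q\chi)=-\chi$ and $\mathcal A(0,\zeta)=\chi$. The Fubini/time-shift computation with $F(u)=\chi(P_u h)$ gives $-\int_0^\infty e^{-\lambda u}F(u)\,du=-\chi(R_\lambda h)$, and nontriviality follows from $\lambda R_\lambda h\to h$ in $C_0(E)$. Your explicit absolute-integrability check via the exponential total-variation bound is a detail the paper only alludes to.
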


\begin{proof}
By \cref{prop:zero-potential-correction} and fact that $\partial_R\zeta=\chi$, thus $\mathcal A(Q\chi)(f)=-\int_E f\,d\chi$ and $\mathcal A(0,\zeta)(f)=\int_E f\,d\chi$ holds for all $f\in\mathcal T$. Adding the two identities gives \eqref{eq:unified-lifted-BAR-tuple}.  Hence $U\chi\in\mathcal I_{\rm BAR}$ and its class in the RI-defect quotient is $[U\chi]$.

It remains to identify its defect.  For $h\in C_0(E)$ put $F(t)=\int_E P_t h\,d\chi.$ The total-variation convergence in \cref{prop:zero-potential-correction} justifies the following Fubini calculation:
\begin{align*}
 \mathfrak r(U\chi)(\lambda,h)
 &=\int_0^\infty
    \int_E P_t(\lambda R_\lambda h-h)\,d\chi\,dt  =\int_0^\infty
    \left(\lambda\int_0^\infty e^{-\lambda s}F(t+s)\,ds-F(t)\right)dt  \\
 &=\int_0^\infty F(u)(1-e^{-\lambda u})\,d u
   -\int_0^\infty F(u)\,du  =-\int_0^\infty e^{-\lambda u}F(u)\,du
  =-\int_E R_\lambda h\,d\chi.
\end{align*}
If $[U\chi]=0$ in $\mathcal Q_{\rm RI}$, then $\mathfrak r(U\chi)=0$, hence $\chi(R_\lambda h)=0$ for all $\lambda>0$ and $h\in C_0(E)$.  Since the semigroup is strongly continuous on $C_0(E)$,
\[
 \lambda R_\lambda h
 =\int_0^\infty e^{-s}P_{s/\lambda}h\,ds
 \longrightarrow h
 \quad\text{in }C_0(E)
\]
as $\lambda\to\infty$.  Therefore $\chi(h)=0$ for every $h\in C_0(E)$, and the finite Radon measure $\chi$ is zero.  Thus $\chi\ne0$ implies $[U\chi]\ne0$.
\end{proof}

Suppose that $R$ is nonsingular and that $R_{AA}$ is singular for some nonempty proper subset $A\subsetneq J$.  Put $T=J\setminus A$ and choose $0\ne v\in\ker R_{AA},$ where $w=R_{TA}v.$ Since $R$ is nonsingular, $w\ne0$.  On $S_A$, the local boundary symbol is
\begin{equation}\label{eq:unified-local-symbol}
 \sum_{i\in A}v_iD_i f
 =(R_{AA}v)\cdot\nabla_A f+(R_{TA}v)\cdot\nabla_T f.
\end{equation}
Because $R_{AA}v=0$, only the tangential derivative $w\cdot\nabla_T f$ remains.  \Cref{prop:singular-block-gauge} turns this symbol calculation into a boundary source: for a compactly supported smooth density $\phi$ on the open stratum with $w\cdot\nabla_T\phi\not\equiv0$, it constructs boundary measures $\zeta_i$ and a finite nonzero centered measure $\chi$ supported on $S_A$ such that $\partial_R\zeta=\chi.$ Under the recurrence and regulator hypotheses of \cref{prop:zero-potential-correction}, \cref{prop:boundary-source-RI-defect} gives $(U\chi,\Theta\chi+\zeta)\in\ker\mathcal A$ and places its interior coordinate into the RI-defect quotient as $0\ne[U\chi]\in\mathcal Q_{\rm RI}.$ Thus signed BAR uniqueness fails.  In fact, the singular-block construction does more than produce a BAR tuple with zero total interior mass: it produces a concrete nonzero resolvent-insertion defect, $\mathfrak r(U\chi)(\lambda,h)
 =-\chi(R_\lambda h).$ Therefore a global resolvent insertion theorem cannot hold in this singular-block regime once the zero-potential lift is available.

\section*{Acknowledgment}

The authors would like to thank Jose Blanchet for bringing this open problem to our attention and encouraging us to pursue an AI-based solution. We are also grateful to Jose Blanchet, Yufan Chen and Wenhao Yang for their valuable feedback on this manuscript. The authors are also grateful to Bin Dong, Xiao Ma, Jiajin Li and Jianfeng Lu for their insightful discussions regarding the application of AI in mathematical proving and the formulation of our AI usage disclosure.

\begin{appendix}

\section{A stable example with \texorpdfstring{$g=R_\lambda h\notin C^2(E)$}{g=Rlambda h not in C2(E)}}\label{app:resolvent-not-C2}

This appendix gives a concrete example in which the probabilistic resolvent \(g=R_\lambda h\) is not in \(C^2(E)\).  The point is that interior smoothness and one-sided oblique flatness on open faces do not guarantee closed-domain \(C^2\) regularity at a corner.  We choose a smooth nonnegative source \(h\in C_c^\infty(E^\circ)\) such that \(h\not\equiv0\) but \(h(0)=0\). 

We first justify that \(g(0)>0\). Since \(h\ge0\), \(h\not\equiv0\), and
\(\operatorname{supp}h\subset E^\circ\), choose \(z_\ast\in E^\circ\), \(r>0\), and
\(c_h>0\) such that \(\overline{B(z_\ast,r)}\subset E^\circ\) and
\(h\ge c_h\) on \(\overline{B(z_\ast,r)}\). Let \(\Gamma\) be the Harrison--Reiman
Skorokhod map. On \([0,2]\), \(\Gamma\) is Lipschitz with constant \(K_\Gamma\).
Define
\[
        \gamma(t)=tz_\ast,\quad 0\le t\le1,
        \qquad
        \gamma(t)=z_\ast,\quad 1\le t\le2.
\]
Since \(\gamma\) stays in \(E\), \(\Gamma(\gamma)=\gamma\). For the SRBM started from
zero, the free input is \(X(t)=\mu t+W(t)\). Put \(b(t)=\gamma(t)-\mu t\). Since
Brownian motion has full support in \(C_0([0,2],\mathbb R^3)\), the event $A=\left\{
        \sup_{0\le t\le2}|W(t)-b(t)|<r/K_\Gamma
        \right\}$ has positive probability. On \(A\), the free input \(X\) is within \(r/K_\Gamma\) of
\(\gamma\), and therefore the reflected path \(Z^0=\Gamma(X)\) is within \(r\) of
\(\gamma\). Since \(\gamma(t)=z_\ast\) for \(1\le t\le2\), we have
\(Z^0(t)\in B(z_\ast,r)\) throughout \([1,2]\) on \(A\). Hence $g(0)\ge
        \mathbb P(A)c_h\int_1^2 e^{-\lambda t}\,dt
        >
        0 .$

On the other hand, if \(g\) were \(C^2\) up to the corner and satisfied the exact face conditions, those conditions would force \(\nabla g(0)=0\) and \(D^2g(0)=0\).  The resolvent equation at the corner would then give \(0=h(0)=(\lambda-L)g(0)=\lambda g(0)\), contradicting \(g(0)>0\).

Consider $d=3$, $\Sigma=I_3$, and
\[
R=
\begin{pmatrix}
1&0&-\frac12\\
-\frac12&1&0\\
0&-\frac12&1
\end{pmatrix},
\qquad
R^{-1}=\frac17
\begin{pmatrix}
8&2&4\\
4&8&2\\
2&4&8
\end{pmatrix}\ge0 .
\]
Thus $R$ is a nonsingular $M$-matrix.  Let $\mu=-R\mathbf 1$; then $R^{-1}\mu=-\mathbf 1<0$, so the data are stable in the sense of \eqref{eq:stability}.

Choose $h\in C_c^\infty(E^\circ)$ with $h\ge0$ and $h\not\equiv0$, and set $g=R_\lambda h$ for some $\lambda>0$.  Since the Brownian input has full support on compact time intervals and the Harrison--Reiman Skorokhod map is continuous, the SRBM started from the origin has positive probability of entering a ball on which $h>0$ and then remaining there for a nonzero time interval.  Hence $g(0)=\mathbb E_0\int_0^\infty e^{-\lambda t}h(Z_t)\,dt>0$.  Also $h(0)=0$, because $h$ is supported in the interior.

Assume, for contradiction, that $g\in C^2(E)$ in the closed-domain sense.  On each open face $F_i^\circ$, the one-sided derivative identity of \cref{prop:resolvent-directional} applies in the feasible direction $R_i$.  Since $\mathsf L_xR_i=0$ for $x\in F_i^\circ$, it gives $D_i g=0$ on $F_i^\circ$.  By continuity of the first derivatives, these identities extend to the origin as $R^T\nabla g(0)=0$.  Since $R$ is invertible, $\nabla g(0)=0$.

Let $H=D^2g(0)$.  For $j\ne i$, differentiating $D_i g=R_i\cdot\nabla g=0$ in the tangential direction $e_j$ along $F_i^\circ$ and then letting the tangential point tend to the origin gives $R_i^THe_j=0$.  Equivalently, $\operatorname{offdiag}(R^TH)=0$.  Thus $R^TH=\diag(d_1,d_2,d_3)$ for some real numbers $d_1,d_2,d_3$, and therefore
\[
        H=R^{-T}\diag(d_1,d_2,d_3)
        =\frac17
        \begin{pmatrix}
        8d_1&4d_2&2d_3\\
        2d_1&8d_2&4d_3\\
        4d_1&2d_2&8d_3
        \end{pmatrix}.
\]
Since $H$ is symmetric, comparison of the $(1,2)$, $(2,3)$, and $(1,3)$ entries gives $d_1=2d_2$, $d_2=2d_3$, and $d_3=2d_1$.  Hence $d_1=d_2=d_3=0$, so $H=0$.

The interior resolvent equation gives $(\lambda-L)g=h$ on $E^\circ$.  If $g\in C^2(E)$, the left-hand side extends continuously to the origin.  Since $h(0)=0$, $\nabla g(0)=0$, $D^2g(0)=0$, and $\Sigma=I_3$, this gives $0=h(0)=(\lambda-L)g(0)=\lambda g(0)$, contradicting $g(0)>0$.  Consequently, for this stable Harrison--Reiman SRBM and this smooth compactly supported interior source, $R_\lambda h\notin C^2(E)$.
\end{appendix}

\bibliographystyle{imsart-number}
\bibliography{references}
\end{document}